\newtheorem{lemma}{Lemma}[section]
\newtheorem{theorem}[lemma]{Theorem}
\newtheorem{corollary}[lemma]{Corollary}
\providecommand{\N}{{\ensuremath{\mathbbm{N}}}}
\providecommand{\R}{{\ensuremath{\mathbbm{R}}}}
\providecommand{\E}{{\ensuremath{\mathbb{E}}}}
\renewcommand{\P}{{\ensuremath{\mathbb{P}}}}
\providecommand{\1}{{\ensuremath{\mathbbm{1}}}}
\newcommand{\diffns}[1]{d#1}
\providecommand{\Cb}[1]{{C_b^{#1}}}
\providecommand{\Cp}[2]{{C^{#1}_{#2}}}
\newcommand{\trans}{\phi}
\newcommand{\transmol}[1]{\phi_{#1}}
\newcommand{\lpn}[3]{\mathcal{L}^{#1}(#2;#3)}
\newcommand{\lpnb}[3]{L^{#1}(#2;#3)}
\newcommand{\smallsum}{\textstyle\sum}
\newcommand{\smallprod}{\textstyle\prod}
\newcommand{\deltaset}[1]{\mathbb{D}_{#1}}
\newcommand{\nzspace}[1]{{#1}\setminus\{0\}}
\newcommand{\supertheta}{\Upsilon}
\title{Regularity properties for solutions of infinite dimensional Kolmogorov equations in Hilbert spaces}
\author{Adam Andersson, Mario Hefter, Arnulf Jentzen, and Ryan Kurniawan}
\begin{document}

\maketitle

\begin{abstract}
In this article we establish regularity properties for solutions of infinite dimensional Kolmogorov equations.
We prove that if the nonlinear drift coefficients, the nonlinear diffusion coefficients, and the initial conditions of the considered Kolmogorov equations are $n$-times continuously Fr\'{e}chet differentiable, then so are the generalized solutions at every positive time. In addition, a key contribution of this work is to prove suitable enhanced regularity properties for the derivatives of the generalized solutions of the Kolmogorov equations in the sense that the dominating linear operator in the drift coefficient of the Kolmogorov equation regularizes the higher order derivatives of the solutions. Such enhanced regularity properties are of major importance for establishing weak convergence rates for spatial and temporal numerical approximations of stochastic partial differential equations. 
\end{abstract}

\tableofcontents

\section{Introduction}

In this article we establish regularity properties for solutions of infinite dimensional Kolmogorov equations.
Infinite dimensional Kolmogorov equations are the Kolmogorov equations associated to stochastic partial differential equations (SPDEs) and such equations have been intensively studied in the literature in the last three decades
(cf., e.g., 
Ma \& R{\"o}ckner~\cite{MaRoeckner1992}, 
R{\"o}ckner~\cite{Roeckner1999}, 
Zabczyk~\cite{Zabczyk1999}, 
Cerrai~\cite{c01}, 
Da Prato \& Zabczyk~\cite{dz02b}, 
R{\"o}ckner \& Sobol~\cite{RoecknerSobol2004}, 
Da Prato~\cite{DaPrato2004}, 
R{\"o}ckner~\cite{Roeckner2005}, 
R{\"o}ckner \& Sobol~\cite{RoecknerSobol2006}, 
R{\"o}ckner \& Sobol~\cite{RoecknerSobol2007}, 
Da Prato~\cite{DaPrato2007}, 
and the references mentioned therein).
In Theorem~\ref{thm:intro} below we summarize some of the main findings of this paper.
In our formulation of Theorem~\ref{thm:intro} we employ the following notation.
For every $ n \in \N=\{1,2,\ldots,\} $ and every non-trivial
$ \R $-Banach space
$ ( V , \left\| \cdot \right\|_V ) $
we denote by 
$\Cb{n}(V,\R)$ 
the set of all $n$-times continuously Fr\'{e}chet differentiable functions $f\colon V \to \R$ with globally bounded derivatives,
we denote by
$
  \left\|\cdot\right\|_{\Cb{n}( V, \R )}
$
the associated norm on $\Cb{n}(V,\R)$
(cf.~\eqref{eq:Cb.def} below),
we denote by 
$
  \operatorname{Lip}^n( V, \R )
$ 
the set of all functions $f\colon V \to \R$
in $\Cb{n}(V,\R)$ 
which have globally Lipschitz continuous derivatives, 
and we denote by 
$\left|\cdot\right|_{\operatorname{Lip}^n( V, \R )}$ 
an associated semi-norm on $\operatorname{Lip}^n( V, \R )$ 
(cf.~\eqref{eq:Lip.def} below). 
\begin{theorem}
\label{thm:intro}
Let
$
  \left(
  H,
  \left\| \cdot \right\|_H,
  \left< \cdot, \cdot \right>_H
  \right)
$
and 
$
  \left(
  U,
  \left\| \cdot \right\|_U ,
  \left< \cdot, \cdot \right>_U
  \right)
$ 
be non-trivial separable $ \R $-Hilbert spaces, 
let $\mathbb{U}\subseteq U$ be an orthonormal basis of $U$, 
let
$
A \colon D(A)
\subseteq
H \rightarrow H
$
be a generator of a strongly continuous analytic semigroup,
and let 
$ T \in (0,\infty) $, $ n \in \N $, 
$F\in\Cb{n}(H,H)$, $B\in\Cb{n}(H,HS(U,H))$. 
Then 
\begin{enumerate}[(i)]
\item
it holds that there exist unique functions 
$ P_t \colon \Cb{1}(H,\R) \to \Cp{}{}(H,\R) $, 
$ t \in [0,T] $, 
such that for every 
$ \varphi \in \Cb{1}(H,\R) $ 
it holds that $(P_t \varphi)(x) \in \R$, $ (t,x) \in [0,T] \times H $, is a generalized solution of
\begin{equation}
\label{eq:2nd.order.PDE}
\begin{split}
  \tfrac{\partial}{\partial t} (P_t \varphi)(x)
=
  \tfrac{1}{2}
  \smallsum\limits_{u\in\mathbb{U}}
  (P_t \varphi)''(x)
  ( B(x)u, B(x)u )
  +
  (P_t \varphi)'(x)
  [Ax + F(x)]
\end{split}
\end{equation}
for 
$ (t,x) \in (0,T] \times D(A) $ 
with 
$
  (P_0 \varphi)(x)=\varphi(x)
$
for $ x \in H $
(cf., e.g., \cite[page~127]{dz02b}),
\item 
\label{item:intro.Cb}
it holds for all 
$ k \in \{1,\ldots,n\} $, 
$t\in[0,T]$
that 
$
  P_t(\Cb{k}(H,\R))
  \subseteq \Cb{k}(H,\R)
$, 
\item 
\label{item:intro.regularity}
it holds for all 
$ k \in \{1,\ldots,n\} $, 
$t\in[0,T]$
with
$
|F|_{\operatorname{Lip}^k(H,H)}
+
|B|_{\operatorname{Lip}^k(H,HS(U,H))}
< \infty
$ 
that 
$
P_t(\operatorname{Lip}^k(H,\R))
\subseteq \operatorname{Lip}^k(H,\R)
$, 
\item
\label{item:intro.trans.apriori}
it holds for all 
$ k \in \{1,\ldots,n\} $,
$
\delta_1,\dots,\delta_k\in[0,\nicefrac{1}{2})
$
with 
$
\sum_{i=1}^k\delta_i
< \nicefrac{1}{2}
$ 
that 
\begin{equation}
\label{eq:intro.trans.apriori}
\begin{split}
&
\sup_{ \varphi \in \nzspace{\Cb{k}(H,\R)} }
\sup_{ x \in H }
\sup_{
	u_1, \dots, u_k \in 
	\nzspace{H}
}
\sup_{
	t \in (0,T]
}
\Bigg[
\frac{
	t^{\sum^k_{i=1} \delta_i}
	\,
	|
	(P_t \varphi)^{(k)}(x)
	(u_1,\ldots,u_k)
	|
}{
\|\varphi\|_{\Cb{k}(H,\R)}
\prod^k_{ i=1 }
\|
u_i
\|_{
	H_{ - \delta_i }
}
}
\Bigg]
<\infty,
\end{split}
\end{equation}
and 
\item
\label{item:intro.Lip}
it holds for all 
$ k \in \{1,\ldots,n\} $,
$
\delta_1, \ldots, \delta_k \in
[0,\nicefrac{1}{2})
$
with 
$
\sum_{i=1}^k\delta_i
< \nicefrac{1}{2}
$ 
and 
$
|F|_{\operatorname{Lip}^k(H,H)}
+
|B|_{\operatorname{Lip}^k(H,HS(U,H))}
< \infty
$ 
that 
\begin{equation}
\label{eq:intro.trans.lip}
\begin{split}
&
\sup_{ \varphi \in \nzspace{\operatorname{Lip}^k(H,\R)} }
\sup_{\substack{x, y \in H, \\ x \neq y}}
\sup_{\substack{ u_1,\ldots,u_k \\ \in \nzspace{H} }}
\sup_{t \in (0,T] }
\left[
\frac{
	t^{\sum^k_{i=1} \delta_i}
	|[
	(P_t \varphi)^{(k)}(x)
	-
	(P_t \varphi)^{(k)}(y)
	]
	(u_1,\ldots,u_k)	
	|
}{
\|\varphi\|_{\operatorname{Lip}^k(H,\R)} \,
\|v-w\|_H
\prod^k_{i=1} \|u_i\|_{H_{-\delta_i}}
}
\right]
\\&< \infty.
\end{split}
\end{equation}
\end{enumerate}
\end{theorem}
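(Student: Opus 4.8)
The plan is to represent the solution operators probabilistically and to reduce all five assertions to a priori estimates on the Fr\'echet derivatives of the underlying stochastic evolution equation. Fix a filtered probability space carrying a cylindrical Wiener process $W$ on $U$, let $X^x=(X^x_t)_{t\in[0,T]}$ be the (up to modifications unique) mild solution of
\[
  X^x_t = e^{tA}x + \int_0^t e^{(t-s)A}F(X^x_s)\,ds + \int_0^t e^{(t-s)A}B(X^x_s)\,dW_s ,
  \qquad x\in H,
\]
and set $(P_t\varphi)(x):=\E[\varphi(X^x_t)]$. Standard well-posedness and moment bounds give $\sup_{t\in[0,T]}\|X^x_t\|_{L^p(\Omega;H)}\le C(1+\|x\|_H)$ for all $p\in[2,\infty)$, and item~(i) follows from the classical link between such SPDEs and their Kolmogorov equations (cf., e.g., \cite{dz02b}): $P_t$ is well defined on $\Cb{1}(H,\R)$, the representation is forced by uniqueness, and applying It\^o's formula to $t\mapsto\varphi(X^x_t)$ shows that $(t,x)\mapsto(P_t\varphi)(x)$ is a generalized solution of \eqref{eq:2nd.order.PDE} with the stated initial value.

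Next I would analyse $x\mapsto X^x_t$. By induction on $k\in\{1,\dots,n\}$ one shows that $H\ni x\mapsto X^x_t\in L^p(\Omega;H)$ is $k$-times continuously Fr\'echet differentiable and that the $k$-th derivative process $V^{k,x}_t(u_1,\dots,u_k):=\big(\tfrac{\partial^k}{\partial x^k}X^x_t\big)(u_1,\dots,u_k)$ solves a linear mild equation
\[
  V^{k,x}_t(u) = \Phi^{k,x}_t(u)
  + \int_0^t e^{(t-s)A}F'(X^x_s)\,V^{k,x}_s(u)\,ds
  + \int_0^t e^{(t-s)A}B'(X^x_s)\,V^{k,x}_s(u)\,dW_s ,
\]
where $\Phi^{1,x}_t(u_1)=e^{tA}u_1$ and, for $k\ge2$, $\Phi^{k,x}_t(u)$ is a finite sum indexed by the partitions of $\{1,\dots,k\}$ into at least two blocks of stochastic convolutions of $e^{(t-\cdot)A}$ against $F^{(m)}(X^x_\cdot)$ or $B^{(m)}(X^x_\cdot)$ evaluated at tuples of the lower-order processes $V^{j,x}_\cdot(\cdot)$. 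By Fa\`a di Bruno's formula the representation $(P_t\varphi)(x)=\E[\varphi(X^x_t)]$ then gives, once the interchange of differentiation and expectation is justified by the bounds below,
\[
  (P_t\varphi)^{(k)}(x)(u_1,\dots,u_k)
  = \E\Bigg[\sum_{\pi}\varphi^{(|\pi|)}(X^x_t)
    \Big(V^{|\pi_1|,x}_t(u_{\pi_1}),\dots,V^{|\pi_{|\pi|}|,x}_t(u_{\pi_{|\pi|}})\Big)\Bigg],
\]
where $\pi=\{\pi_1,\dots,\pi_{|\pi|}\}$ runs over the partitions of $\{1,\dots,k\}$ and $u_{\pi_\ell}$ denotes the subtuple of $u$ indexed by $\pi_\ell$.

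Everything thus reduces to the smoothing estimate
\[
  \big\|V^{k,x}_t(u_1,\dots,u_k)\big\|_{L^p(\Omega;H)}
  \le C\, t^{-\sum_{i=1}^k\delta_i}\textstyle\prod_{i=1}^k\|u_i\|_{H_{-\delta_i}}
\]
for all $\delta_1,\dots,\delta_k\in[0,\nicefrac{1}{2})$ with $\sum_i\delta_i<\nicefrac{1}{2}$ and all $p\in[2,\infty)$, which I would prove by induction on $k$. For $k=1$ this is the classical regularizing estimate for the linearized equation: bound the leading term by $\|e^{sA}u_1\|_H\le Cs^{-\delta_1}\|u_1\|_{H_{-\delta_1}}$, the deterministic convolution by $\int_0^t\|V^{1,x}_s(u_1)\|_{L^p}\,ds$, and the stochastic convolution via a Burkholder/factorization argument by $(\int_0^t s^{-2\delta_1}\,\|V^{1,x}_s(u_1)\|^2_{L^p}s^{2\delta_1}\,ds)^{1/2}$-type quantities, then close with a singular (Henry-type) Gronwall lemma in the weighted norm $\sup_{s\in(0,t]}s^{\delta_1}\|V^{1,x}_s(u_1)\|_{L^p}$; here $\delta_1<\nicefrac{1}{2}$ is precisely the integrability threshold imposed by the square in the It\^o isometry. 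In the inductive step the inhomogeneity $\Phi^{k,x}_t(u)$ is estimated block-by-block: applying the induction hypothesis to each factor $V^{|\pi_\ell|,x}_s(u_{\pi_\ell})$ (legitimate since $|\pi_\ell|<k$ and $\sum_{i\in\pi_\ell}\delta_i\le\sum_i\delta_i<\nicefrac{1}{2}$) and generalized H\"older in $\omega$, a block $\pi_\ell$ contributes $s^{-\sum_{i\in\pi_\ell}\delta_i}$, so the product over all blocks of a partition produces exactly $s^{-\sum_{i=1}^k\delta_i}$, still integrable against the (stochastic) convolution kernel because $\sum_i\delta_i<\nicefrac{1}{2}$; the same singular Gronwall argument transfers the bound from $\Phi^{k,x}_t$ to $V^{k,x}_t$. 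Combined with the Fa\`a di Bruno formula, the uniform bound on $\varphi^{(j)}$ for $\varphi\in\Cb{k}(H,\R)$, and H\"older's inequality, this yields \eqref{eq:intro.trans.apriori}; the special case $\delta_i=0$ together with continuity of $x\mapsto V^{k,x}_t$ in $L^p(\Omega;H)$ and dominated convergence gives item~(ii). For items~(iii) and~(v) I would run the same induction on the difference processes $V^{k,x}_t(u)-V^{k,y}_t(u)$, using the Lipschitz hypotheses on the derivatives of $F$ and $B$ and the estimate $\|X^x_t-X^y_t\|_{L^p(\Omega;H)}\le C\|x-y\|_H$; the bookkeeping is as before with one additional block carrying the factor $\|x-y\|_H$.

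The step I expect to be the main obstacle is exactly this inductive bookkeeping: organizing the partition sums so that the smoothing exponents add up correctly, checking that in every resulting term the total exponent stays below $\nicefrac{1}{2}$ (which uses $\sum_{i\in\pi_\ell}\delta_i\le\sum_i\delta_i<\nicefrac{1}{2}$ for each block), and choosing weighted function spaces in which the singular Gronwall / fixed-point argument closes simultaneously for all the inhomogeneities. The Lipschitz-in-$x$ estimates add a further differencing layer on top of this and are correspondingly the most delicate part of the argument.
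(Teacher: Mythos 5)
Your proposal follows essentially the same route as the paper: represent $P_t\varphi$ via the mild solution, derive the partition-indexed (Fa\`a di Bruno) formula for $(P_t\varphi)^{(k)}$ from the derivative processes of the flow, and transfer the $t^{-\sum_i\delta_i}$-smoothing and Lipschitz estimates on those processes to the semigroup derivatives by H\"older's inequality. The only notable difference is that the paper does not prove the singular-Gronwall smoothing estimates you sketch as the ``main obstacle''--- it imports them (together with existence, continuity, and differentiability of the derivative processes) from Theorem~2.1 of the companion paper \cite{AnderssonJentzenKurniawan2016a}, and instead spends its effort on the rigorous interchange of differentiation and expectation (Lemmas~\ref{lem:stoch.chain.rule}, \ref{lem:partial.diff}, and~\ref{lem:test.derivative}) that you justify only in passing.
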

In the case $n=2$, item~\eqref{item:intro.Cb} in Theorem~\ref{thm:intro} is a generalization of Theorem~6.7 in Zabczyk~\cite{Zabczyk1999}
and Theorem~7.4.3 in Da Prato \& Zabczyk~\cite{dz02b} 
(in this paper $\Cb{2}$-functions do not necessarily need to be globally bounded; compare the sentence above Lemma~3.4 in~\cite{Zabczyk1999} and item~(ii) on page~31 in~\cite{dz02b} with~\eqref{eq:Cb.def} in this paper).
Theorem~\ref{thm:intro} is a straightforward consequence of Theorem~\ref{thm:derivative_formulas} in Section~\ref{sec:regularity_extended_transition_semigroup} below.
In Theorem~\ref{thm:derivative_formulas} below we also specify 
for every natural number $n\in\N$ 
and every $t\in[0,T]$ an explicit formula for the $n$-th
derivative of the generalized solution 
$
  H \ni x \mapsto
  ( P_t \varphi )(x) \in \R
$ 
of~\eqref{eq:2nd.order.PDE} at time $t\in[0,T]$.
Moreover, Theorem~\ref{thm:derivative_formulas} below provides explicit bounds for the left hand sides of~\eqref{eq:intro.trans.apriori} and~\eqref{eq:intro.trans.lip} (see items~\eqref{item:thm2.derivative.bound} and~\eqref{item:thm.trans.lip} in Theorem~\ref{thm:derivative_formulas} below).
Next we would like to emphasize that Theorem~\ref{thm:intro} and Theorem~\ref{thm:derivative_formulas}, respectively, prove finiteness of~\eqref{eq:intro.trans.apriori} and~\eqref{eq:intro.trans.lip} even though the denominators in~\eqref{eq:intro.trans.apriori} and~\eqref{eq:intro.trans.lip} contain rather weak norms from negative Sobolev-type spaces for the multilinear arguments of the derivatives of the generalized solution.
In particular, item~\eqref{item:intro.trans.apriori} in Theorem~\ref{thm:intro} above 
and item~\eqref{item:thm2.derivative.bound} in Theorem~\ref{thm:derivative_formulas} below, respectively, 
reveal for every
$ p \in [1,\infty) $, 
$ k \in \{1,2,\ldots,n\} $, 
$ \delta_1, \delta_2, \ldots, \delta_k \in [0,\nicefrac{1}{2}) $, 
$ x \in H $,
$ t \in (0,T] $
that the $k$-th derivative
$
	(P_t \varphi)^{(k)}(x)
$
even takes values in the continuously embedded subspace 
\begin{equation}
\label{eq:rough.space}
L( \otimes^k_{i=1} H_{-\delta_i}, \R )
\end{equation} 
of 
$
L( H^{\otimes k}, \R )
$
provided that the hypothesis 
\begin{equation}
\label{eq:condition.delta}
\smallsum^k_{i=1} \delta_i
< \nicefrac{1}{2}
\end{equation}
is satisfied.
In addition, we employ items~\eqref{item:intro.trans.apriori}--\eqref{item:intro.Lip} in Theorem~\ref{thm:intro} above and items~\eqref{item:thm2.derivative.bound} and~\eqref{item:thm.trans.lip} in Theorem~\ref{thm:derivative_formulas} below, respectively, to establish similar a priori bounds as~\eqref{eq:intro.trans.apriori}--\eqref{eq:intro.trans.lip} for a family of appropriately mollified solutions of~\eqref{eq:2nd.order.PDE} which hold uniformly in the mollification parameter; 
see items~\eqref{item:cor.mollified.transition.bound}--\eqref{item:cor.mollified.transition.lip} in Corollary~\ref{thm:mollified.transition} below for details.
Items~\eqref{item:intro.trans.apriori}--\eqref{item:intro.Lip} in Theorem~\ref{thm:intro} above, 
items~\eqref{item:thm2.derivative.bound} and~\eqref{item:thm.trans.lip} in Theorem~\ref{thm:derivative_formulas} below,
and, especially, items~\eqref{item:cor.mollified.transition.bound}--\eqref{item:cor.mollified.transition.lip} in Corollary~\ref{thm:mollified.transition} below, respectively, are of major importance for establishing essentially sharp probabilistically \emph{weak convergence rates} for numerical approximation processes
as the analytically weak norms for the multilinear arguments of the derivatives of the generalized solution (cf.\ the denominators in~\eqref{eq:intro.trans.apriori} and~\eqref{eq:intro.trans.lip} above) translate in analytically weak norms for the approximation errors in the probabilistically weak error analysis which, in turn, result in essentially sharp probabilistically weak convergence rates for the numerical approximation processes
(cf., e.g., 
Theorem~2.2 in Debussche~\cite{Debussche2011},
Theorem~2.1 in Wang \& Gan~\cite{WangGan2013_Weak_convergence}, 
Theorem~1.1 in Andersson \& Larsson~\cite{AnderssonLarsson2015},
Theorem~1.1 in Br\'{e}hier~\cite{Brehier2014},
Theorem~5.1 in Br\'{e}hier \& Kopec~\cite{BrehierKopec2016},
Corollary~1 in Wang~\cite{Wang2016481},
Corollary~5.2 in Conus et al.~\cite{ConusJentzenKurniawan2014arXiv}, 
Theorem~6.1 in Kopec~\cite{Kopec2014_PhD_Thesis},
and
Corollary~8.2 in~\cite{JentzenKurniawan2015arXiv}).

\subsection{Notation}
\label{sec:notation}

In this section we introduce some of the notation which we employ throughout the article
(cf., e.g., \cite[Section~1.1]{AnderssonJentzenKurniawan2016arXiv}).
For two sets $ A $ and $ B $ we denote by 
$ \mathbb{M}(A,B) $ 
the set of all mappings from $A$ to $B$.
For two
measurable spaces
$
  ( A, \mathcal{A} )
$
and
$
  ( B, \mathcal{B} )
$
we denote by
$
  \mathcal{M}( \mathcal{A}, \mathcal{B} )
$
the set of
$ \mathcal{A} $/$ \mathcal{B} $-measurable
functions.
For a set $ A $ 
we denote by 
$ \mathcal{P}(A) $ the power set of $ A $
and we denote by 
$ \#_A \in \N_0 \cup \{\infty\} $ 
the number of elements of $ A $.
For a Borel measurable set $ A \in \mathcal{B}(\R) $ 
we denote by $ \mu_A \colon \mathcal{B}(A) \to [0,\infty] $ 
the Lebesgue-Borel measure on $A$. 
We denote by 
$ \lfloor \cdot \rfloor \colon \R \to \R $
and 
$ \lceil \cdot \rceil \colon \R \to \R $
the functions which satisfy for all
$ t \in \R $
that
$
  \lfloor t \rfloor =
  \max\!\left(
    ( -\infty, t ]
    \cap
    \{ 0, 1 , - 1 , 2 , - 2 , \dots \}
  \right)
$
and 
$
  \lceil t \rceil =
  \min\!\left(
    [ t, \infty )
    \cap
    \{ 0, 1 , - 1 , 2 , - 2 , \dots \}
  \right)
$. 
For $ \R $-Banach spaces
$ ( V , \left\| \cdot \right\|_V ) $
and
$ ( W , \left\| \cdot \right\|_W ) $
with 
$ \#_V > 1 $
and a natural number $ n \in \N $
we denote by
$
  \left| \cdot \right|_{ \Cb{n}( V, W ) }
  \colon
  \Cp{n}{}( V, W ) \to [0,\infty]
$
and
$
  \left\| \cdot \right\|_{ \Cb{n}( V, W ) }
  \colon
  \Cp{n}{}( V, W ) \to [0,\infty]
$
the functions which satisfy
for all $ f \in \Cp{n}{}( V, W ) $
that
\begin{equation}
\label{eq:Cb.def}
\begin{split}
  \left| f \right|_{
    \Cb{n}( V, W )
  }
& =
  \sup\nolimits_{
    x \in V
  }
  \left\|
    f^{ (n) }( x )
  \right\|_{
    L^{ (n) }( V, W )
  }
  ,
\qquad
  \left\| f \right\|_{
    \Cb{n}( V, W )
  }
  =
  \|f(0)\|_W
  +
  \smallsum_{ k = 1 }^n
  \left| f \right|_{ \Cb{k}(V,W) }
\end{split}
\end{equation}
and we denote by
$
  \Cb{n}( V, W )
$
the set given by
$
  \Cb{n}( V, W ) =
  \{ f \in \Cp{n}{}( V, W ) \colon \left\| f \right\|_{ \Cb{n}( V, W ) } < \infty \}
$.
For $ \R $-Banach spaces
$ ( V , \left\| \cdot \right\|_V ) $
and
$ ( W , \left\| \cdot \right\|_W ) $
with
$ \#_V > 1 $
and a nonnegative integer $ n \in \N_0 $
we denote by
$
  \left| \cdot \right|_{
    \operatorname{Lip}^n( V, W )
  }
  \colon 
  \Cp{n}{}( V, W )
  \to [0,\infty]
$
and 
$
  \left\| \cdot \right\|_{
    \operatorname{Lip}^n( V, W )
  }
  \colon 
  \Cp{n}{}( V, W )
  \to [0,\infty]
$
the functions which satisfy
for all $ f \in \Cp{n}{}( V, W ) $
that
\begin{equation}
\label{eq:Lip.def}
\begin{split}
  \left| f \right|_{ 
    \operatorname{Lip}^n( V, W )
  }
  &=
\begin{cases}
  \sup_{ 
    \substack{
      x, y \in V ,\,
      x \neq y
    }
  }
  \left(
  \frac{
    \left\| f( x ) - f( y ) \right\|_W
  }{
    \left\| x - y \right\|_V
  }
  \right)
&
  \colon
  n = 0
\\
  \sup_{ 
    \substack{
      x, y \in V ,\,
      x \neq y
    }
  }
  \left(
  \frac{
    \| f^{ (n) }( x ) - f^{ (n) }( y ) \|_{ L^{ (n) }( V, W ) }
  }{
    \left\| x - y \right\|_V
  }
  \right)
&
  \colon
  n \in \N
\end{cases}
  ,
\\
  \left\| f \right\|_{
    \operatorname{Lip}^n( V, W )
  }
  &
  =
  \|f(0)\|_W
  +
  \smallsum_{ k = 0 }^n
  \left| f \right|_{ \operatorname{Lip}^k(V,W) }
\end{split}
\end{equation}
and we denote by
$
  \operatorname{Lip}^n( V, W )
$
the set given by
$
  \operatorname{Lip}^n( V, W ) =
  \{ f \in \Cp{n}{}( V, W ) \colon \left\| f \right\|_{ \operatorname{Lip}^n( V, W ) } < \infty \}
$.
We denote by  
$ \Pi_k, \Pi^*_k \in 
\mathcal{P}\big(\mathcal{P}\big(
\mathcal{P}( \N )
\big)\big)
$, 
$ k \in \N_0 $,
the sets which satisfy for all $ k \in \N $
that
$ \Pi_0 = \Pi_0^{ * } = \emptyset $,
$
\Pi_k^{ * } =
\Pi_k \backslash
\big\{ 
\{ \{ 1, 2, \dots, k \} \}
\big\}
$,
and
\begin{equation}
\Pi_k =
\big\{
A \subseteq \mathcal{P}( \N )
\colon
\left[
\emptyset \notin A
\right]
\wedge
\left[
\cup_{ a \in A }
a
=
\left\{ 1, 2, \dots, k \right\}
\right]
\wedge
\left[
\forall \, a, b \in A \colon
\left(
a \neq b
\Rightarrow
a \cap b = \emptyset
\right)
\right]
\big\}
\end{equation}
(see, e.g., (10) in Andersson et al.~\cite{AnderssonJentzenKurniawan2016a}).
For a natural number $ k \in \N $
and a set $ \varpi \in \Pi_k $
we denote by
$
  I^\varpi_1 , I^\varpi_2, \dots , I^\varpi_{ \#_\varpi }
  \in
  \varpi
$
the sets which satisfy that
$
  \min( I^\varpi_1 ) < 
  \min( I_2^\varpi ) < \dots < 
  \min( 
    I_{ \#_\varpi }^{ \varpi } 
  )
$.
For 
a natural number
$ k \in \N $,
a set
$ \varpi \in \Pi_k $,
and a natural number
$ i \in \{ 1, 2, \dots, \#_\varpi \} $
we denote by
$
  I_{ i, 1 }^\varpi ,
  I_{ i, 2 }^\varpi ,
  \dots,
  I_{ i, \#_{ I_i^{ \varpi } } }^\varpi
  \in
  I_i^{ \varpi }
$
the natural numbers which satisfy that
$
  I_{ i, 1 }^\varpi < I_{ i, 2 }^\varpi < \dots < I_{ i, \#_{ I_i^{ \varpi } } }^\varpi
$.
For a measure space $ ( \Omega , \mathcal{F}, \mu ) $,
a measurable space $ ( S , \mathcal{S} ) $,
a set $ R $, 
and a function
$ f \colon \Omega \to R $
we denote by
$
   \left[ f \right]_{
     \mu, \mathcal{S}
   }
$
the set given by
\begin{equation}
   \left[ f \right]_{
     \mu, \mathcal{S}
   }
   =
   \left\{
     g \in \mathcal{M}( \mathcal{F}, \mathcal{S} )
     \colon
     (
     \exists \, A \in \mathcal{F} \colon
     \mu(A) = 0 
     \text{ and }
     \{ \omega \in \Omega \colon f(\omega) \neq g(\omega) \}
     \subseteq A
     )
   \right\}
   .
\end{equation}

\subsection{Setting}
\label{sec:global_setting}

Throughout this article the following setting is frequently used.
Let
$ T \in (0,\infty) $,
$ \eta \in \R $,
let
$
  \left(
    H,
    \left\| \cdot \right\|_H,
    \left< \cdot, \cdot \right>_H
  \right)
$
and 
$
  \left(
    U,
    \left\| \cdot \right\|_U ,
    \left< \cdot, \cdot \right>_U
  \right)
$ 
be separable $ \R $-Hilbert spaces 
with $\#_H > 1$, 
let 
$
  \left(
    V,
    \left\| \cdot \right\|_V
  \right)
$ 
be a separable $ \R $-Banach space, 
let
$
( \Omega , \mathcal{F}, \P )
$
be a probability space with a normal filtration 
$
( \mathcal{F}_t )_{ t \in [0,T] }
$,
let
$
  ( W_t )_{ t \in [0,T] }
$
be an $ \operatorname{Id}_U $-cylindrical 
$ ( \Omega , \mathcal{F}, \P, ( \mathcal{F}_t )_{ t \in [0,T] } ) $-Wiener process,
let
$
  A \colon D(A)
  \subseteq
  H \rightarrow H
$
be a generator of a strongly continuous analytic semigroup
with 
$
  \operatorname{spectrum}( A )
  \subseteq
  \{
    z \in \mathbb{C}
    \colon
    \text{Re}( z ) < \eta
  \}
$,
let
$
  (
    H_r
    ,
    \left\| \cdot \right\|_{ H_r }
    ,
    \left< \cdot , \cdot \right>_{ H_r }
  )
$,
$ r \in \R $,
be a family interpolation spaces associated to
$
  \eta - A
$
(cf., e.g., \cite[Section~3.7]{sy02}),
for every $ k \in \N $,
$ \varpi \in \Pi_k $,
$
  i \in \{ 1, 2, \dots, \#_\varpi \} 
$
let 
$
  [ \cdot ]_i^\varpi
  \colon
  H^{ k + 1 }
  \to 
  H^{ 
    \#_{I_i^\varpi} + 1
  }
$
be the mapping which satisfies for all 
$
  \mathbf{u} = (u_0, u_1, \dots, u_k)
  \in 
  H^{ k + 1 }
$
that
$
  [ \mathbf{u} ]_i^\varpi
  = ( u_0, u_{ I_{ i, 1 }^\varpi } , u_{ I_{ i, 2 }^\varpi } , \dots , u_{ I_{ i, \#_{I_i^\varpi} }^\varpi } )
$, 
for every 
$ k \in \N $, 
$ \bm{\delta} = (\delta_1,\delta_2,\ldots,\delta_k) \in \R^k $, 
$ \alpha \in [0,1) $, 
$ \beta \in [0,\nicefrac{1}{2}) $, 
$ J \in \mathcal{P}(\R) $ 
let 
$
  \iota^{\bm{\delta},\alpha,\beta}_J \in \R
$
be the real number given by 
$
  \iota^{\bm{\delta},\alpha,\beta}_J
  =
  \sum_{i\in J\cap\{1,2,\ldots,k\}} \delta_i
  -
  \1_{[2,\infty)}(\#_{J\cap\{1,2,\ldots,k\}}) \,
  \min\{1-\alpha,\nicefrac{1}{2}-\beta\}
$, 
and for every separable $ \R $-Banach space $ ( J, \left\| \cdot \right\|_J ) $
and every 
$ a \in \R $, $ b \in (a,\infty) $, $ I \in \mathcal{B}( \R ) $,
$
  X \in \mathcal{M}( \mathcal{B}( I ) \otimes \mathcal{F} , \mathcal{B}( J ) )
$ 
with  
$ (a,b) \subseteq I $
let 
$
  \int_a^b X_s \, {\bf ds}
  \in 
  L^0( \P ; J )
$
be the set given by
$
  \int_a^b X_s \, {\bf ds}
  =
  \big[
    \int_a^b \mathbbm{1}_{ \{ \int_a^b \| X_u \|_J \, du < \infty \} } X_s \, ds
  \big]_{
    \P , \mathcal{B}( J )
  }
$.

\section{Some auxiliary results for the differentiation of random fields}
\begin{lemma}[A chain rule for random fields]
\label{lem:stoch.chain.rule}
Let 
$
  \left(
    U,
    \left\| \cdot \right\|_U
  \right)
$ 
be an $\R$-Banach space with $\#_U > 1$, 
let 
$
  \left(
    V,
    \left\| \cdot \right\|_{V}
  \right)
$  
and 
$
  \left(
    W,
    \left\| \cdot \right\|_W
  \right)
$ 
be separable $\R$-Banach spaces, 
let $ (\Omega,\mathcal{F},\P) $ be a probability space, 
let 
$ X^{k,\mathbf{u}} \in \cap_{ p \in [1,\infty) } \lpn{p}{\P}{V} $, $\mathbf{u} \in U^{k+1}$, 
$ k \in \{0,1\} $, satisfy for all 
$ p \in [1,\infty) $, 
$ x,u \in U $
that 
$ 
 \big( U \ni y \mapsto [X^{0,y}]_{\P,\mathcal{B}(V)} \in \lpnb{p}{\P}{V} \big) \in 
 \Cp{1}{}(U,\lpnb{p}{\P}{V}) 
$ 
and 
$
  \big( \frac{d}{dx} [X^{0,x}]_{\P,\mathcal{B}(V)} \big) u
  =
  [X^{1,(x,u)}]_{\P,\mathcal{B}(V)}
$,
and let $ \varphi \in \Cp{1}{}(V,W) $ 
satisfy that 
$
  \limsup_{ p \nearrow \infty }
  \sup_{ x \in V }
  \frac{
    \|\varphi'(x)\|_{L(V,W)}
  }{
    |\!\max\{1,\|x\|_{V}\}|^p
  }
  < \infty
$.
Then 
\begin{enumerate}[(i)]
\item
\label{item:stoch.derivative.integrability}
it holds for all $ x, u \in U $ that 
$
  \E[\|\varphi(X^{0,x})\|_W+\|\varphi'(X^{0,x})X^{1,(x,u)}\|_W]
  < \infty
$, 
\item
\label{item:stoch.derivative} 
it holds that 
$
  \big(
    U \ni x \mapsto \E[\varphi(X^{0,x})] \in W
  \big) 
  \in \Cp{1}{}(U,W)
$, 
and 
\item
\label{item:stoch.derivative.representation}
it holds for all $ x, u \in U $ that 
$
  \big(
  \frac{d}{dx}
  \E[\varphi(X^{0,x})]
  \big) u
  =
  \E[\varphi'(X^{0,x})X^{1,(x,u)}]
$.
\end{enumerate}
\end{lemma}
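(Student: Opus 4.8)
The plan is to establish the three assertions essentially simultaneously by combining a difference-quotient argument with a dominated-convergence step, using the hypotheses on $X^{0,\cdot}$ and $X^{1,\cdot}$ in $\cap_{p\in[1,\infty)}\mathcal{L}^p(\P;V)$ together with the polynomial growth bound on $\varphi'$. First I would treat item~\eqref{item:stoch.derivative.integrability}: since $\varphi\in C^1(V,W)$, the mean value inequality gives $\|\varphi(v)\|_W\le\|\varphi(0)\|_W+\|v\|_V\sup_{r\in[0,1]}\|\varphi'(rv)\|_{L(V,W)}$, and the growth hypothesis on $\varphi'$ then yields a bound of the form $\|\varphi(v)\|_W\le c\,(\max\{1,\|v\|_V\})^{p_0+1}$ for a suitable $p_0\in[1,\infty)$ and $c\in[0,\infty)$; likewise $\|\varphi'(v)\,w\|_W\le c\,(\max\{1,\|v\|_V\})^{p_0}\|w\|_V$. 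Plugging in $v=X^{0,x}$ and $w=X^{1,(x,u)}$ and using $X^{0,x}\in\mathcal{L}^{p_0+1}(\P;V)$, $X^{1,(x,u)}\in\mathcal{L}^{2}(\P;V)$ together with the Cauchy–Schwarz inequality (to split the product $\|X^{0,x}\|_V^{p_0}\,\|X^{1,(x,u)}\|_V$ into $\mathcal{L}^{2p_0}$ and $\mathcal{L}^2$ factors) shows both expectations are finite.

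Next I would prove the differentiability statements \eqref{item:stoch.derivative} and \eqref{item:stoch.derivative.representation} together. Fix $x,u\in U$ and consider, for $h\in\R\setminus\{0\}$, the difference quotient $\tfrac1h\big(\E[\varphi(X^{0,x+hu})]-\E[\varphi(X^{0,x})]\big)$. By the fundamental theorem of calculus applied pathwise to $r\mapsto\varphi(X^{0,x+rhu}(\omega))$ — which is legitimate since by hypothesis $r\mapsto[X^{0,x+rhu}]_{\P,\mathcal{B}(V)}$ is $C^1$ from $\R$ to $L^p(\P;V)$ with derivative $[X^{1,(x+rhu,u)}]_{\P,\mathcal{B}(V)}$, and one can pass to a $\P$-a.e.\ pathwise statement after selecting representatives — this difference quotient equals $\E\big[\int_0^1\varphi'(X^{0,x+rhu})X^{1,(x+rhu,u)}\,dr\big]$. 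The target is $\E[\varphi'(X^{0,x})X^{1,(x,u)}]$, so it remains to justify passing $h\to0$ inside. For this I would use the continuity of $r\mapsto[X^{0,x+rhu}]_{\P,\mathcal{B}(V)}$ and $r\mapsto[X^{1,(x+rhu,u)}]_{\P,\mathcal{B}(V)}$ in $L^p(\P;V)$ to extract, along any sequence $h_n\to0$, a subsequence along which the relevant random variables converge $\P$-a.s., combine with continuity of $\varphi'$, and dominate using the polynomial bounds above together with a uniform $\mathcal{L}^p$-bound for $\{X^{0,x+rhu}:r\in[0,1],\,|h|\le1\}$ and $\{X^{1,(x+rhu,u)}:r\in[0,1],\,|h|\le1\}$ (such a uniform bound follows from continuity of the $L^p$-valued maps on the compact parameter set). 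Dominated convergence then gives convergence of the difference quotient to $\E[\varphi'(X^{0,x})X^{1,(x,u)}]$; this proves Gateaux differentiability with the claimed derivative.

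Finally, to upgrade Gateaux differentiability to Fr\'echet differentiability and to obtain the full $C^1$-regularity asserted in \eqref{item:stoch.derivative}, I would verify that the candidate derivative $U\ni x\mapsto\big(U\ni u\mapsto\E[\varphi'(X^{0,x})X^{1,(x,u)}]\big)\in L(U,W)$ is continuous; continuity of a Gateaux derivative that is linear and bounded in the direction automatically promotes it to a Fr\'echet derivative. Linearity in $u$ is inherited from linearity of $X^{1,(x,\cdot)}$ (which follows from it being $\tfrac{d}{dx}[X^{0,x}]$ applied to $u$) and of $\varphi'(X^{0,x})$; boundedness in $u$ follows from the estimate in item~\eqref{item:stoch.derivative.integrability}. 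For continuity in $x$, I would again invoke the $C^1$-in-$L^p$ hypothesis on $X^{0,\cdot}$ and $X^{1,\cdot}$, continuity of $\varphi'$, and the polynomial domination to apply dominated convergence once more. The main obstacle I anticipate is the bookkeeping in the domination step: one must carefully balance the polynomial growth exponent of $\varphi'$ against the $\mathcal{L}^p$-integrability of $X^{0,\cdot}$ (available for every $p\in[1,\infty)$) and of $X^{1,\cdot}$, and ensure the uniformity of these $\mathcal{L}^p$-bounds over the compact segment $\{x+rhu:r\in[0,1],\,|h|\le1\}$ — this requires using the local boundedness of the continuous $L^p(\P;V)$-valued maps rather than merely their pointwise finiteness, and it is the place where the hypothesis $X^{k,\mathbf u}\in\cap_{p\in[1,\infty)}\mathcal{L}^p(\P;V)$ (as opposed to a fixed $p$) is genuinely used.
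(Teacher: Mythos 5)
Your treatment of item~(i) matches the paper's (mean value inequality plus the polynomial growth of $\varphi'$, then H\"older with $X^{0,x}\in\mathcal{L}^{2p_0}(\P;V)$ and $X^{1,(x,u)}\in\mathcal{L}^2(\P;V)$), and your final step --- upgrading a continuous, linear-and-bounded Gateaux derivative to a Fr\'echet derivative, with continuity obtained from a Vitali-type argument (convergence in probability plus uniform $\mathcal{L}^q$-bounds) --- is also in line with what the paper does. The problem is the middle step.

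There is a genuine gap in your identity
\begin{equation*}
\tfrac1h\big(\E[\varphi(X^{0,x+hu})]-\E[\varphi(X^{0,x})]\big)
=\E\Big[\textstyle\int_0^1\varphi'(X^{0,x+rhu})\,X^{1,(x+rhu,u)}\,dr\Big].
\end{equation*}
You justify it by applying the fundamental theorem of calculus pathwise to $r\mapsto\varphi(X^{0,x+rhu}(\omega))$, claiming that the $\Cp{1}{}(U,\lpnb{p}{\P}{V})$ hypothesis on $y\mapsto[X^{0,y}]_{\P,\mathcal{B}(V)}$ can be upgraded to a $\P$-a.e.\ pathwise differentiability statement ``after selecting representatives.'' That upgrade is not available: differentiability of $y\mapsto[X^{0,y}]_{\P,\mathcal{B}(V)}$ in the $\lpnb{p}{\P}{V}$-sense only controls difference quotients in $\mathcal{L}^p$-norm; it does not produce, for $\P$-a.e.\ $\omega$, a version of $r\mapsto X^{0,x+rhu}(\omega)$ that is differentiable in $V$ for all (or even Lebesgue-a.e.) $r$ with derivative $h\,X^{1,(x+rhu,u)}(\omega)$, and choosing representatives separately for each $r$ does not give a single null set outside of which the path is differentiable. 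Nor can you instead differentiate $r\mapsto\E[\varphi(X^{0,x+rhu})]$ directly, since that is precisely the chain rule being proved --- the argument would be circular. The repair is to apply the fundamental theorem of calculus to the deterministic $C^1$-function $\varphi$ along the straight segment in $V$ joining $X^{0,x}(\omega)$ and $X^{0,x+u}(\omega)$, which requires no regularity of the random field in the parameter; this yields the decomposition
\begin{equation*}
\E[\varphi(X^{0,x+u})]-\E[\varphi(X^{0,x})]-\E[\varphi'(X^{0,x})X^{1,(x,u)}]
=\E\big[R_u\big]
+\E\big[\varphi'(X^{0,x})\big(X^{0,x+u}-X^{0,x}-X^{1,(x,u)}\big)\big]
\end{equation*}
with $R_u=\int_0^1[\varphi'(X^{0,x}+\rho[X^{0,x+u}-X^{0,x}])-\varphi'(X^{0,x})](X^{0,x+u}-X^{0,x})\,d\rho$, where the second term is $o(\|u\|_U)$ directly by the definition of the $\mathcal{L}^p$-derivative and H\"older, and the first is handled by your Vitali/uniform-integrability argument. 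This is exactly the route the paper takes (and it yields Fr\'echet differentiability in one pass, without the Gateaux detour).
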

\begin{proof}
Throughout this proof 
let 
$
  c_{k,r} \in [0,\infty]
$, $ r \in (0,\infty) $, $ k \in \{0,1\} $, 
be the extended real numbers which satisfy for all
$ r \in (0,\infty) $ that 
\begin{equation}
\begin{split}
&
  c_{0,r}
  =
  \sup_{ x \in V }
  \bigg[
  \frac{
  	\|\varphi(x)\|_W
  }{
  |\!\max\{1,\|x\|_{V}\}|^r
  }
  \bigg]
  \qquad\text{ and }\qquad
  c_{1,r}
  =
  \sup_{ x \in V }
  \bigg[
  \frac{
  	\|\varphi'(x)\|_{L(V,W)}
  }{
  |\!\max\{1,\|x\|_{V}\}|^r} 
  \bigg]
\end{split}
\end{equation} 
and let $ p \in [1,\infty) $ 
be a real number which satisfies that 
$
  c_{1,p}
  < \infty
$. 
We note that the fundamental theorem of calculus implies that for all 
$ x \in V $ 
it holds that 
\begin{equation}
\begin{split}
&
  \|\varphi(x) - \varphi(0)\|_W
  =
  \left\|
  \int^1_0
  \varphi'(\rho x) x
  \, d\rho
  \right\|_W
\leq
  \int^1_0
  \|\varphi'(\rho x)\|_{L(V,W)} \,
  \|x\|_{V}
  \, d\rho
\\&\leq
  c_{1,p} \,
  \|x\|_{V}
  \sup\nolimits_{\rho\in[0,1]}
  |\!\max\{1,\|\rho x\|_{V}\}|^p
=
  c_{1,p} \,
  \|x\|_{V} \,
  |\!\max\{1,\|x\|_{V}\}|^p
\\&\leq
  c_{1,p} \,
  |\!\max\{1,\|x\|_{V}\}|^{(p+1)}
  .
\end{split}
\end{equation}
This ensures that 
$
  c_{0,p+1}
  < \infty
$.
H\"{o}lder's inequality 
and the fact that 
$ c_{1,p} < \infty $
therefore show that for all 
$ x, u \in U $ 
it holds that 
\begin{equation}
\label{eq:holder.stoch.derivative}
\begin{split}
&
  \E[\|\varphi'(X^{0,x}) X^{1,(x,u)}\|_W]
  \leq
  c_{1,p} \,
  \E[|\!\max\{1,\|X^{0,x}\|_{V}\}|^p \, \|X^{1,(x,u)}\|_{V}]
\\&\leq
  c_{1,p} \,
  \|\!\max\{1,\|X^{0,x}\|_{V}\}\|^p_{\lpn{2p}{\P}{\R}} \,
  \|X^{1,(x,u)}\|_{\lpn{2}{\P}{V}}  
  < \infty
\end{split}
\end{equation}
and 
\begin{equation}
  \E[\|\varphi(X^{0,x})\|_W]
  \leq
  c_{0,p+1} \,
  \E[|\!\max\{1,\|X^{0,x}\|_{V}\}|^{(p+1)}]
  < \infty.
\end{equation}
This proves item~\eqref{item:stoch.derivative.integrability}.
Next note that~\eqref{eq:holder.stoch.derivative} and the fact that 
$
  \forall \, q \in [1,\infty), \,  x \in U
  \colon
  \big(
  U \ni u \mapsto
  [X^{1,(x,u)}]_{\P,\mathcal{B}(V)}
  \in \lpnb{q}{\P}{V}
  \big) 
  \in L( U, \lpnb{q}{\P}{V} )
$ 
ensure that for every $ x \in U $ 
it holds
\begin{enumerate}[a)]
\item
that 
\begin{equation}
\begin{split}
&
  \sup_{ u \in U, \, \|u\|_U=1 }
    \|
    \E[\varphi'(X^{0,x})X^{1,(x,u)}]
    \|_W
\\&\leq
  c_{1,p} \,
  \|\!\max\{1,\|X^{0,x}\|_{V}\}\|^p_{\lpn{2p}{\P}{\R}}
  \sup_{ u \in U, \, \|u\|_U=1 }
  \|X^{1,(x,u)}\|_{\lpn{2}{\P}{V}}
  < \infty 
\end{split}
\end{equation} 
and 
\item
that the function 
$
  \big(
    U \ni u \mapsto \E[\varphi'(X^{0,x})X^{1,(x,u)}] \in W
  \big)
$ 
is linear. 
\end{enumerate} 
Hence, we obtain that
\begin{equation}
\label{eq:derivative.composition}
  \big(
    U \ni u \mapsto \E[\varphi'(X^{0,x})X^{1,(x,u)}] \in W
  \big)
  \in L( U, W )
  .
\end{equation}
In the next step we demonstrate that for all 
$ x \in U $ 
it holds that 
\begin{equation}
\label{eq:1st.derivative}
  \limsup_{ U\setminus\{0\} \ni u \to 0 }
  \bigg(
  \frac{
	\|
	\E[ \varphi(X^{0,x+u}) ] - \E[ \varphi(X^{0,x}) ] -
	\E[ \varphi'(X^{0,x}) X^{1,(x,u)} ]
	\|_W
  }{
    \|u\|_U
  }
  \bigg)
  =0.
\end{equation}
For this we first observe that for all 
$ x, u \in U $
it holds that 
\begin{equation}
\label{eq:1st.derivative.decompose}
\begin{split}
&
  	\|
  	\E[ \varphi(X^{0,x+u}) ] - \E[ \varphi(X^{0,x}) ] -
  	\E[ \varphi'(X^{0,x}) X^{1,(x,u)} ]
  	\|_W
\\&\leq
  	\|
  	\E[ \varphi(X^{0,x+u}) - \varphi(X^{0,x}) - \varphi'(X^{0,x})(X^{0,x+u}-X^{0,x}) ]
  	\|_W
\\&+
  	\|
  	\E[ \varphi'(X^{0,x})( X^{0,x+u} - X^{0,x} - X^{1,(x,u)} ) ]
  	\|_W
   .
\end{split}
\end{equation}
Moreover, we note that H\"{o}lder's inequality and the fact that 
$ c_{1,p} < \infty $ 
ensure that for all 
$ x \in U $ 
it holds that 
\begin{equation}
\label{eq:1st.derivative.inner}
\begin{split}
&
  \limsup_{ U\setminus\{0\} \ni u \to 0 }
  \bigg(
  \frac{
  	\|
  	\E[ \varphi'(X^{0,x})( X^{0,x+u} - X^{0,x} - X^{1,(x,u)} ) ]
  	\|_W
  }{
  \|u\|_{U}  
  }
  \bigg)
\\&\leq
  \|\varphi'(X^{0,x})\|_{\lpn{2}{\P}{L(V,W)}}
  \limsup_{ U\setminus\{0\} \ni u \to 0 }
  \bigg(
  \frac{
  	\|
  	X^{0,x+u} - X^{0,x} - X^{1,(x,u)}
  	\|_{\lpn{2}{\P}{V}}
  }{
  \|u\|_{U} } 
  \bigg)
\\&\leq
  c_{1,p} \,
  \|\!\max\{1,\|X^{0,x}\|_{V}\}\|^p_{\lpn{2p}{\P}{\R}}
  \limsup_{ U\setminus\{0\} \ni u \to 0 }
  \bigg(
  \frac{
  	\|
  	X^{0,x+u} - X^{0,x} - X^{1,(x,u)}
  	\|_{\lpn{2}{\P}{V}}
  }{
  \|u\|_{U} } 
  \bigg)
  = 0.
\end{split}
\end{equation} 
Furthermore, we observe that the fundamental theorem of calculus shows that for all 
$ x, u \in U $ it holds that 
\begin{equation}
\begin{split}
&
  \|
    \varphi(X^{0,x+u}) - \varphi(X^{0,x}) - \varphi'(X^{0,x})(X^{0,x+u}-X^{0,x})
  \|_W
\\&=
  \left\|
    \int^1_0
    \big[\varphi'( X^{0,x} + \rho [ X^{0,x+u}-X^{0,x} ] ) - \varphi'(X^{0,x})\big] (X^{0,x+u}-X^{0,x})
    \, d\rho
  \right\|_W
\\&\leq
  \|
    X^{0,x+u}-X^{0,x}
  \|_{V}  
  \int^1_0
  \| \varphi'( X^{0,x} + \rho [ X^{0,x+u}-X^{0,x} ] ) - \varphi'(X^{0,x}) \|_{ L(V,W) }
  \, d\rho
  .
\end{split}
\end{equation}
H\"{o}lder's inequality and Jensen's inequality therefore imply that for all 
$ x, u \in U $ it holds that 
\begin{equation}
\label{eq:2nd.term.bound}
\begin{split}
&
  	\|
  	\E[ \varphi(X^{0,x+u}) - \varphi(X^{0,x}) - \varphi'(X^{0,x})(X^{0,x+u}-X^{0,x}) ]
  	\|_W
\\&\leq
  \left\{
  \E\!\left[
  \int^1_0
  \| \varphi'( X^{0,x} + \rho [ X^{0,x+u}-X^{0,x} ] ) - \varphi'(X^{0,x}) \|^2_{ L(V,W) }
  \, d\rho  
  \right]
  \right\}^{1/2}    
\\&\cdot
  \|
    X^{0,x+u}-X^{0,x}
  \|_{\lpn{2}{\P}{V}}
  .  
\end{split}
\end{equation}
Moreover, note that for all 
$ q \in (2,\infty) $, 
$ \rho \in [0,1] $, 
$ x, y \in U $ 
it holds that 
\begin{equation}
\label{eq:integrand.Lp.bounded}
\begin{split}
&
  \E\big[\| \varphi'( X^{0,x} + \rho [ X^{0,y}-X^{0,x} ] ) \|^q_{ L(V,W) }\big]
\\&\leq
  |c_{1,p}|^q \,
  \E\big[|\!
    \max\{1,\|X^{0,x} + \rho [ X^{0,y}-X^{0,x} ]\|^p_{V}\} 
  |^q\big]
\\&\leq
  |c_{1,p}|^q \, 
  \E\big[|\! 
    \max\{1,\|X^{0,x}\|_{V},\|X^{0,y}\|_{V}\} 
  |^{pq}\big]
\\&\leq
  |c_{1,p}|^q  
  \big(
    1
    +
    \E\big[\|X^{0,x}\|^{pq}_{V}\big]  
    +
    \E\big[\|X^{0,y}\|^{pq}_{V}\big]    
  \big) 
  .
\end{split}
\end{equation}
This and the fact that 
$
  \forall \, q \in [1,\infty)
  \colon
  \big( U \ni x \mapsto [X^{0,x}]_{\P,\mathcal{B}(V)} \in \lpnb{q}{\P}{V} \big)
  \in \Cp{}{}(U,\lpnb{q}{\P}{V})
$ 
ensure that for all 
$ q \in (2,\infty) $, 
$ x \in U $ 
it holds that 
\begin{equation}
\label{eq:UI}
\begin{split}
&
  \limsup_{ U \ni u \to 0 }
  \int^1_0
  \E\big[\| \varphi'( X^{0,x} + \rho [ X^{0,x+u}-X^{0,x} ] ) \|^q_{ L(V,W) }\big]
  \, d\rho  
\leq
  |c_{1,p}|^q \,
  \big(
    1
    +
    2 \, \E\big[\|X^{0,x}\|^{pq}_{V}\big]  
  \big) 
\\&< \infty.
\end{split}
\end{equation}
In addition, observe that the fact that 
$
  \forall \, q \in [1,\infty)
  \colon
  \big(
    U \ni x \mapsto
    [X^{0,x}]_{ \P, \mathcal{B}(V) } \in \lpnb{q}{\P}{V}
  \big) 
  \in \Cp{}{}(U,\lpnb{q}{\P}{V})
$
shows that for all 
$ x \in U $ it holds that 
\begin{equation}
  \limsup\nolimits_{ U \ni y \to x }
  \E\big[\!
  \min\{1,
    \|X^{0,x}-X^{0,y}\|_V
  \}
  \big]
  =0.
\end{equation}
This implies that for all 
$ \rho \in [0,1] $, 
$ x \in U $ 
it holds that 
\begin{equation}
  \limsup\nolimits_{ U \ni y \to x }
  \E\big[\!
  \min\{1,
    \|(X^{0,x} + \rho [ X^{0,y} - X^{0,x} ]) - X^{0,x}\|_V
  \}
  \big]
  =0.
\end{equation}
The fact that 
$ \varphi' \in \Cp{}{}( V, L(V,W) ) $ 
hence ensures that for all 
$ \rho \in [0,1] $, 
$ x \in U $ 
it holds that 
\begin{equation}
\label{eq:integrand.CIP}
  \limsup\nolimits_{ U \ni y \to x }
  \E\big[\!\min\{
    1,
    \| \varphi'( X^{0,x} + \rho [ X^{0,y}-X^{0,x} ] ) - \varphi'(X^{0,x}) \|_{ L(V,W) }
  \}\big]
  = 0.
\end{equation}
This and Lebesgue's theorem of dominated convergence imply that for all 
$ x \in U $ 
it holds that 
\begin{equation}
\label{eq:CIP}
  \limsup_{ U \ni u \to 0 }
  \int^1_0
  \E\big[\!\min\{
    1,
    \| \varphi'( X^{0,x} + \rho [ X^{0,x+u}-X^{0,x} ] ) - \varphi'(X^{0,x}) \|_{ L(V,W) }
  \}\big]
  \, d\rho
  = 0.
\end{equation}
Combining this and, e.g., Lemma~4.2 in 
Hutzenthaler et al.~\cite{HutzenthalerJentzenSalimova2016arXiv} 
(with 
$ I = \{\emptyset\} $, 
$ 
  (\Omega,\mathcal{F},\P) =
  ([0,1] \times\Omega,\mathcal{B}([0,1])\otimes\mathcal{F},
  \mu_{[0,1]}\otimes\P) 
$, 
$ c=1 $, 
$
  X^n(\emptyset,(\rho,\omega)) = 
  \|\varphi'( X^{0,x}(\omega) + \rho [ X^{0,x+u_n}(\omega) - X^{0,x}(\omega) ] ) - \varphi'( X^{0,x}(\omega) )\|_{L(V,W)}
$
for 
$ (\rho,\omega) \in [0,1] \times \Omega $, 
$ n \in \N $, 
$ x \in U $, 
$
  (u_m)_{ m \in \N }
  \in 
  \{
    v \in \mathbb{M}( \N, U )
    \colon
    \limsup_{ m \to \infty }
    \|v_m\|_U
    =0
  \}
$
in the notation of Lemma~4.2 in 
Hutzenthaler et al.~\cite{HutzenthalerJentzenSalimova2016arXiv}) 
establishes that for all 
$ \varepsilon \in (0,\infty) $, 
$ x \in U $
and all sequences $ (u_n)_{ n \in \N } \subseteq U $ 
with 
$
    \limsup_{ n \to \infty }
    \|u_n\|_U
    =0
$ 
it holds that 
\begin{multline}
  \limsup\nolimits_{ n \to \infty } \,
  ( \mu_{[0,1]} \otimes \P )\big(\big\{
    (\rho,\omega) \in [0,1] \times \Omega \colon
    \| \varphi'( X^{0,x}(\omega) + \rho [ X^{0,x+u_n}(\omega)-X^{0,x}(\omega) ] ) 
\\
    - \varphi'(X^{0,x}(\omega)) \|_{ L(V,W) }
    \geq \varepsilon
  \big\}\big)
  =0.
\end{multline}
This, \eqref{eq:UI}, and, e.g., Proposition~4.5 in 
Hutzenthaler et al.~\cite{HutzenthalerJentzenSalimova2016arXiv} 
(with 
$ I = \{\emptyset\} $, 
$ 
  (\Omega,\mathcal{F},\P) =
  ([0,1] \times\Omega,\mathcal{B}([0,1])\otimes\mathcal{F},
  \mu_{[0,1]}\otimes\P) 
$, 
$ p = q $, 
$ V = \R $, 
$
  X^n(\emptyset,(\rho,\omega)) = 
  \|\varphi'( X^{0,x}(\omega) + \rho [ X^{0,x+u_n}(\omega) - X^{0,x}(\omega) ] ) - \varphi'( X^{0,x}(\omega) )\|_{L(V,W)}
$
for 
$ (\rho,\omega) \in [0,1] \times \Omega $, 
$ n \in \N_0 $, 
$ x \in U $, 
$ q \in (2,\infty) $, 
$
  (u_m)_{ m \in \N_0 }
  \in 
  \{
    v \in \mathbb{M}( \N_0, U )
    \colon
    \limsup_{ m \to \infty }
    \|v_m\|_U
    =
    \|v_0\|_U
    =0
  \}
$
in the notation of Proposition~4.5 in 
Hutzenthaler et al.~\cite{HutzenthalerJentzenSalimova2016arXiv})
yield that for all 
$ x \in U $ 
and all sequences 
$ (u_n)_{ n \in \N_0 } \subseteq U $ 
with 
$
  \limsup_{ n \to \infty }
  \|u_n\|_U = \|u_0\|_U = 0
$ 
it holds that 
\begin{equation}
\label{eq:double.integral.convergence}
  \limsup_{ n \to \infty }
  \int^1_0
  \E\big[\| \varphi'( X^{0,x} + \rho [ X^{0,x+u_n}-X^{0,x} ] ) - \varphi'(X^{0,x}) \|^2_{ L(V,W) }\big]
  \, d\rho  
  = 0.
\end{equation}
Moreover, observe that the triangle inequality and the fact that 
$
  \forall \, q \in [1,\infty), \,  x \in U
  \colon
  \big(
  U \ni u \mapsto
  [X^{1,(x,u)}]_{\P,\mathcal{B}(V)}
  \in \lpnb{q}{\P}{V}
  \big) 
  \in L( U, \lpnb{q}{\P}{V} )
$ 
assure that for all 
$ x \in U $ 
it holds that 
\begin{equation}
\label{eq:derivative.boundedness}
\begin{split}
&
  \limsup_{ U \setminus \{0\} \ni u \to 0 }
  \bigg[
  \frac{
    \| X^{0,x+u} - X^{0,x} \|_{ \lpn{2}{\P}{V} }
  }{
    \|u\|_U
  }
  \bigg]
\\&\leq
  \limsup_{ U \setminus \{0\} \ni u \to 0 }
  \bigg[
  \frac{
    \| X^{0,x+u} - X^{0,x} - X^{1,(x,u)} \|_{ \lpn{2}{\P}{V} }
  }{
    \|u\|_U
  }
  \bigg]
  +
  \sup_{ u \in U \setminus \{0\} }
  \bigg[
  \frac{
    \| X^{ 1,(x,u) } \|_{ \lpn{2}{\P}{V} }
  }{
    \|u\|_U
  }
  \bigg]
\\&=
  \sup_{ u \in U \setminus \{0\} }
  \bigg[
  \frac{
    \| X^{ 1,(x,u) } \|_{ \lpn{2}{\P}{V} }
  }{
    \|u\|_U
  }
  \bigg]
  < \infty.
\end{split}
\end{equation}
Putting~\eqref{eq:double.integral.convergence}--\eqref{eq:derivative.boundedness} 
into~\eqref{eq:2nd.term.bound} yields that for all 
$ x \in U $ 
it holds that 
\begin{equation}
\label{eq:2nd.term.vanish}
\begin{split}
&
  	\limsup_{  U \setminus \{0\} \ni u \to 0 }
  	\bigg(
  	\frac{\|
  	\E[ \varphi(X^{0,x+u}) - \varphi(X^{0,x}) - \varphi'(X^{0,x})(X^{0,x+u}-X^{0,x}) ]
  	\|_W  }{ \|u\|_U }
  	\bigg)
\\&\leq
  \limsup_{  U \setminus \{0\} \ni u \to 0 }
  \bigg(
  \frac{
    \| X^{0,x+u} - X^{0,x} \|_{ \lpn{2}{\P}{V} }
  }{
    \|u\|_U
  }
  \bigg)
\\&\cdot
  \left[
  \limsup_{  U \setminus \{0\} \ni u \to 0 }
  \int^1_0
  \E\big[\| \varphi'( X^{0,x} + \rho [ X^{0,x+u}-X^{0,x} ] ) - \varphi'(X^{0,x}) \|^2_{ L(V,W) }\big]
  \, d\rho  
  \right]^{1/2}  
  =0.  
\end{split}
\end{equation}
Combining~\eqref{eq:1st.derivative.decompose}, \eqref{eq:1st.derivative.inner}, and~\eqref{eq:2nd.term.vanish} proves~\eqref{eq:1st.derivative}.
In the next step we demonstrate that 
\begin{equation}
\label{eq:derivative.continuity}
\big(
U \ni x \mapsto
\big(
U \ni u \mapsto \E[\varphi'(X^{0,x})X^{1,(x,u)}] \in W
\big)
\in L( U, W )
\big)
\in
\Cp{}{}(U,L(U,W))
.
\end{equation}
Observe that~\eqref{eq:integrand.Lp.bounded} and the fact that 
$
  \forall \, q \in [1,\infty)
  \colon
  \limsup_{ U \ni y \to x }
  \E[ \|X^{0,y}\|^q_{V} ]
  =
  \E[ \|X^{0,x}\|^q_{V} ]
  < \infty
$ 
ensure that for all 
$ q \in (2,\infty) $, 
$ \rho \in [0,1] $, 
$ x \in U $ it holds that 
\begin{equation}
\begin{split}
&
  \limsup\nolimits_{ U \ni y \to x }
  \E\big[\|\varphi'(X^{0,x} + \rho [X^{0,y}-X^{0,x}])\|^q_{L(V,W)}\big]
\\&\leq
  |c_{1,p}|^q \, 
  \big(
    1
    +
    \E\big[ \|X^{0,x}\|^{pq}_V \big]
    +
    \limsup\nolimits_{ U \ni y \to x }
    \E\big[ \|X^{0,y}\|^{pq}_V \big]
  \big)
\\&=
  |c_{1,p}|^q \, 
  \big(
    1
    +
    2 \, \E\big[ \|X^{0,x}\|^{pq}_V \big]
  \big)
  <\infty.
\end{split}
\end{equation}
Hence, we obtain that for all 
$ q \in (2,\infty) $, 
$ x \in U $ 
it holds that 
\begin{equation}
\label{eq:integrand.Lp.bounded.II}
\begin{split}
&
  \limsup_{ U \ni y \to x }
  \E\big[
  \|\varphi'(X^{0,x})-\varphi'(X^{0,y})\|^q_{L(V,W)}
  \big]
\\&\leq
  2^q
  \limsup_{ U \ni y \to x }
  \max_{ \rho \in \{0,1\} }
  \E\big[
  \|\varphi'(X^{0,x}+\rho[X^{0,y}-X^{0,x}])\|^q_{L(V,W)}
  \big]
  < \infty.
\end{split}
\end{equation}
Moreover, note that~\eqref{eq:integrand.CIP} 
(with $\rho=1$ in the notation of~\eqref{eq:integrand.CIP})
and, e.g., Lemma~4.2 in 
Hutzenthaler et al.~\cite{HutzenthalerJentzenSalimova2016arXiv} 
(with 
$ I = \{\emptyset\} $, 
$ 
  (\Omega,\mathcal{F},\P) =
  (\Omega,\mathcal{F},\P) 
$, 
$ c=1 $, 
$
  X^n(\emptyset,\omega) = 
  \|\varphi'( X^{0,u_n}(\omega)  ) - \varphi'( X^{0,u_0}(\omega) )\|_{L(V,W)}
$
for 
$ \omega \in \Omega $, 
$ n \in \N $, 
$
  (u_m)_{ m \in \N_0 }
  \in 
  \{
    v \in \mathbb{M}( \N_0, U )
    \colon
    \limsup_{ m \to \infty }
    \|v_m-v_0\|_U
    =0
  \}
$
in the notation of Lemma~4.2 in 
Hutzenthaler et al.~\cite{HutzenthalerJentzenSalimova2016arXiv}) 
establishes that for all 
$ \varepsilon \in (0,\infty) $ 
and all sequences 
$ (u_n)_{ n \in \N_0 } \subseteq U $ 
with 
$
  \limsup_{ n \to \infty }
  \| u_n - u_0 \|_U = 0
$ 
it holds that 
\begin{equation}
  \limsup\nolimits_{ n \to \infty }
  \P\big(
    \|\varphi'( X^{0,u_n}  ) - \varphi'( X^{0,u_0} )\|_{L(V,W)}
    \geq \varepsilon
  \big)
  =0.
\end{equation}
Combining this, \eqref{eq:integrand.Lp.bounded.II}, and, e.g.,
Proposition~4.5 in 
Hutzenthaler et al.~\cite{HutzenthalerJentzenSalimova2016arXiv} 
(with 
$ I = \{\emptyset\} $, 
$ 
  (\Omega,\mathcal{F},\P) =
  (\Omega,\mathcal{F},\P) 
$, 
$ p = q $, 
$ V = \R $, 
$
  X^n(\emptyset,\omega) = 
  \|\varphi'( X^{0,u_n}(\omega)  ) - \varphi'( X^{0,u_0}(\omega) )\|_{L(V,W)}
$
for 
$ \omega \in \Omega $, 
$ q \in (2,\infty) $, 
$ n \in \N_0 $, 
$
  (u_m)_{ m \in \N_0 }
  \in 
  \{
    v \in \mathbb{M}( \N_0, U )
    \colon
    \limsup_{ m \to \infty }
    \|v_m-v_0\|_U
    =0
  \}
$
in the notation of Proposition~4.5 in 
Hutzenthaler et al.~\cite{HutzenthalerJentzenSalimova2016arXiv})
yields that for all sequences 
$
  (u_n)_{ n \in \N_0 }
  \subseteq U
$ 
with 
$
    \limsup_{ n \to \infty }
    \|u_n-u_0\|_U
    =0
$
it holds that 
\begin{equation}
\limsup\nolimits_{ n \to \infty }
\E\big[
\|\varphi'(X^{0,u_n})-\varphi'(X^{0,u_0})\|^2_{L(V,W)}
\big]
= 0.
\end{equation}
Next observe that the fact that for every 
$ q \in [1,\infty) $ 
it holds that the function 
$
  U \ni x \mapsto
  \big(
    U \ni u \mapsto
    [ X^{1,(x,u)} ]_{ \P, \mathcal{B}(V) }
    \in \lpnb{q}{\P}{V}
  \big)
  \in L( U, \lpnb{q}{\P}{V} )
$
is continuous shows that for all $ x \in U $ 
it holds that 
\begin{equation}
\limsup_{ U \ni y \to x }\sup_{ u \in U, \, \|u\|_U=1 }
\|X^{1,(y,u)}\|_{\lpn{2}{\P}{V}}
=
\sup_{ u \in U, \, \|u\|_U=1 }
\|X^{1,(x,u)}\|_{\lpn{2}{\P}{V}}
< \infty  
\end{equation} 
and 
\begin{equation}
  \limsup_{ U \ni y \to x }
  \sup_{ u \in U, \, \|u\|_U=1 }
  \|X^{1,(x,u)} - X^{1,(y,u)}\|_{\lpn{2}{\P}{V}}
  =0.
\end{equation}
H\"{o}lder's inequality and~\eqref{eq:integrand.Lp.bounded.II} 
hence ensure that for all $ x \in U $ it holds that 
\begin{equation}
\begin{split}
&
  \limsup_{ U \ni y \to x }
  \sup_{ u \in U, \, \|u\|_U=1 }
  \big\|
  \E[ \varphi'(X^{0,x}) X^{1,(x,u)} ]
  -
  \E[ \varphi'(X^{0,y}) X^{1,(y,u)} ]
  \big\|_W
\\&\leq
  \limsup_{ U \ni y \to x }
  \sup_{ u \in U, \, \|u\|_U=1 }
  \E\big[ \|\varphi'(X^{0,x}) (X^{1,(x,u)} - X^{1,(y,u)})\|_W \big]
\\&+
  \limsup_{ U \ni y \to x }
  \sup_{ u \in U, \, \|u\|_U=1 }
  \E\big[ \|[\varphi'(X^{0,x})-\varphi'(X^{0,y})] X^{1,(y,u)}\|_W \big]
\\&\leq
  \|\varphi'(X^{0,x})\|_{\lpn{2}{\P}{L(V,W)}}
  \limsup_{ U \ni y \to x }
  \sup_{ u \in U, \, \|u\|_U=1 }
  \|X^{1,(x,u)} - X^{1,(y,u)}\|_{\lpn{2}{\P}{V}}
\\&+
  \left[\limsup_{ U \ni y \to x }\|\varphi'(X^{0,x})-\varphi'(X^{0,y})\|_{\lpn{2}{\P}{L(V,W)}}\right]
  \limsup_{ U \ni y \to x }\sup_{ u \in U, \, \|u\|_U=1 }
  \|X^{1,(y,u)}\|_{\lpn{2}{\P}{V}}
\\&\leq
  c_{1,p} \,
  \|\!\max\{1,\|X^{0,x}\|_V\}\|^p_{ \lpn{2p}{\P}{\R} }
  \limsup_{ U \ni y \to x }
  \sup_{ u \in U, \, \|u\|_U=1 }
  \|X^{1,(x,u)} - X^{1,(y,u)}\|_{\lpn{2}{\P}{V}}
\\&+
  \left[\limsup_{ U \ni y \to x }\|\varphi'(X^{0,x})-\varphi'(X^{0,y})\|_{\lpn{2}{\P}{L(V,W)}}\right]
  \sup_{ u \in U, \, \|u\|_U=1 }
  \|X^{1,(x,u)}\|_{\lpn{2}{\P}{V}}
  =0. 
\end{split}
\end{equation} 
This proves~\eqref{eq:derivative.continuity}. 
Combining~\eqref{eq:derivative.composition}, \eqref{eq:1st.derivative}, 
and~\eqref{eq:derivative.continuity} establishes item~\eqref{item:stoch.derivative} and item~\eqref{item:stoch.derivative.representation}.
The proof of Lemma~\ref{lem:stoch.chain.rule} is thus completed.
\end{proof}
\begin{lemma}[Pointwise differentiation]
\label{lem:partial.diff}
Let 
$
  \left(
    V,
    \left\| \cdot \right\|_V
  \right)
$
and 
$
  \left(
    W,
    \left\| \cdot \right\|_W
  \right)
$ 
be $\R$-Banach spaces with $\#_V > 1$ 
and let $ n \in \N $, 
$ f \in \Cp{n}{}(V,W) $, 
$ g \in \Cp{}{}(V,L^{(n+1)}(V,W)) $ 
satisfy for all 
$ \mathbf{u}=(u_1,u_2,\ldots,u_n) \in V^n $, 
$ x \in V $ 
that 
\begin{equation}
\label{eq:single.diff}
  \limsup_{ V\setminus\{0\} \ni h \to 0 }
  \bigg[
  \frac{
    \|
      f^{(n)}(x+h) \mathbf{u}
      -
      f^{(n)}(x) \mathbf{u}
      -
      g(x)(u_1,u_2,\ldots,u_n,h)
    \|_W
  }{
    \|h\|_V
  }
  \bigg]
  =0.
\end{equation}
Then it holds that 
$ f \in \Cp{n+1}{}(V,W) $ 
and 
$
  f^{(n+1)}=g
$.
\end{lemma}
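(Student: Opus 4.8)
The plan is to upgrade the pointwise (in the multilinear argument $\mathbf{u}$) differentiability encoded in \eqref{eq:single.diff} to genuine Fréchet differentiability of the map $f^{(n)}\colon V \to L^{(n)}(V,W)$ itself, and then to invoke the assumed continuity of $g$. Concretely, I would fix $x \in V$, set $R(h) := f^{(n)}(x+h) - f^{(n)}(x) - g(x)(\cdot,\ldots,\cdot,h) \in L^{(n)}(V,W)$ for $h \in V$ (note that $g(x)(\cdot,\ldots,\cdot,h)$ indeed lies in $L^{(n)}(V,W)$ because $g(x) \in L^{(n+1)}(V,W)$), and aim to show $\|R(h)\|_{L^{(n)}(V,W)}/\|h\|_V \to 0$ as $V\setminus\{0\}\ni h \to 0$. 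This would yield that $f^{(n)}$ is Fréchet differentiable at $x$ with derivative $h \mapsto g(x)(\cdot,\ldots,\cdot,h)$; since $x$ is arbitrary and $g$ is continuous, it follows that $f^{(n)} \in \Cp{}{}(V,L^{(n)}(V,W))$ is continuously differentiable, i.e.\ $f \in \Cp{n+1}{}(V,W)$ with $f^{(n+1)} = g$.

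The main obstacle is that \eqref{eq:single.diff} only provides the limit $0$ for each fixed $\mathbf{u}$, with no a priori control on the rate as $\mathbf{u}$ ranges over the product of unit balls in $V$; converting this into convergence in the operator norm of $L^{(n)}(V,W)$ is the crux of the argument. I would resolve this by a mean value / fundamental theorem of calculus argument along line segments. Fix $\mathbf{u} = (u_1,\ldots,u_n) \in V^n$ and $h \in V$, and consider $\gamma\colon [0,1] \to W$ given by $\gamma(t) := f^{(n)}(x+th)\mathbf{u} - t\,g(x)(u_1,\ldots,u_n,h)$. Applying \eqref{eq:single.diff} at the point $x+th$ in the direction $h$ shows that $\gamma$ is differentiable on $[0,1]$ with $\gamma'(t) = g(x+th)(u_1,\ldots,u_n,h) - g(x)(u_1,\ldots,u_n,h)$, and this is continuous in $t$ because $g$ is continuous and evaluation at a fixed $(n+1)$-tuple of vectors is a bounded linear functional on $L^{(n+1)}(V,W)$; hence $\gamma \in \Cp{1}{}([0,1],W)$.

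Now the fundamental theorem of calculus for continuously differentiable Banach-space-valued curves gives $R(h)\mathbf{u} = \gamma(1) - \gamma(0) = \int_0^1 \gamma'(t)\,dt$, so that
\begin{equation*}
  \|R(h)\mathbf{u}\|_W
  \le \Big[\sup_{t \in [0,1]} \|g(x+th) - g(x)\|_{L^{(n+1)}(V,W)}\Big]\,\|h\|_V \prod_{i=1}^n \|u_i\|_V .
\end{equation*}
Taking the supremum over all $\mathbf{u}=(u_1,\ldots,u_n)$ with $\|u_1\|_V \le 1, \ldots, \|u_n\|_V \le 1$ yields $\|R(h)\|_{L^{(n)}(V,W)} \le \|h\|_V \sup_{t\in[0,1]}\|g(x+th)-g(x)\|_{L^{(n+1)}(V,W)}$, and the continuity of $g$ at $x$ forces the right-hand factor to tend to $0$ as $h \to 0$. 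This is exactly the desired estimate.

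I expect the uniformity step just described to be the only substantive point; everything else is routine bookkeeping, namely the validity of the fundamental theorem of calculus for the $\Cp{1}{}$ curve $\gamma$ (equivalently one may use the mean value inequality $\|\gamma(1)-\gamma(0)\|_W \le \sup_{t\in[0,1]}\|\gamma'(t)\|_W$), the continuity of the relevant evaluation maps on spaces of multilinear operators, and the identification of the Fréchet derivative of $f^{(n)}$ with an element of $L^{(n+1)}(V,W)$ in the slot order fixed by \eqref{eq:single.diff}. Combining this with the hypothesis $f \in \Cp{n}{}(V,W)$ then gives $f \in \Cp{n+1}{}(V,W)$ and $f^{(n+1)} = g$, completing the proof.
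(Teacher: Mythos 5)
Your proposal is correct and follows essentially the same route as the paper: fix $\mathbf{u}$, use \eqref{eq:single.diff} to differentiate $t\mapsto f^{(n)}(x+th)\mathbf{u}$ along the segment, apply the fundamental theorem of calculus to obtain the bound $\|h\|_V\prod_i\|u_i\|_V$ times a quantity involving $\|g(x+\rho h)-g(x)\|_{L^{(n+1)}(V,W)}$ over $\rho\in[0,1]$, and then let continuity of $g$ force this to vanish uniformly over the product of unit balls. The only cosmetic difference is that you control the remainder by the supremum of $\|g(x+\rho h)-g(x)\|_{L^{(n+1)}(V,W)}$ while the paper uses its integral over $[0,1]$ together with dominated convergence.
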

\begin{proof}
We first note that~\eqref{eq:single.diff} 
and the fact that 
$
  \forall \, x, u_1, u_2, \ldots, u_n \in V
  \colon
  \big(
    V \ni h \mapsto g(x)(u_1, u_2, \ldots, 
$
$
    u_n,h) \in W
  \big) \in L(V,W)
$ 
and 
$
  \big(
    V \ni y \mapsto
    \big(
      V \ni h \mapsto g(y)(u_1, u_2, \ldots, u_n,h) \in W
    \big) \in L(V,W)
  \big) 
  \in \Cp{}{}(V,L(V,W))
$ 
imply that 
for all 
$ \mathbf{u} = (u_1, u_2, \ldots, u_n) \in V^n $, 
$ x, h \in V $
it holds that 
$
  \big(
  V \ni y \mapsto
  f^{(n)}(y) \mathbf{u} \in W
  \big) 
  \in \Cp{1}{}(V,W)
$ 
and 
$
  \big(\frac{d}{dx} \big(f^{(n)}(x) \mathbf{u}\big)\big) h
  =
  g(x)(u_1, u_2, \ldots, u_n,h)
$. 
This and the fundamental theorem of calculus imply that for all 
$ \mathbf{u} = (u_1,u_2,\ldots,u_n) \in V^n $, 
$ x, h \in V $
it holds that 
\begin{equation}
\label{eq:TFK}
\begin{split}
&
    \|
      f^{(n)}(x+h) \mathbf{u}
      -
      f^{(n)}(x) \mathbf{u}
      -
      g(x)(u_1, u_2, \ldots, u_n,h)
    \|_W
\\&=
  \big\|\textstyle\int^1_0
    \big[
      g(x+\rho h) - g(x)
    \big]
    (u_1, u_2, \ldots, u_n,h)
  \, d\rho\big\|_W
\\&\leq
  \|h\|_V
  \big[
  \smallprod^n_{i=1}
  \|u_i\|_V
  \big]
  \int^1_0
  \|g(x+\rho h) - g(x)\|_{ L^{(n+1)}(V,W) }
  \, d\rho
  .
\end{split}
\end{equation}
In addition, observe that the assumption that 
$ g \in \Cp{}{}(V,L^{(n+1)}(V,W)) $
ensures that for all 
$ x \in V $ 
it holds that 
\begin{equation}
  \limsup_{ V \ni h \to 0 }
  \sup_{ \rho \in [0,1] }
  \| g(x+\rho h) \|_{ L^{(n+1)}(V,W) }
  < \infty.
\end{equation}
Lebesgue's theorem of dominated convergence therefore ensures that for all 
$ x \in V $ 
it holds that 
\begin{equation}
\label{eq:Cb.convergence}
  \limsup\nolimits_{ V \ni h \to 0 }
  \textstyle\int^1_0
  \|g(x+\rho h) - g(x)\|_{ L^{(n+1)}(V,W) }
  \, d\rho
  =0.
\end{equation}
Combining~\eqref{eq:TFK} with~\eqref{eq:Cb.convergence} yields that 
for all $ x \in V $ it holds that 
\begin{equation}
  \limsup_{ V \setminus \{0\} \ni h \to 0 }
  \sup_{ \mathbf{u}=(u_1,u_2,\ldots,u_n) \in (V \setminus \{0\})^n }
  \bigg[
  \frac{
    \|
      f^{(n)}(x+h) \mathbf{u}
      -
      f^{(n)}(x) \mathbf{u}
      -
      g(x)(u_1, u_2, \ldots, u_n,h)
    \|_W
  }{
    \|h\|_V
    \prod^n_{i=1}
    \|u_i\|_V
  }
  \bigg]
  =0.
\end{equation}
This and the assumption that 
$ g \in \Cp{}{}(V,L^{(n+1)}(V,W)) $ 
complete the proof of Lemma~\ref{lem:partial.diff}.
\end{proof}

\section{Regularity of transition semigroups for stochastic evolution equations}
\label{sec:regularity_extended_transition_semigroup}

This section establishes regularity properties of the 
transition semigroup.

\begin{lemma}
\label{lem:test.derivative}
Let 
$
  (V, \left\|\cdot\right\|_V)
$ 
and 
$
  (W, \left\|\cdot\right\|_W)
$ 
be $\R$-Banach spaces with $ \#_V > 1 $, 
let $ n \in \N $, 
$
  \varphi \in \Cp{n+1}{}(V,W)
$, 
and let 
$
  \Phi \colon V^{n+1} \to W
$ 
be the function which satisfies for all 
$
  \mathbf{v}=(v_1,v_2,\ldots,v_{n+1}) \in V^{n+1}
$ 
that 
$
  \Phi(\mathbf{v})
  =
  \varphi^{(n)}(v_{n+1})(v_1,v_2,\ldots,v_n)
$. 
Then it holds for all 
$
  \mathbf{v}=(v_1,v_2,\ldots,v_{n+1}), \,
  \mathbf{h}=(h_1,h_2,\ldots,
$
$
  h_{n+1})
  \in V^{n+1}
$ 
that 
$ \Phi \in \Cp{1}{}(V^{n+1},W) $ 
and 
\begin{equation}
\label{eq:total.derivative}
\begin{split}
  \Phi'(\mathbf{v}) \mathbf{h}
  =&
      \varphi^{(n+1)}(v_{n+1})(v_1,v_2,\ldots,v_n,h_{n+1})
\\&+
      \smallsum^n_{i=1}
      \varphi^{(n)}(v_{n+1})(v_1,v_2,\ldots,v_{i-1},h_i,v_{i+1},v_{i+2},\ldots,v_n)
      .
\end{split}
\end{equation}
\end{lemma}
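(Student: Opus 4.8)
The plan is to verify \eqref{eq:total.derivative} directly from the definition of the Fr\'echet derivative and then to check continuity of the resulting derivative map. Throughout I would use two consequences of $\varphi\in\Cp{n+1}{}(V,W)$: (a) for every $x\in V$ the maps $\varphi^{(n)}(x)\in L^{(n)}(V,W)$ and $\varphi^{(n+1)}(x)\in L^{(n+1)}(V,W)$ are bounded multilinear, and (b) the map $V\ni x\mapsto\varphi^{(n)}(x)\in L^{(n)}(V,W)$ is Fr\'echet differentiable with derivative satisfying, for all $u_1,\ldots,u_n,h\in V$,
\begin{equation*}
\big(\tfrac{d}{dx}\big[\varphi^{(n)}(x)(u_1,u_2,\ldots,u_n)\big]\big)h=\varphi^{(n+1)}(x)(u_1,u_2,\ldots,u_n,h),
\end{equation*}
so that differentiating $\varphi^{(n)}$ appends the new direction as the last slot (matching the right-hand side of \eqref{eq:total.derivative}). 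Let $\Psi(\mathbf v)\colon V^{n+1}\to W$ denote the map sending $\mathbf h$ to the right-hand side of \eqref{eq:total.derivative}. Linearity of $\Psi(\mathbf v)$ in $\mathbf h$ is immediate, and the multilinear estimates $\|\varphi^{(n+1)}(v_{n+1})(v_1,\ldots,v_n,h_{n+1})\|_W\le\|\varphi^{(n+1)}(v_{n+1})\|_{L^{(n+1)}(V,W)}[\smallprod_{i=1}^n\|v_i\|_V]\|h_{n+1}\|_V$ and $\|\varphi^{(n)}(v_{n+1})(v_1,\ldots,h_i,\ldots,v_n)\|_W\le\|\varphi^{(n)}(v_{n+1})\|_{L^{(n)}(V,W)}[\smallprod_{j\neq i}\|v_j\|_V]\|h_i\|_V$ together with the triangle inequality show that $\Psi(\mathbf v)\in L(V^{n+1},W)$.

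Next I would prove Fr\'echet differentiability. Fixing $\mathbf v=(v_1,\ldots,v_{n+1})$ and $\mathbf h=(h_1,\ldots,h_{n+1})$ with $\max_i\|h_i\|_V\le1$, I would split
\begin{equation*}
\Phi(\mathbf v+\mathbf h)-\Phi(\mathbf v)=\big[\varphi^{(n)}(v_{n+1}+h_{n+1})-\varphi^{(n)}(v_{n+1})\big](v_1+h_1,\ldots,v_n+h_n)+\varphi^{(n)}(v_{n+1})\big[(v_1+h_1,\ldots,v_n+h_n)-(v_1,\ldots,v_n)\big].
\end{equation*}
In the first summand, subtracting the linear part $(u_1,\ldots,u_n)\mapsto\varphi^{(n+1)}(v_{n+1})(u_1,\ldots,u_n,h_{n+1})$ of $\varphi^{(n)}(v_{n+1}+h_{n+1})-\varphi^{(n)}(v_{n+1})$ leaves, by (b), a multilinear form of $L^{(n)}(V,W)$-norm $o(\|h_{n+1}\|_V)$, which applied to $(v_1+h_1,\ldots,v_n+h_n)$ is still $o(\|\mathbf h\|)$ because $\smallprod_{i=1}^n\|v_i+h_i\|_V$ stays bounded; and $\varphi^{(n+1)}(v_{n+1})(v_1+h_1,\ldots,v_n+h_n,h_{n+1})$, expanded multilinearly in its first $n$ slots, equals $\varphi^{(n+1)}(v_{n+1})(v_1,\ldots,v_n,h_{n+1})$ plus finitely many terms each containing at least two of the $h_j$'s, i.e.\ of order $O(\|\mathbf h\|^2)$. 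In the second summand, multilinearity of $\varphi^{(n)}(v_{n+1})$ yields $\smallsum_{i=1}^n\varphi^{(n)}(v_{n+1})(v_1,\ldots,v_{i-1},h_i,v_{i+1},\ldots,v_n)$ plus, again, finitely many terms with at least two $h_j$'s, of order $O(\|\mathbf h\|^2)$. Since $\|\mathbf h\|^2=o(\|\mathbf h\|)$, adding everything gives $\Phi(\mathbf v+\mathbf h)-\Phi(\mathbf v)-\Psi(\mathbf v)\mathbf h=o(\|\mathbf h\|)$, whence $\Phi$ is Fr\'echet differentiable at $\mathbf v$ with $\Phi'(\mathbf v)=\Psi(\mathbf v)$, which is \eqref{eq:total.derivative}.

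Finally I would show that $V^{n+1}\ni\mathbf v\mapsto\Phi'(\mathbf v)\in L(V^{n+1},W)$ is continuous. Each of the $n+1$ terms on the right-hand side of \eqref{eq:total.derivative}, viewed as a function of $\mathbf v$, is obtained by composing the continuous maps $\varphi^{(n)}$ and $\varphi^{(n+1)}$ (continuous since $\varphi\in\Cp{n+1}{}(V,W)$) with the evaluation of a bounded multilinear form, which is jointly continuous and, on bounded sets, Lipschitz in each argument; adding and subtracting and using that $\|v_i\|_V$ is controlled near a fixed $\mathbf v$ gives $\limsup_{\mathbf w\to\mathbf v}\|\Phi'(\mathbf w)-\Phi'(\mathbf v)\|_{L(V^{n+1},W)}=0$. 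Hence $\Phi'\in\Cp{}{}(V^{n+1},L(V^{n+1},W))$ and therefore $\Phi\in\Cp{1}{}(V^{n+1},W)$, completing the proof. The only point needing care is the convention for higher-order derivatives in (b) and the organization of the multilinear expansions so that exactly the first-order-in-$\mathbf h$ terms are retained while all cross terms are absorbed into $O(\|\mathbf h\|^2)$; there is no genuine analytic difficulty. Alternatively, \eqref{eq:total.derivative} can be read off by writing $\Phi$ as the composition of the bounded $(n{+}1)$-multilinear evaluation $L^{(n)}(V,W)\times V^n\ni(\Lambda,u_1,\ldots,u_n)\mapsto\Lambda(u_1,\ldots,u_n)\in W$ with $\mathbf v\mapsto(\varphi^{(n)}(v_{n+1}),v_1,\ldots,v_n)$ and invoking the chain rule together with the product rule for multilinear maps.
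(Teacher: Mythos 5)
Your argument is correct, but your primary route differs from the paper's. The paper factors $\Phi$ as a composition $\beta\circ P$ with $P(v_1,\ldots,v_n,v)=(\varphi^{(n)}(v),v_1,\ldots,v_n)$ and $\beta(A,v_1,\ldots,v_n)=A(v_1,\ldots,v_n)$, computes $P'$ from the differentiability of $\varphi^{(n)}$ and $\beta'$ from the standard product-rule formula for continuous multilinear maps (citing Theorem~3.7 in Coleman), and then applies the chain rule; the identity \eqref{eq:total.derivative} drops out in one line. You instead verify the Fr\'echet derivative by hand: you split the increment $\Phi(\mathbf v+\mathbf h)-\Phi(\mathbf v)$ into the part coming from perturbing the base point $v_{n+1}$ and the part coming from perturbing the multilinear arguments, extract the first-order terms, and absorb all cross terms (those containing at least two components of $\mathbf h$) into $O(\|\mathbf h\|^2)$. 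This is sound -- the decomposition telescopes correctly, the remainder from differentiating $\varphi^{(n)}$ at $v_{n+1}$ is $o(\|h_{n+1}\|_V)$ uniformly over the bounded arguments $(v_1+h_1,\ldots,v_n+h_n)$, and your continuity argument for $\mathbf v\mapsto\Phi'(\mathbf v)$ via the usual add-and-subtract estimates for bounded multilinear forms is standard. What the paper's factorization buys is that the entire bookkeeping of cross terms is delegated to the general lemma on derivatives of multilinear maps, so no expansion needs to be tracked; what your direct computation buys is self-containedness, avoiding the external reference. You in fact note the composition route as an alternative in your closing sentence -- that alternative is precisely the paper's proof. The one point you rightly flag, the convention that differentiating $x\mapsto\varphi^{(n)}(x)(u_1,\ldots,u_n)$ appends the new direction in the last slot, is consistent with how the paper uses $\varphi^{(n+1)}$ here and in Lemma~\ref{lem:partial.diff}, so no issue arises.
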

\begin{proof}
Throughout this proof let 
$ P \colon V^{n+1} \to L^{(n)}(V,W) \times V^n $, 
$ \beta \colon L^{(n)}(V,W) \times V^n \to W $, 
and 
$ \phi \colon V^2 \to L^{(n)}(V,W) $ 
be the functions which satisfy for all 
$ A \in L^{(n)}(V,W) $, 
$ \mathbf{v}=(v_1,v_2,\ldots,v_n) \in V^n $, 
$ v, h \in V $ 
that 
\begin{equation}
\begin{split}
&
  P( v_1,v_2,\ldots,v_n, v )
  =
  ( \varphi^{(n)}(v), \mathbf{v} )
  ,
  \qquad
  \beta( A, \mathbf{v} )
  =
  A(v_1,v_2,\ldots,v_n)
  ,
\end{split}
\end{equation}
and
\begin{equation}
\begin{split}
\phi(v,h)\mathbf{v}
=
\varphi^{(n+1)}(v)(v_1,v_2,\ldots,v_n,h)
.
\end{split}
\end{equation}
We note that for all 
$ \mathbf{v}=(v_1,v_2,\ldots,v_n) \in V^n $, 
$ v \in V $ 
it holds that 
\begin{equation}
\label{eq:composition}
  \Phi(v_1,v_2,\ldots,v_n,v)
  =
  \beta(P(v_1,v_2,\ldots,v_n, v))
  .
\end{equation}
Furthermore, observe that the assumption that $ \varphi \in \Cp{n+1}{}(V,W) $ 
ensures that 
for all 
$ \mathbf{v}=(v_1,v_2,\ldots,v_n) $, $ \mathbf{h}=(h_1,h_2,\ldots,h_n) \in V^n $, 
$ v, h \in V $ 
it holds that 
$ P \in \Cp{1}{}( V^{n+1}, L^{(n)}(V,W) \times V^n ) $
and 
\begin{equation}
\label{eq:compose.derivative.1}
  P'(v_1,v_2,\ldots,v_n, v)(h_1,h_2,\ldots,h_n,h)
  =
  (\phi(v,h),\mathbf{h})
  .
\end{equation}
Moreover, the fact that 
$\beta$ is an $(n+1)$-multilinear and continuous function
and, e.g., Theorem~3.7 in Coleman~\cite{Coleman2012} 
assure that for all 
$ A, \tilde{A} \in L^{(n)}(V,W) $, 
$ \mathbf{v}=(v_1,v_2,\ldots,v_n) $, $ \mathbf{h}=(h_1,h_2,\ldots,h_n) \in V^n $ 
it holds that 
$ \beta \in \Cp{1}{}(L^{(n)}(V,W) \times V^n, W) $ 
and 
\begin{equation}
\label{eq:compose.derivative.2}
  \beta'(A,\mathbf{v})(\tilde{A},\mathbf{h})
  =
  \tilde{A}(v_1,v_2,\ldots,v_n)
  +
  \sum^n_{i=1}
  A(v_1,v_2,\ldots,v_{i-1},h_i,v_{i+1},v_{i+2},\ldots,v_n)
  .
\end{equation}
Combining~\eqref{eq:composition}--\eqref{eq:compose.derivative.2} with 
the chain rule yields that for all 
$ \mathbf{v}=(v_1,v_2,\ldots,v_n) $, 
$ \mathbf{h}=(h_1,h_2,\ldots,h_n) 
$
$
\in V^n $, 
$ v, h \in V $ 
it holds that 
$ \Phi \in \Cp{1}{}(V^{n+1},W) $
and 
\begin{equation}
\begin{split}
&
  \Phi'(v_1,v_2,\ldots,v_n,v)(h_1,h_2,\ldots,h_n,h)
\\&=
  \beta'(P(v_1,v_2,\ldots,v_n, v))P'(v_1,v_2,\ldots,v_n, v)(h_1,h_2,\ldots,h_n,h)
\\&=
  \beta'(P(v_1,v_2,\ldots,v_n, v))(\phi(v,h),\mathbf{h})
\\&=
  \beta'(\varphi^{(n)}(v),\mathbf{v})(\phi(v,h),\mathbf{h})
\\&=
  \phi(v,h)\mathbf{v}
  +
  \sum^n_{i=1}
  \varphi^{(n)}(v)(v_1,v_2,\ldots,v_{i-1},h_i,v_{i+1},v_{i+2},\ldots,v_n)
\\&=
  \varphi^{(n+1)}(v)(v_1,v_2,\ldots,v_n,h)
  +
  \sum^n_{i=1}
  \varphi^{(n)}(v)(v_1,v_2,\ldots,v_{i-1},h_i,v_{i+1},v_{i+2},\ldots,v_n)
  .
\end{split}
\end{equation}
This implies~\eqref{eq:total.derivative}. 
The proof of Lemma~\ref{lem:test.derivative} is thus completed.
\end{proof}
\begin{lemma}
\label{lem:derivative_formulas}
Assume the setting in Section~\ref{sec:global_setting}, 
let $ n \in \N $,
$ \varphi \in \Cb{n}(H,V) $, 
$ F \in \Cb{n}(H,H) $,
$ B \in \Cb{n}(H,HS(U,H)) $, 
let 
$
  X^{ k,\mathbf{u} }
  \colon
  [ 0 , T ] \times \Omega
  \to H
$, 
$
  \mathbf{u} \in H^{k+1}
$, 
$
  k \in \{ 0, 1, \dots, n \}
$,
be 
$(\mathcal{F}_t)_{t\in[0,T]}$/$ \mathcal{B}(H) $-predictable stochastic processes
which satisfy
for all
$
  k \in \{ 0, 1, \dots, n \}
$,
$
  \mathbf{u} = (u_0,u_1,\ldots,u_k) \in H^{k+1}
$, 
$ p \in (0,\infty) $,
$ t \in [0,T] $
that
$
  \sup_{s\in[0,T]}
  \E\big[\|X^{k,\mathbf{u}}_s\|^p_H\big]
  < \infty
$ 
and 
\begin{equation}
\label{eq:SEE.derivative}
\begin{split}
&
  [
  X_t^{k,\mathbf{u}}
  -
  e^{tA}
  \, \mathbbm{1}_{ \{ 0, 1 \} }(k) \, u_k  
  ]_{ \P,\mathcal{B}(H) }
\\ &
  =
  \int_0^t
    e^{ ( t - s ) A }
    \Bigg[
      \mathbbm{1}_{ \{ 0 \} }(k)
      \,
      F(X_s^{0,u_0})
\\&\quad+
      \sum_{ \varpi\in \Pi_k }
      F^{ ( \#_\varpi ) }( X_s^{ 0,u_0 } )
      \big(
        X_s^{ \#_{I^\varpi_1}, [ \mathbf{u} ]_1^{ \varpi } }
        ,
        X_s^{ \#_{I^\varpi_2}, [ \mathbf{u} ]_2^{ \varpi } }
        ,
        \dots
        ,
        X_s^{ \#_{I^\varpi_{\#_\varpi}}, [\mathbf{u} ]_{ \#_\varpi }^{ \varpi } }
      \big)
    \Bigg]
  \,{\bf ds}
\\ &
  +
  \int_0^t
    e^{ ( t - s ) A }
    \Bigg[
      \mathbbm{1}_{ \{ 0 \} }(k)
      \,
      B(X_s^{0,u_0})
\\&\quad+
      \sum_{ \varpi\in \Pi_k }
      B^{ ( \#_\varpi ) }( X_s^{ 0,u_0 } )
      \big(
        X_s^{ \#_{I^\varpi_1}, [ \mathbf{u} ]_1^{ \varpi } }
        ,
        X_s^{ \#_{I^\varpi_2}, [ \mathbf{u} ]_2^{ \varpi } }
        ,
        \dots
        ,
        X_s^{ \#_{I^\varpi_{\#_\varpi}}, [\mathbf{u} ]_{ \#_\varpi }^{ \varpi } }
      \big)
    \Bigg]
  \, \diffns W_s
  ,
\end{split}
\end{equation}
and let 
$
  \trans \colon [0,T] \times H \to V
$ 
be the function which satisfies for all 
$ t \in [0,T] $, 
$ x \in H $ 
that 
$
  \trans(t,x)
  =
  \E[\varphi(X^{0,x}_t)]
$.
Then 
\begin{enumerate}[(i)]
\item
\label{item:trans.derivative.integrability}
it holds for all 
$ k \in \{1,2,\ldots,n\} $, 
$
  \mathbf{u} = (u_0 , u_1 , \dots , u_k) \in
  H^{k+1}
$, 
$ t \in [ 0 , T ] $ 
that 
\begin{equation}
  \sum\limits_{
    \varpi \in \Pi_k
  }
  \E\Big[\big\|
    \varphi^{(\#_\varpi)}(X_t^{0,u_0})
    \big(
        X_t^{ \#_{I^\varpi_1}, [ \mathbf{u} ]_1^{ \varpi } }
        ,
        X_t^{ \#_{I^\varpi_2}, [ \mathbf{u} ]_2^{ \varpi } }
        ,
        \dots
        ,
        X_t^{ \#_{I^\varpi_{\#_\varpi}}, [\mathbf{u} ]_{ \#_\varpi }^{ \varpi } }
    \big)
  \big\|_V\Big]
  < \infty, 
\end{equation}
\item
\label{item:trans.smoothness}
it holds for all 
$ t \in [ 0 , T ] $ 
that 
$ \big( H \ni x \mapsto \trans(t,x) \in V \big) \in \Cb{n}(H,V) $,  
\item
\label{item:thm.representation}
it holds for all 
$ k \in \{1,2,\ldots,n\} $, 
$
  \mathbf{u} \in
  H^k
$, 
$ x \in H $, 
$ t \in [ 0 , T ] $ 
that 
\begin{equation}
\label{eq:derivative_formulas}
\begin{split}
&
    \big(
    \tfrac{\partial^k}{\partial x^k}
    \trans
    \big)(t,x)
  \mathbf{u}
\\&=
  \sum\limits_{
    \varpi \in \Pi_k
  }
  \E
  \Big[
    \varphi^{(\#_\varpi)}(X_t^{0,x})
    \big(
      X_t^{\#_{I^\varpi_1},[(x,\mathbf{u})]_1^{\varpi}}
      ,
      X_t^{\#_{I^\varpi_2},[(x,\mathbf{u})]_2^{\varpi}}
      ,
      \dots
      ,
      X_t^{\#_{I^\varpi_{\#_\varpi}},[(x,\mathbf{u})]_{\#_\varpi}^\varpi}
    \big)
  \Big]
  ,
\end{split}
\end{equation}
\item
\label{item:lem.derivative.apriori}
it holds for all 
$ p \in (0,\infty) $, 
$ k \in \{1,2,\ldots,n\} $, 
$
\boldsymbol{\delta}=(\delta_1,\delta_2,\dots,\delta_k)\in[0,\nicefrac{1}{2})^k
$, 
$ \alpha \in [0,1) $, 
$ \beta \in [0,\nicefrac{1}{2}) $ 
with 
$
\sum_{i=1}^k\delta_i
< \nicefrac{1}{2}
$ 
that 
\begin{equation}
  \sup_{x\in H}
  \sup_{\mathbf{u}=(u_1,u_2,\ldots,u_k)\in(\nzspace{H})^k}
  \sup_{t\in(0,T]}
  \left[
    \frac{
    	t^{\iota^{\boldsymbol{\delta},\alpha,\beta}_\N}
    	\|X^{k,(x,\mathbf{u})}_t\|_{\lpn{p}{\P}{H}}
    	}{
    	  \prod^k_{i=1} \|u_i\|_{H_{-\delta_i}}
    	}
  \right]
  < \infty,
\end{equation}
\item
\label{item:thm.derivative.bound}
it holds for all 
$ k \in \{1,2,\ldots,n\} $, 
$
  \boldsymbol{\delta}=(\delta_1,\delta_2,\dots,\delta_k)\in[0,\nicefrac{1}{2})^k
$, 
$ \alpha \in [0,1) $, 
$ \beta \in [0,\nicefrac{1}{2}) $ 
with 
$
  \sum_{i=1}^k\delta_i
  < \nicefrac{1}{2}
$ 
that 
\begin{equation}
\label{eq:bound_derivative_formulas}
\begin{split}
&
  \sup_{ v \in H }
  \sup_{
    \mathbf{u}=(u_1, u_2, \dots, u_k) \in 
    (\nzspace{H})^k
  }
  \sup_{
    t \in (0,T]
  }
  \Bigg[
  \frac{
    t^{\sum^k_{i=1} \delta_i}
    \,
    \big\|
    \big(
    \tfrac{\partial^k}{\partial x^k}
    \trans
    \big)(t,v)
  \mathbf{u}
    \big\|_V
  }{
    \prod^k_{ i=1 }
    \|
      u_i
    \|_{
      H_{ - \delta_i }
    }
  }
  \Bigg]
\\&\leq
  |T \vee 1|^{
    \lfloor k/2 \rfloor \,
    \min\{1-\alpha,\nicefrac{1}{2}-\beta\}
  } \,
    \|
      \varphi
    \|_{
      \Cb{k}(H,{V})
    }
\\&\cdot
  \sum_{
    \varpi \in \Pi_k
  }
  \prod_{
    I \in \varpi
  }
    \sup_{x\in H}
    \sup_{
      \mathbf{u}=(u_i)_{i\in I}
      \in (\nzspace{H})^{\#_I}
    }
    \sup_{t\in(0,T]}
    \Bigg[
    \frac{
      t^{
      \iota^{\boldsymbol{\delta},\alpha,\beta}_I
      }
      \|
        X_t^{\#_I,(x,\mathbf{u})}
      \|_{\mathcal{L}^{\#_\varpi}(\P;H)}
    }
    {
      \prod_{i\in I}
      \|u_i\|_{H_{-\delta_i}}
    }
  \Bigg]
<\infty,
\end{split}
\end{equation}
\item
\label{item:lem.0.lip}
it holds for all 
$ p \in (0,\infty) $
that
\begin{equation}
\sup_{\substack{x,y\in H,\\ x\neq y}}
\sup_{t\in(0,T]}
\left[
\frac{
	\|X^{0,x}_t-X^{0,y}_t\|_{\lpn{p}{\P}{H}}
}{
\|x-y\|_H
}
\right]
< \infty,
\end{equation}
\item
\label{item:lem.derivative.lip}
it holds for all 
$ p \in (0,\infty) $, 
$ k \in \{1,2,\ldots,n\} $, 
$
\boldsymbol{\delta}=(\delta_1,\delta_2,\dots,\delta_k)\in[0,\nicefrac{1}{2})^k
$, 
$ \alpha \in [0,1) $, 
$ \beta \in [0,\nicefrac{1}{2}) $ 
with 
$
\sum_{i=1}^k\delta_i
< \nicefrac{1}{2}
$ 
and
$
|F|_{\operatorname{Lip}^k(H,H_{-\alpha})}
+
|B|_{\operatorname{Lip}^k(H,HS(U,H_{-\beta}))}
< \infty
$ 
that 
\begin{equation}
\sup_{\substack{x,y\in H,\\ x\neq y}}
\sup_{\mathbf{u}=(u_1,u_2,\ldots,u_k)\in(\nzspace{H})^k}
\sup_{t\in(0,T]}
\left[
\frac{
	t^{\iota^{(\boldsymbol{\delta},0),\alpha,\beta}_\N}
	\|X^{k,(x,\mathbf{u})}_t-X^{k,(y,\mathbf{u})}_t\|_{\lpn{p}{\P}{H}}
}{
\|x-y\|_H
\prod^k_{i=1} \|u_i\|_{H_{-\delta_i}}
}
\right]
< \infty,
\end{equation}
and 
\item
\label{item:trans.lip}
it holds for all 
$ k \in \{1,2,\ldots,n\} $, 
$
  \boldsymbol{\delta}=(\delta_1, \delta_2, \ldots, \delta_k) \in
  [0,\nicefrac{1}{2})^k
$,
$ \alpha \in [0,1) $, 
$ \beta \in [0,\nicefrac{1}{2}) $ 
with 
$
  \sum_{i=1}^k\delta_i
  < \nicefrac{1}{2}
$ 
and 
$
  |F|_{\operatorname{Lip}^k(H,H_{-\alpha})}
  +
  |B|_{\operatorname{Lip}^k(H,HS(U,H_{-\beta}))}
  +
  |\varphi|_{\operatorname{Lip}^k(H,V)} < \infty
$ 
that 
\begin{equation}
\label{eq:trans.lip.statement}
\begin{split}
&
  \sup_{\substack{v, w \in H, \\ v \neq w}}
  \sup_{ \mathbf{u}=(u_1,u_2,\ldots,u_k) \in (\nzspace{H})^k }
  \sup_{t \in (0,T] }
  \left[
  \frac{
    t^{\sum^k_{i=1} \delta_i}
    \big\|
  \big[
    \big(
    \tfrac{\partial^k}{\partial x^k}
    \trans
    \big)(t,v)
  -
    \big(
    \tfrac{\partial^k}{\partial x^k}
    \trans
    \big)(t,w)
  \big]
    \mathbf{u}
    \big\|_V    
  }{
    \|v-w\|_H
    \prod^k_{i=1} \|u_i\|_{H_{-\delta_i}}
  }
  \right]
\\&\leq
    |T\vee 1|^{
      \lceil k/2 \rceil \, 
      \min\{1-\alpha,\nicefrac{1}{2}-\beta\}
    } \,
      \|\varphi\|_{ \operatorname{Lip}^k(H,V) }
\\&\cdot
    \sum_{
      \varpi \in \Pi_k
    }
    \Bigg\{
    \sup_{\substack{x,y \in H, \\ x \neq y}}
    \sup_{ t \in (0,T] }
    \Bigg[
      \frac{\| X_t^{0,x} - X_t^{0,y} \|_{\lpn{\#_\varpi+1}{\P}{H}}}{\|x-y\|_H}
    \Bigg]
\\&\cdot
    \prod_{I\in\varpi} 
    \sup_{ x \in H }
    \sup_{ \mathbf{u}=(u_i)_{i\in I} \in (\nzspace{H})^{\#_I} }
    \sup_{ t \in (0,T] }
    \Bigg[
    \frac{
      t^{ \iota^{\boldsymbol{\delta},\alpha,\beta}_I } \,
      \|X_t^{\#_I,(x,\mathbf{u})}\|_{\lpn{\#_\varpi+1}{\P}{H}}
    }{
      \prod_{i\in I}
      \|u_i\|_{H_{-\delta_i}}
    }
    \Bigg]
\\&+
    \sum_{I\in\varpi}
    \sup_{\substack{x,y \in H, \\ x \neq y}}
    \sup_{ \mathbf{u}=(u_i)_{i\in I} \in (\nzspace{H})^{\#_I} }
    \sup_{ t \in (0,T] }
    \Bigg[
    \frac{
      t^{ \iota^{(\boldsymbol{\delta},0),\alpha,\beta}_{I\cup\{k+1\}} } \,
      \|X_t^{\#_I,(x,\mathbf{u})}-X_t^{\#_I,(y,\mathbf{u})}\|_{\lpn{\#_\varpi}{\P}{H}}
    }{
      \|x-y\|_H
      \prod_{i\in I}
      \|u_i\|_{H_{-\delta_i}}
    }
    \Bigg]
\\&\cdot
    \prod_{ J \in \varpi \setminus \{I\} } 
    \sup_{ x \in H }
    \sup_{ \mathbf{u}=(u_i)_{i\in J} \in (\nzspace{H})^{\#_J} }
    \sup_{ t \in (0,T] }
    \Bigg[
    \frac{
      t^{ \iota^{\boldsymbol{\delta},\alpha,\beta}_J } \,
      \|X_t^{\#_J,(x,\mathbf{u})}\|_{\lpn{\#_\varpi}{\P}{H}}
    }{
      \prod_{i\in J}
      \|u_i\|_{H_{-\delta_i}}
    }
    \Bigg]
    \Bigg\}
    < \infty.
\end{split}
\end{equation}
\end{enumerate}
\end{lemma}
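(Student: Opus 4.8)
The plan is to establish the eight assertions by a triple bootstrap organised by the differentiation order $k\in\{0,1,\ldots,n\}$: first the purely probabilistic bounds for the processes $X^{k,\mathbf{u}}$ (items~\eqref{item:lem.derivative.apriori}, \eqref{item:lem.0.lip} and~\eqref{item:lem.derivative.lip}), then the statements on $\trans$ (items~\eqref{item:trans.derivative.integrability}, \eqref{item:trans.smoothness}, \eqref{item:thm.representation}, \eqref{item:thm.derivative.bound} and~\eqref{item:trans.lip}), the passage between the two levels being carried out by Lemmas~\ref{lem:stoch.chain.rule}, \ref{lem:test.derivative} and~\ref{lem:partial.diff}. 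Two facts are used throughout: the analytic semigroup obeys $\sup_{s\in(0,T]}s^{\gamma}\|e^{sA}\|_{L(H_{-\gamma},H)}<\infty$ for $\gamma\in[0,1)$ and $\sup_{s\in(0,T]}s^{\gamma}\|e^{sA}\|_{L(HS(U,H_{-\gamma}),HS(U,H))}<\infty$ for $\gamma\in[0,\nicefrac{1}{2})$; and in~\eqref{eq:SEE.derivative} the one-block partition $\{\{1,2,\ldots,k\}\}\in\Pi_k$ contributes exactly $F'(X^{0,u_0}_s)X^{k,\mathbf{u}}_s$ and $B'(X^{0,u_0}_s)X^{k,\mathbf{u}}_s$, so that $X^{k,\mathbf{u}}$ solves a \emph{linear} mild equation whose inhomogeneity is indexed by $\Pi_k^{*}$ and hence built only from processes of order strictly less than $k$.

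\emph{Layer~1 (items~\eqref{item:lem.derivative.apriori}, \eqref{item:lem.0.lip}, \eqref{item:lem.derivative.lip}).} Fix $\alpha\in[0,1)$ and $\beta\in[0,\nicefrac{1}{2})$. I would prove items~\eqref{item:lem.0.lip} and~\eqref{item:lem.derivative.apriori} by induction on $k$, together with the auxiliary fact (needed in Layer~2) that $H\ni x\mapsto[X^{0,x}_t]_{\P,\mathcal{B}(H)}\in\lpnb{p}{\P}{H}$ is $n$-times Fr\'{e}chet differentiable with $k$-th derivative $\mathbf{u}\mapsto[X^{k,(x,\mathbf{u})}_t]_{\P,\mathcal{B}(H)}$ (this follows by differentiating~\eqref{eq:SEE.derivative} inductively, matching the differentiated equation with the level-$(k+1)$ instance of~\eqref{eq:SEE.derivative}, and Gr\"{o}nwall bounds on the Taylor remainders, using $F,B\in\Cb{n}$). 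The case $k=0$ of item~\eqref{item:lem.0.lip} is the classical Lipschitz-in-initial-datum estimate for the mild solution, obtained from~\eqref{eq:SEE.derivative} by the Burkholder--Davis--Gundy inequality, H\"{o}lder's inequality, the semigroup bounds above, and a singular Gr\"{o}nwall lemma. For $k\geq1$ one inserts the induction hypothesis $\|X^{\#_I,(x,\mathbf{u}_I)}_s\|_{\lpn{p}{\P}{H}}\lesssim s^{-\iota^{\boldsymbol{\delta},\alpha,\beta}_I}\prod_{i\in I}\|u_i\|_{H_{-\delta_i}}$ (valid for $\#_I<k$ by item~\eqref{item:lem.derivative.apriori} at lower orders, after reindexing the directions) into the $\Pi_k^{*}$-inhomogeneity, uses the embeddings $H\hookrightarrow H_{-\alpha}$, $HS(U,H)\hookrightarrow HS(U,H_{-\beta})$ and the semigroup bounds, and reduces the claim to elementary Beta-function integral estimates of the type $\int_0^t(t-s)^{-\alpha}s^{-\sum_{I\in\varpi}\iota^{\boldsymbol{\delta},\alpha,\beta}_I}\,ds\lesssim t^{-\iota^{\boldsymbol{\delta},\alpha,\beta}_{\N}}$ and $\int_0^t(t-s)^{-2\beta}s^{-2\sum_{I\in\varpi}\iota^{\boldsymbol{\delta},\alpha,\beta}_I}\,ds\lesssim t^{-2\iota^{\boldsymbol{\delta},\alpha,\beta}_{\N}}$, followed by a singular Gr\"{o}nwall lemma absorbing the linear $F'(X^{0})X^{k}$ and $B'(X^{0})X^{k}$ terms. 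Since $\sum_{I\in\varpi}\iota^{\boldsymbol{\delta},\alpha,\beta}_I=\sum_i\delta_i-(\#_{\{I\in\varpi\,\colon\,\#_I\geq2\}})\min\{1-\alpha,\nicefrac{1}{2}-\beta\}$ while $\iota^{\boldsymbol{\delta},\alpha,\beta}_{\N}=\sum_i\delta_i-\min\{1-\alpha,\nicefrac{1}{2}-\beta\}$ for $k\geq2$, these estimates hold precisely because $\min\{1-\alpha,\nicefrac{1}{2}-\beta\}$ is $\leq1-\alpha$ and $\leq\nicefrac{1}{2}-\beta$, and all kernels are integrable thanks to $\sum_i\delta_i<\nicefrac{1}{2}$; this is where the regularity gain originates. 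Item~\eqref{item:lem.derivative.lip} is the same argument applied to the difference of the equations for $X^{k,(x,\mathbf{u})}$ and $X^{k,(y,\mathbf{u})}$, now invoking $|F|_{\operatorname{Lip}^k(H,H_{-\alpha})}+|B|_{\operatorname{Lip}^k(H,HS(U,H_{-\beta}))}<\infty$ together with items~\eqref{item:lem.0.lip} and~\eqref{item:lem.derivative.apriori}.

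\emph{Layer~2 (items~\eqref{item:trans.derivative.integrability}, \eqref{item:trans.smoothness}, \eqref{item:thm.representation}).} By induction on $k\in\{1,\ldots,n\}$ I would show that $\trans(t,\cdot)\in\Cp{k}{}(H,V)$ with $k$-th derivative given by~\eqref{eq:derivative_formulas}. Since the Nemytskii operator $u\mapsto\varphi(u)$ is not smooth on $L^p$-spaces, this must be done via Lemma~\ref{lem:stoch.chain.rule} rather than a naive chain rule: in the step from $k-1$ to $k$ one differentiates in $x$ each summand $x\mapsto\E[\varphi^{(\#_\varpi)}(X^{0,x}_t)(X_t^{\#_{I^\varpi_1},[(x,\mathbf{u})]_1^{\varpi}},\ldots)]$, $\varpi\in\Pi_{k-1}$, by applying Lemma~\ref{lem:stoch.chain.rule} with the role of ``$V$'' played by $H^{\#_\varpi+1}$, the role of ``$X^{0,\cdot}$'' played by the tuple-valued random field $x\mapsto(X^{0,x}_t,X_t^{\#_{I^\varpi_1},[(x,\mathbf{u})]_1^{\varpi}},\ldots)$ — whose differentiability in $x$, with derivative obtained by raising each order by one and appending the new direction, comes from Layer~1 — and the role of ``$\varphi$'' played by the map $\Phi$ of Lemma~\ref{lem:test.derivative} built from $\varphi^{(\#_\varpi)}$ (which requires $\varphi^{(\#_\varpi+1)}$, available since $\#_\varpi+1\leq k\leq n$ and $\varphi\in\Cp{n}{}(H,V)$, and whose polynomial-growth hypothesis on $\Phi'$ is immediate from $\varphi\in\Cb{n}(H,V)$ via~\eqref{eq:total.derivative}). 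By~\eqref{eq:total.derivative} the derivative of each summand splits into one term raising $\varphi^{(\#_\varpi)}$ to $\varphi^{(\#_\varpi+1)}$ and $\#_\varpi$ terms raising the order of a single factor by one; as $\varpi$ ranges over $\Pi_{k-1}$ these terms reassemble, via the standard set-partition recursion (cf.~\cite{AnderssonJentzenKurniawan2016a}), into the sum over $\Pi_k$ in~\eqref{eq:derivative_formulas}, and Lemma~\ref{lem:partial.diff} upgrades the resulting pointwise differentiability to membership in $\Cp{k}{}(H,V)$. Item~\eqref{item:trans.derivative.integrability} is exactly the integrability verified in part~\eqref{item:stoch.derivative.integrability} of Lemma~\ref{lem:stoch.chain.rule}, using boundedness of $\varphi^{(\#_\varpi)}$ and $\sup_{s\in[0,T]}\E[\|X^{k,\mathbf{u}}_s\|_H^p]<\infty$; that $\trans(t,\cdot)$ even lies in $\Cb{n}(H,V)$ follows from item~\eqref{item:thm.derivative.bound} with $\boldsymbol{\delta}=0$.

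\emph{Layer~3 (items~\eqref{item:thm.derivative.bound}, \eqref{item:trans.lip}), and the main obstacle.} For item~\eqref{item:thm.derivative.bound} one inserts~\eqref{eq:derivative_formulas} into $\|(\tfrac{\partial^k}{\partial x^k}\trans)(t,v)\mathbf{u}\|_V$, bounds each summand by $|\varphi|_{\Cb{\#_\varpi}(H,V)}\prod_{I\in\varpi}\|X_t^{\#_I,(v,\mathbf{u}_I)}\|_{\lpn{\#_\varpi}{\P}{H}}$ by H\"{o}lder's inequality with $\#_\varpi$ factors, applies item~\eqref{item:lem.derivative.apriori} block by block, and absorbs the gained non-negative power of $t$ on $(0,T]$ into $|T\vee1|^{\lfloor k/2\rfloor\min\{1-\alpha,\nicefrac{1}{2}-\beta\}}$, using that a partition of a $k$-element set has at most $\lfloor k/2\rfloor$ blocks of cardinality $\geq2$ and that $|\varphi|_{\Cb{\#_\varpi}(H,V)}\leq\|\varphi\|_{\Cb{k}(H,V)}$. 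Item~\eqref{item:trans.lip} is the same computation applied to the difference of the representations~\eqref{eq:derivative_formulas} at $v$ and at $w$: each summand's difference telescopes into one term in which $\varphi^{(\#_\varpi)}$ is evaluated along $X^{0,v}_t$ versus $X^{0,w}_t$ — controlled by $|\varphi|_{\operatorname{Lip}^{\#_\varpi}(H,V)}$, item~\eqref{item:lem.0.lip} and item~\eqref{item:lem.derivative.apriori} — and, for each block $I\in\varpi$, one term in which the single factor $X_t^{\#_I,(v,\mathbf{u}_I)}$ is replaced by $X_t^{\#_I,(w,\mathbf{u}_I)}$ — controlled by item~\eqref{item:lem.derivative.lip} and item~\eqref{item:lem.derivative.apriori}, with this factor now carrying the weight $t^{\iota^{(\boldsymbol{\delta},0),\alpha,\beta}_{I\cup\{k+1\}}}$, which treats $I$ as having $\#_I+1$ elements and so can promote one further block to cardinality $\geq2$, whence $\lceil k/2\rceil$ replaces $\lfloor k/2\rfloor$; here one also uses $\sup_{x\in H}\|\varphi^{(\#_\varpi)}(x)\|_{L^{(\#_\varpi)}(H,V)}=|\varphi|_{\operatorname{Lip}^{\#_\varpi-1}(H,V)}\leq\|\varphi\|_{\operatorname{Lip}^k(H,V)}$. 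The genuine obstacle is Layer~1, and within it the singular-kernel bookkeeping: one must arrange the Beta-function exponents so that the Gr\"{o}nwall argument closes with \emph{exactly} the exponent $\iota^{\boldsymbol{\delta},\alpha,\beta}_{\N}$, simultaneously for the drift part (which loses $\alpha$ to the semigroup and regains $1-\alpha$) and for the stochastic part (which loses $\beta$ plus a further $\nicefrac{1}{2}$ through Burkholder--Davis--Gundy and regains $\nicefrac{1}{2}-\beta$), uniformly over all $\varpi\in\Pi_k$ and all admissible $(\boldsymbol{\delta},\alpha,\beta)$ with $\sum_i\delta_i<\nicefrac{1}{2}$; once Layer~1 is in hand, everything else is essentially bookkeeping, apart from the care needed to invoke Lemmas~\ref{lem:stoch.chain.rule} and~\ref{lem:test.derivative} legitimately at each stage of Layer~2.
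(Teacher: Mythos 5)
Your proposal is correct in outline, and for the parts of the argument that this paper actually carries out it coincides with the paper's proof: your Layer~2 is exactly the paper's induction on $k$, which differentiates each summand of the representation via Lemma~\ref{lem:stoch.chain.rule} applied to the tuple-valued random field $x\mapsto(X_t^{\#_{I^\varpi_1},[(x,\mathbf{u})]_1^{\varpi}},\ldots,X_t^{0,x})$ with the test function of Lemma~\ref{lem:test.derivative}, reassembles the resulting terms over $\Pi_{k+1}$ via the set-partition recursion, and upgrades to $\Cp{k+1}{}$ via Lemma~\ref{lem:partial.diff}; your Layer~3 is the paper's H\"{o}lder-plus-exponent-bookkeeping argument, including the $\lfloor k/2\rfloor$ versus $\lceil k/2\rceil$ distinction. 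The genuine divergence is your Layer~1: you propose to prove the a priori and Lipschitz estimates for the derivative processes $X^{k,\mathbf{u}}$ from scratch (linear mild equation with $\Pi_k^{*}$-inhomogeneity, Burkholder--Davis--Gundy, Beta-function convolution estimates, singular Gr\"{o}nwall), whereas the paper imports all of items~\eqref{item:lem.derivative.apriori}, \eqref{item:lem.0.lip}, and~\eqref{item:lem.derivative.lip} --- as well as the $L^p$-differentiability of $x\mapsto X^{0,x}_t$ that your Layer~2 needs --- wholesale from Theorem~2.1 of the companion paper \cite{AnderssonJentzenKurniawan2016a} and Corollary~2.10 of \cite{AnderssonJentzenKurniawan2016arXiv}, and only applies Jensen's inequality to extend them to $p\in(0,2)$. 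So what you identify as ``the genuine obstacle'' is real mathematics, but it is precisely the content of the cited external theorem rather than of this lemma's proof; conversely, the one piece of work internal to this paper that your sketch passes over rather quickly is the verification that the candidate $k$-th derivative is continuous in $x$ as an $L^{(k)}(H,V)$-valued map (the paper's~\eqref{eq:trans.derivative.continuity}, proved via a uniform-integrability/convergence-in-probability argument for $\varphi^{(\#_\varpi)}(X^{0,y}_t)$), which is the hypothesis that makes the appeal to Lemma~\ref{lem:partial.diff} legitimate. Your route is self-contained but substantially longer; the paper's buys brevity at the cost of depending on the companion results.
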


\begin{proof}
Throughout this proof 
let 
$ \alpha \in [0,1) $, 
$ \beta \in [0,\nicefrac{1}{2}) $ 
and let $ \deltaset{k} \in \mathcal{P}(\R^k) $, 
$ k \in \N $, 
be the sets which satisfy for all 
$ k \in \N $ 
that 
$
  \deltaset{k}=
  \{ 
    (\delta_1,\delta_2,\ldots,\delta_k) \in [0,\nicefrac{1}{2})^k \colon 
    \sum^k_{i=1} \delta_i < \nicefrac{1}{2}
  \}
$. 
Note that H\"{o}lder's inequality shows that for all 
$ k \in \{1,2,\ldots,n\} $, 
$ \mathbf{u} = (u_0,u_1,\ldots,u_k) \in H^{k+1} $, 
$ t \in [0,T] $ 
it holds that 
\begin{equation}
\label{eq:basic.hoelder}
\begin{split}
&
    \sum_{ \varpi \in \Pi_k }
    \E
    \Big[
      \big\|
      \varphi^{(\#_\varpi)}(X_t^{0,u_0})
      \big(
        X_t^{\#_{I^\varpi_1}, [\mathbf{u}]_1^{\varpi}}
        ,
        X_t^{\#_{I^\varpi_2}, [\mathbf{u}]_2^{\varpi}}
        ,
        \dots
        ,
        X_t^{\#_{I^\varpi_{\#_\varpi}}, [\mathbf{u}]_{\#_{\varpi}}^\varpi}
      \big)
      \big\|_V
    \Big]
\\&\leq
  \sum_{
    \varpi \in \Pi_k
  }
  |\varphi|_{\Cb{\#_\varpi}(H,V)}
  \prod^{\#_\varpi}_{i=1}
    \|X^{\#_{I^\varpi_i}, [\mathbf{u}]^\varpi_i}_t\|_{\lpn{\#_\varpi}{\P}{H}}
    .
\end{split}
\end{equation}
This, 
the assumption that 
$ \varphi \in \Cb{n}(H,V) $, 
and the assumption that 
$
  \forall \, k \in \{1,2,\ldots,n\},\,
  \mathbf{u} \in H^{k+1}, \,
  p \in (0,\infty)
  \colon
  \sup_{ t \in [0,T] }
  \E\big[\|X^{k,\mathbf{u}}_t\|^p_H\big] 
  < \infty
$ 
establish item~\eqref{item:trans.derivative.integrability}. 
Moreover, note that~\eqref{eq:basic.hoelder} implies that for all 
$ k \in \{1,2,\ldots,n\} $, 
$ \delta_1, \delta_2, \ldots, \delta_k \in [0,\infty) $, 
$ \mathbf{u} = (u_1,u_2,\ldots,u_k) \in (\nzspace{H})^k $, 
$ x \in H $, 
$ t \in [0,T] $ 
it holds that 
\begin{equation}
\label{eq:trans.holder.bound}
\begin{split}
&
  \frac{
    1
  }{
    \prod^k_{i=1} \|u_i\|_{ H_{-\delta_i} }
  } \,
    \sum_{
      \varpi \in \Pi_k
    }
    \E
    \Big[
      \big\|
      \varphi^{(\#_\varpi)}(X_t^{0,x})
      \big(
        X_t^{\#_{I^\varpi_1}, [(x,\mathbf{u})]_1^{\varpi}}
        ,
        X_t^{\#_{I^\varpi_2}, [(x,\mathbf{u})]_2^{\varpi}}
        ,
        \dots
        ,
        X_t^{\#_{I^\varpi_{\#_\varpi}}, [(x,\mathbf{u})]_{\#_{\varpi}}^\varpi}
      \big)
      \big\|_V
    \Big]
\\&\leq
  \sum_{
    \varpi \in \Pi_k
  }
  |\varphi|_{\Cb{\#_\varpi}(H,V)}
  \prod^{\#_\varpi}_{i=1}
  \frac{
    \|X^{\#_{I^\varpi_i}, [(x,\mathbf{u})]^\varpi_i}_t\|_{\lpn{\#_\varpi}{\P}{H}}
  }{
    \prod^{\#_{I^\varpi_i}}_{j=1} 
    \|u_{I^\varpi_{i,j}}\|_{ H_{-\delta_{I^\varpi_{i,j}}} }
  }.
\end{split}
\end{equation}
In addition, \eqref{eq:SEE.derivative} and item~(ii) of Theorem~2.1 
in~\cite{AnderssonJentzenKurniawan2016a} 
(with 
$ T = T $, 
$ \eta = \eta $, 
$ H = H $, 
$ U = U $, 
$ W = W $, 
$ A = A $, 
$ n = n $, 
$ F = F $, 
$ B = B $, 
$ \alpha = 0 $, 
$ \beta = 0 $, 
$ k = k $, 
$ p = p $, 
$ \boldsymbol{\delta} = (0,0,\ldots,0) \in \R^k $ 
for 
$ p \in [2,\infty) $, 
$ k \in \{1,2,\ldots,n\} $
in the notation of Theorem~2.1 
in~\cite{AnderssonJentzenKurniawan2016a})
ensure that for all 
$ k \in \{1,2,\ldots,n\} $, 
$ p \in [2,\infty) $, 
$ t \in [0,T] $
it holds that 
\begin{equation}
\label{eq:itemii}
  \sup_{ \mathbf{u} = (u_0,u_1,\ldots,u_k) \in H \times (\nzspace{H})^k }
  \Bigg[
  \frac{
    \|X^{k,\mathbf{u}}_t\|_{\lpn{p}{\P}{H}}
  }{
    \prod^k_{ i=1 }
    \|u_i\|_H
  }
  \Bigg]
  < \infty.
\end{equation}
This, Jensen's inequality, and~\eqref{eq:trans.holder.bound} 
(with 
$ k = k $, 
$ \delta_1 = 0 $,
$ \delta_2 = 0, \ldots$, 
$ \delta_k = 0 $ 
for $ k \in \{1,2,\ldots,n\} $
in the notation of~\eqref{eq:trans.holder.bound})
imply that for all 
$ k \in \{1,2,\ldots,n\} $, 
$ t \in [0,T] $ 
it holds that 
\begin{equation}
\label{eq:trans.derivative.holder}
\begin{split}
&
  \sup_{ x \in H }
  \sup_{ \mathbf{u}=(u_1,u_2,\ldots,u_k) \in (\nzspace{H})^k }
  \bigg(
  \tfrac{
    1
  }{
    \prod^k_{i=1} \|u_i\|_H
  }
\\&\cdot
    \smallsum\limits_{
      \varpi \in \Pi_k
    }
    \E
    \Big[
      \big\|
      \varphi^{(\#_\varpi)}(X_t^{0,x})
      \big(
        X_t^{\#_{I^\varpi_1}, [(x,\mathbf{u})]_1^{\varpi}}
        ,
        X_t^{\#_{I^\varpi_2}, [(x,\mathbf{u})]_2^{\varpi}}
        ,
        \dots
        ,
        X_t^{\#_{I^\varpi_{\#_\varpi}}, [(x,\mathbf{u})]_{\#_{\varpi}}^\varpi}
      \big)
      \big\|_V
    \Big]
  \bigg)
\\&\leq
  {\sum_{
    \varpi \in \Pi_k
  }}
  |\varphi|_{\Cb{\#_\varpi}(H,V)}
  {\prod_{ I \in \varpi }}
  \sup_{ x \in H }
  \sup_{ \mathbf{u} = (u_i)_{i\in I} \in (\nzspace{H})^{\#_I} }
  \Bigg(
  \frac{
    \|X^{\#_I,(x,\mathbf{u})}_t\|_{\lpn{\#_\varpi}{\P}{H}}
  }{
    \prod_{ i \in I }
    \|u_i\|_H
  }
  \Bigg)
  < \infty.
\end{split}
\end{equation}
Furthermore, \eqref{eq:SEE.derivative} and item~(iii) of Theorem~2.1 in~\cite{AnderssonJentzenKurniawan2016a} 
(with 
$ T = T $, 
$ \eta = \eta $, 
$ H = H $, 
$ U = U $, 
$ W = W $, 
$ A = A $, 
$ n = n $, 
$ F = F $, 
$ B = B $, 
$ \alpha = 0 $, 
$ \beta = 0 $, 
$ k = k $, 
$ p = p $, 
$ x = x $
for 
$ x \in H $, 
$ p \in [2,\infty) $, 
$ k \in \{1,2,\ldots,n\} $
in the notation of Theorem~2.1 
in~\cite{AnderssonJentzenKurniawan2016a}) 
assure that for all 
$ k \in \{1,2,\ldots,n\} $, 
$ p \in [2,\infty) $, 
$ x \in H $, 
$ t \in [0,T] $ 
it holds that 
\begin{equation}
\label{eq:klinear}
  \big(
    H^k \ni \mathbf{u} \mapsto
    [ X^{k,(x,\mathbf{u})}_t ]_{ \P, \mathcal{B}(H) } \in \lpnb{p}{\P}{H}
  \big)
  \in
  L^{(k)}( H, \lpnb{p}{\P}{H} )
  .
\end{equation}
This and~\eqref{eq:trans.derivative.holder} establish that for all 
$ k \in \{1,2,\ldots,n\} $, 
$ x \in H $, 
$ t \in [0,T] $ 
it holds that 
\begin{multline}
\label{eq:phi.klinear}
    \Big(
      H^k
      \ni \mathbf{u} \mapsto
  \smallsum_{
    \varpi \in \Pi_k
  }
  \E
  \Big[
    \varphi^{(\#_\varpi)}(X_t^{0,x})
    \big(
        X_t^{\#_{I^\varpi_1}, [(x,\mathbf{u})]_1^{\varpi}}
        ,
        X_t^{\#_{I^\varpi_2}, [(x,\mathbf{u})]_2^{\varpi}}
        ,
        \dots
        ,
        X_t^{\#_{I^\varpi_{\#_\varpi}}, [(x,\mathbf{u})]_{\#_{\varpi}}^\varpi}
    \big)
  \Big]
  \in V
    \Big)
\\
    \in L^{(k)}(H,V)
    .
\end{multline}
In the next step we prove that for all 
$ k \in \{1,2,\ldots,n\} $, 
$ t \in [0,T] $ 
it holds that 
\begin{multline}
\label{eq:trans.derivative.continuity}
  \Big(
    H \ni x \mapsto
    \Big(
      H^k
      \ni \mathbf{u} \mapsto
  \smallsum_{
    \varpi \in \Pi_k
  }
  \E
  \Big[
    \varphi^{(\#_\varpi)}(X_t^{0,x})
    \big(
        X_t^{\#_{I^\varpi_1}, [(x,\mathbf{u})]_1^{\varpi}}
        ,
        X_t^{\#_{I^\varpi_2}, [(x,\mathbf{u})]_2^{\varpi}}
        ,
        \dots
        ,
\\
        X_t^{\#_{I^\varpi_{\#_\varpi}}, [(x,\mathbf{u})]_{\#_{\varpi}}^\varpi}
    \big)
  \Big]
  \in V
    \Big)
    \in L^{(k)}(H,V)
  \Big)
  \in \Cp{}{}(H,L^{(k)}(H,V))
  .
\end{multline}
For this we observe that the triangle inequality and H\"{o}lder's inequality  
imply that for all 
$ k \in \{1,2,\ldots,n\} $, 
$ \delta_1, \delta_2, \ldots, \delta_k \in [0,\infty) $, 
$ \mathbf{u}=(u_1,u_2,\ldots,u_k) \in (\nzspace{H})^k $, 
$ x, y \in H $, 
$ t \in [ 0 , T ] $ 
it holds that 
\begin{equation}
\label{eq:trans.continuity.holder}
\begin{split}
&
    \tfrac{1}{
      \prod^k_{ i=1 } \|u_i\|_{ H_{-\delta_i} }
    } \,
      \bigg\|
    \smallsum\limits_{
      \varpi \in \Pi_k
    }
    \E
    \Big[
      \varphi^{(\#_\varpi)}(X_t^{0,x})
      \big(
        X_t^{\#_{I^\varpi_1}, [(x,\mathbf{u})]_1^{\varpi}}
        ,
        X_t^{\#_{I^\varpi_2}, [(x,\mathbf{u})]_2^{\varpi}}
        ,
        \dots
        ,
        X_t^{\#_{I^\varpi_{\#_\varpi}}, [(x,\mathbf{u})]_{\#_{\varpi}}^\varpi}
      \big)
\\&\quad-
      \varphi^{(\#_\varpi)}(X_t^{0,y})
      \big(
        X_t^{\#_{I^\varpi_1}, [(y,\mathbf{u})]_1^{\varpi}}
        ,
        X_t^{\#_{I^\varpi_2}, [(y,\mathbf{u})]_2^{\varpi}}
        ,
        \dots
        ,
        X_t^{\#_{I^\varpi_{\#_\varpi}}, [(y,\mathbf{u})]_{\#_{\varpi}}^\varpi}
      \big)
    \Big]
      \bigg\|_V
\\&\leq
    \tfrac{1}{
      \prod^k_{ i=1 } \|u_i\|_{ H_{-\delta_i} }
    }
    \smallsum\limits_{
      \varpi \in \Pi_k
    }
    \Big(
    \E
    \Big[
      \big\|
      \big[\varphi^{(\#_\varpi)}(X_t^{0,x})
      -
      \varphi^{(\#_\varpi)}(X_t^{0,y})\big]
      \big(
        X_t^{\#_{I^\varpi_1}, [(x,\mathbf{u})]_1^{\varpi}}
        ,
        X_t^{\#_{I^\varpi_2}, [(x,\mathbf{u})]_2^{\varpi}}
        ,
        \dots
        ,
\\&\quad
        X_t^{\#_{I^\varpi_{\#_\varpi}}, [(x,\mathbf{u})]_{\#_{\varpi}}^\varpi}
      \big)
      \big\|_V
    \Big]
+
    \E
    \Big[
      \big\|
      \varphi^{(\#_\varpi)}(X_t^{0,y})
      \big(
        X_t^{\#_{I^\varpi_1}, [(x,\mathbf{u})]_1^{\varpi}}
        ,
        X_t^{\#_{I^\varpi_2}, [(x,\mathbf{u})]_2^{\varpi}}
        ,
        \dots
        ,
        X_t^{\#_{I^\varpi_{\#_\varpi}}, [(x,\mathbf{u})]_{\#_{\varpi}}^\varpi}
      \big)
\\&\quad-
      \varphi^{(\#_\varpi)}(X_t^{0,y})
      \big(
        X_t^{\#_{I^\varpi_1}, [(y,\mathbf{u})]_1^{\varpi}}
        ,
        X_t^{\#_{I^\varpi_2}, [(y,\mathbf{u})]_2^{\varpi}}
        ,
        \dots
        ,
        X_t^{\#_{I^\varpi_{\#_\varpi}}, [(y,\mathbf{u})]_{\#_{\varpi}}^\varpi}
      \big)
      \big\|_V
    \Big]
    \Big)
\\&\leq
    \sum_{
      \varpi \in \Pi_k
    }
    \Bigg\{
    \|\varphi^{(\#_\varpi)}(X_t^{0,x})-\varphi^{(\#_\varpi)}(X_t^{0,y})\|_{\lpn{\#_\varpi+1}{\P}{L^{(\#_\varpi)}(H,V)}}
    \prod^{\#_\varpi}_{i=1} 
    \frac{
      \|X_t^{\#_{I^\varpi_i},[(x,\mathbf{u})]_i^{\varpi}}\|_{\lpn{\#_\varpi+1}{\P}{H}}
      }{
    \prod^{\#_{I^\varpi_i}}_{j=1} 
    \|u_{I^\varpi_{i,j}}\|_{ H_{-\delta_{I^\varpi_{i,j}}} }
      }
\\&\quad+
    |\varphi|_{\Cb{\#_\varpi}(H,V)}
    \sum^{\#_\varpi}_{i=1}
    \frac{\|X_t^{\#_{I^\varpi_i},[(x,\mathbf{u})]_i^{\varpi}}-X_t^{\#_{I^\varpi_i},[(y,\mathbf{u})]_i^{\varpi}}\|_{\lpn{\#_\varpi}{\P}{H}}}{
    \prod^{\#_{I^\varpi_i}}_{j=1} 
    \|u_{I^\varpi_{i,j}}\|_{ H_{-\delta_{I^\varpi_{i,j}}} }
    }
\\&\quad\cdot
    \Bigg[
    \prod^{i-1}_{j=1} 
    \frac{\|X_t^{\#_{I^\varpi_j},[(y,\mathbf{u})]_j^{\varpi}}\|_{\lpn{\#_\varpi}{\P}{H}}    }{
    \prod^{\#_{I^\varpi_j}}_{l=1} 
    \|u_{I^\varpi_{j,l}}\|_{ H_{-\delta_{I^\varpi_{j,l}}} }
    }
    \Bigg] \,
    \Bigg[
    \prod^{\#_\varpi}_{j=i+1} 
    \frac{\|X_t^{\#_{I^\varpi_j},[(x,\mathbf{u})]_j^{\varpi}}\|_{\lpn{\#_\varpi}{\P}{H}}    }{
    \prod^{\#_{I^\varpi_j}}_{l=1} 
    \|u_{I^\varpi_{j,l}}\|_{ H_{-\delta_{I^\varpi_{j,l}}} }
    }
    \Bigg]
    \Bigg\}
    .
\end{split}
\end{equation}
Next we note that \eqref{eq:SEE.derivative} and item~(iii) of Corollary~2.10 in~\cite{AnderssonJentzenKurniawan2016arXiv} 
(with 
$ H = H $, 
$ U = U $, 
$ T = T $, 
$ \eta = \eta $, 
$ \alpha = 0 $, 
$ \beta = 0 $, 
$ W = W $, 
$ A = A $, 
$ F = F $, 
$ B = B $, 
$ p = p $, 
$ \delta = 0 $
for 
$ p \in [2,\infty) $ 
in the notation of Corollary~2.10 in~\cite{AnderssonJentzenKurniawan2016arXiv})
ensure that for all 
$ p \in [2,\infty) $ 
it holds that 
\begin{equation}
\label{eq:cor2.10}
  \sup_{\substack{ x,y \in H, \, x \neq y }} \,
  \sup_{ t \in [0,T] }
  \frac{
    \|X^{0,x}_t - X^{0,y}_t\|_{\lpn{p}{\P}{H}}
  }{ \|x-y\|_H }
  < \infty.
\end{equation}
This implies that for all 
$ x \in H $, 
$ t \in [0,T] $ 
it holds that 
\begin{equation}
  \limsup\nolimits_{ H \ni y \to x }
  \E[\min\{1,
  \| X^{0,x}_t - X^{0,y}_t \|_H
  \}]
  =0.
\end{equation}
The fact that
$ \forall \, k \in \{1,2,\ldots,n\} \colon \varphi^{(k)} \in \Cp{}{}(H,L^{(k)}(H,V)) $ 
therefore assures that for all 
$ k \in \{1,2,\ldots,n\} $, 
$ x \in H $, 
$ t \in [ 0 , T ] $ 
it holds that 
\begin{equation}
  \limsup\nolimits_{ H \ni y \to x }
  \E[\min\{1,
  \|\varphi^{(k)}(X^{0,x}_t)-\varphi^{(k)}(X^{0,y}_t)\|_{ L^{(k)}(H,V) }
  \}]
  =0.
\end{equation}
Combining this and, e.g., Lemma~4.2 in 
Hutzenthaler et al.~\cite{HutzenthalerJentzenSalimova2016arXiv} 
(with 
$ I = \{\emptyset\} $, 
$ 
  (\Omega,\mathcal{F},\P) =
  (\Omega,\mathcal{F},\P) 
$, 
$ c=1 $, 
$
  X^m(\emptyset,\omega) = 
  \|\varphi^{(k)}(X^{0,y_m}_t(\omega))-\varphi^{(k)}(X^{0,y_0}_t(\omega))\|_{ L^{(k)}(H,V) }
$
for 
$ \omega \in \Omega $, 
$ t \in [0,T] $, 
$ m \in \N $, 
$ k \in \{1,2,\ldots,n\} $, 
$
  (y_l)_{ l \in \N_0 }
  \in 
  \{
    v \in \mathbb{M}( \N_0, H )
    \colon
    \limsup_{ l \to \infty }
    \|v_l-v_0\|_H
    =0
  \}
$
in the notation of Lemma~4.2 in 
Hutzenthaler et al.~\cite{HutzenthalerJentzenSalimova2016arXiv}) 
establishes that for all 
$ k \in \{1,2,\ldots,n\} $, 
$ \varepsilon \in (0,\infty) $, 
$ t \in [0,T] $
and all sequences 
$ (y_m)_{m\in\N_0} \subseteq H $ 
with 
$
    \limsup_{ m \to \infty }
    \|y_m-y_0\|_H
    =0
$ 
it holds that 
\begin{equation}
  \limsup\nolimits_{ m \to \infty }
  \P\big(
    \|\varphi^{(k)}(X^{0,y_m}_t)-\varphi^{(k)}(X^{0,y_0}_t)\|_{ L^{(k)}(H,V) }
    \geq \varepsilon
  \big)
  =0.
\end{equation}
Combining this, 
the fact that 
$
  \forall \, k \in \{1,2,\ldots,n\}
  \colon
  \sup_{ x \in H }
  \|\varphi^{(k)}(x)\|_{ L^{(k)}(H,V) }
  < \infty
$,
and, e.g., Proposition~4.5 in 
Hutzenthaler et al.~\cite{HutzenthalerJentzenSalimova2016arXiv} 
(with 
$ I = \{\emptyset\} $, 
$ 
  (\Omega,\mathcal{F},\P) =
  (\Omega,\mathcal{F},\P) 
$, 
$ p = p $, 
$ V = \R $, 
$
  X^m(\emptyset,\omega) = 
  \|\varphi^{(k)}( X^{0,y_m}_t(\omega) )-\varphi^{(k)}( X^{0,y_0}_t(\omega) )\|_{L^{(k)}(H,V)}
$
for 
$ \omega \in \Omega $, 
$ t \in [0,T] $, 
$ p \in [2,\infty) $, 
$ k \in \{1,2,\ldots,n\} $, 
$ m \in \N_0 $, 
$
  (y_l)_{ l \in \N_0 }
  \in 
  \{
    v \in \mathbb{M}( \N_0, H )
    \colon
    \limsup_{ l \to \infty }
    \|v_l-v_0\|_H
    =0
  \}
$
in the notation of Proposition~4.5 in 
Hutzenthaler et al.~\cite{HutzenthalerJentzenSalimova2016arXiv})
therefore shows that for all 
$ k \in \{1,2,\ldots,n\} $, 
$ p \in [2,\infty) $, 
$ t \in [ 0 , T ] $ 
and all sequences 
$
  (y_m)_{m \in \N_0} \subseteq H
$ 
with 
$ \limsup_{ m \to \infty } \|y_m-y_0\|_H = 0 $
it holds that 
\begin{equation}
\label{eq:trans.continuity.vitali}
  \limsup\nolimits_{ m \to \infty }
    \E\Big[\|\varphi^{(k)}(X_t^{0,y_m})-\varphi^{(k)}(X_t^{0,y_0})\|^p_{L^{(k)}(H,V)}\Big]
  =0.
\end{equation}
Furthermore, \eqref{eq:SEE.derivative} and item~(v) of Theorem~2.1 in~\cite{AnderssonJentzenKurniawan2016a} 
(with 
$ T = T $, 
$ \eta = \eta $, 
$ H = H $, 
$ U = U $, 
$ W = W $, 
$ A = A $, 
$ n = n $, 
$ F = F $, 
$ B = B $, 
$ \alpha = 0 $, 
$ \beta = 0 $, 
$ k = k $, 
$ p = p $
for 
$ p \in [2,\infty) $, 
$ k \in \{1,2,\ldots,n\} $ 
in the notation of Theorem~2.1 
in~\cite{AnderssonJentzenKurniawan2016a}) 
ensure that 
for all 
$ k \in \{1,2,\ldots,n\} $, 
$ p \in [2,\infty) $, 
$ t \in [0,T] $
it holds that 
\begin{multline}
\label{item:itemv}
  \big(
  H \ni x \mapsto
  \big(
    H^k \ni \mathbf{u} \mapsto
    [X^{k,(x,\mathbf{u})}_t]_{ \P, \mathcal{B}(H) } \in \lpnb{p}{\P}{H}
  \big)
  \in L^{(k)}(H, \lpnb{p}{\P}{H})
  \big) 
\\
  \in \Cp{}{}( H, L^{(k)}(H, \lpnb{p}{\P}{H}) )
  .
\end{multline}
Combining~\eqref{eq:trans.continuity.holder} 
(with 
$ k = k $, 
$ \delta_1 = 0 $, 
$ \delta_2 = 0,\ldots,$
$ \delta_k = 0 $ 
for $ k \in \{1,2,\ldots,n\} $
in the notation of~\eqref{eq:trans.continuity.holder})
with~\eqref{eq:itemii}, 
\eqref{eq:trans.continuity.vitali}, \eqref{item:itemv}, and Jensen's inequality
yields that for all 
$ k \in \{1,2,\ldots,n\} $, 
$ x \in H $, 
$ t \in [ 0 , T ] $ 
it holds that 
\begin{equation}
\begin{split}
&
    \limsup_{ H \ni y \to x }
    \sup_{ \mathbf{u} = (u_1,u_2,\ldots,u_k) \in (\nzspace{H})^k }
    \bigg(
    \tfrac{1}{
      \prod^k_{ i=1 } \|u_i\|_H
    }
\\&\cdot
      \bigg\|
    \smallsum\limits_{
      \varpi \in \Pi_k
    }
    \E
    \Big[
      \varphi^{(\#_\varpi)}(X_t^{0,x})
      \big(
        X_t^{\#_{I^\varpi_1}, [(x,\mathbf{u})]_1^{\varpi}}
        ,
        X_t^{\#_{I^\varpi_2}, [(x,\mathbf{u})]_2^{\varpi}}
        ,
        \dots
        ,
        X_t^{\#_{I^\varpi_{\#_\varpi}}, [(x,\mathbf{u})]_{\#_{\varpi}}^\varpi}
      \big)
\\&-
      \varphi^{(\#_\varpi)}(X_t^{0,y})
      \big(
        X_t^{\#_{I^\varpi_1}, [(y,\mathbf{u})]_1^{\varpi}}
        ,
        X_t^{\#_{I^\varpi_2}, [(y,\mathbf{u})]_2^{\varpi}}
        ,
        \dots
        ,
        X_t^{\#_{I^\varpi_{\#_\varpi}}, [(y,\mathbf{u})]_{\#_{\varpi}}^\varpi}
      \big)
    \Big]
      \bigg\|_V
      \bigg)
\\&\leq
    \sum_{
      \varpi \in \Pi_k
    }
    \Bigg\{\bigg(
    \limsup_{ H \ni y \to x }
    \Big[
    \|\varphi^{(\#_\varpi)}(X_t^{0,x})-\varphi^{(\#_\varpi)}(X_t^{0,y})\|_{\lpn{\#_\varpi+1}{\P}{L^{(\#_\varpi)}(H,V)}}
    \Big]
    \bigg)
\\&\cdot
    \Bigg(
    \prod_{ I \in \varpi }
    \sup_{ \mathbf{u} = (u_i)_{ i \in I } \in (\nzspace{H})^{\#_I} }
    \Bigg[
    \frac{
      \|X_t^{\#_I,(x,\mathbf{u})}\|_{\lpn{\#_\varpi+1}{\P}{H}}
      }{
    \prod_{ i \in I } 
    \|u_i\|_H
      }
    \Bigg]
    \Bigg)
\\&+
    |\varphi|_{\Cb{\#_\varpi}(H,V)}
    \sum_{ I \in \varpi }
    \Bigg(
    \limsup_{ H \ni y \to x }
    \sup_{ \mathbf{u} = (u_i)_{ i \in I } \in (\nzspace{H})^{\#_I} }
    \Bigg[
    \frac{\|X_t^{\#_I,(x,\mathbf{u})}-X_t^{\#_I,(y,\mathbf{u})}\|_{\lpn{\#_\varpi}{\P}{H}}}{
    \prod_{ i \in I } 
    \|u_i\|_H
    }
    \Bigg]
    \Bigg)
\\&\cdot
    \Bigg(
    \prod_{ J \in \varpi \setminus \{I\} } 
    \sup_{ y \in H }
    \sup_{ \mathbf{u} = (u_i)_{ i \in J } \in (\nzspace{H})^{\#_J} }
    \Bigg[
    \frac{\|X_t^{\#_J,(y,\mathbf{u})}\|_{\lpn{\#_\varpi}{\P}{H}}    }{
    \prod_{ i \in J } 
    \|u_i\|_H
    }
    \Bigg]
    \Bigg)
    \Bigg\}
    =0.
\end{split}
\end{equation}
This proves~\eqref{eq:trans.derivative.continuity}.
Next we claim that for all 
$ k \in \{1,2,\ldots,n\} $, 
$
  \mathbf{u} \in
  H^k
$, 
$ x \in H $, 
$ t \in [ 0 , T ] $ 
it holds that 
$
  \big( H \ni y \mapsto \trans(t,y) \in V \big) 
  \in \Cb{k}(H,V)
$ 
and 
\begin{equation}
\label{items:trans.regularity}
\begin{split}
    \big(
    \tfrac{\partial^k}{\partial x^k}
    \trans
    \big)(t,x)
  \mathbf{u}
  =
  \smallsum\limits_{
    \varpi \in \Pi_k
  }
  \E
  \Big[
    \varphi^{(\#_\varpi)}(X_t^{0,x})
    \big(
        X_t^{ \#_{I^\varpi_1}, [ (x,\mathbf{u}) ]_1^{ \varpi } }
        ,
        X_t^{ \#_{I^\varpi_2}, [ (x,\mathbf{u}) ]_2^{ \varpi } }
        ,
        \dots
        ,
        X_t^{ \#_{I^\varpi_{\#_\varpi}}, [(x,\mathbf{u})]_{ \#_\varpi }^{ \varpi } }
    \big)
  \Big].
\end{split}
\end{equation}

We now prove~\eqref{items:trans.regularity} by induction on 
$k\in\{1,2,\ldots,n\}$. 
For the base case $ k = 1 $ 
we note that~\eqref{eq:SEE.derivative}, Jensen's inequality, and items~(ix)--(x) of Theorem~2.1 
in~\cite{AnderssonJentzenKurniawan2016a} 
(with 
$ T = T $, 
$ \eta = \eta $, 
$ H = H $, 
$ U = U $, 
$ W = W $, 
$ A = A $, 
$ n = n $, 
$ F = F $, 
$ B = B $, 
$ \alpha = 0 $, 
$ \beta = 0 $, 
$ p = p $, 
$ t = t $
for 
$ t \in [0,T] $,  
$ p \in [2,\infty) $
in the notation of Theorem~2.1 
in~\cite{AnderssonJentzenKurniawan2016a}) 
ensure that for all 
$ p \in [1,\infty) $, 
$ x, u_1 \in H $, 
$ t \in [0,T] $
it holds that 
$
  \big(
    H \ni y \mapsto
    [X^{0,y}_t]_{ \P, \mathcal{B}(H) } \in \lpnb{p}{\P}{H}
  \big)
  \in \Cp{1}{}( H, \lpnb{p}{\P}{H} )
$ 
and 
\begin{equation}
\label{eq:itemvii}
  \big(
    \tfrac{d}{d x}
    [X^{0,x}_t]_{\P,\mathcal{B}(H)}
  \big) u_1
  = 
  [X^{1,(x,u_1)}_t]_{\P,\mathcal{B}(H)}
  .
\end{equation}
Lemma~\ref{lem:stoch.chain.rule} 
(with 
$ U = H $, 
$ V = H $, 
$ W = V $, 
$ (\Omega, \mathcal{F}, \P) = (\Omega, \mathcal{F}, \P) $, 
$ X^{m,\mathbf{u}} = X^{m,\mathbf{u}}_t $, 
$ \varphi  = \varphi $ 
for 
$ t \in [0,T] $, 
$ \mathbf{u} \in H^{m+1} $, $ m \in \{0,1\} $
in the notation of Lemma~\ref{lem:stoch.chain.rule})
therefore implies that for all 
$ x, u \in H $, 
$ t \in [0,T] $ 
it holds that 
\begin{equation}
  \big(
    H \ni y \mapsto
    \trans(t,y)=\E[\varphi(X^{0,y}_t)] \in V
  \big) 
  \in \Cp{1}{}(H,V)
\end{equation}
and 
\begin{equation}
    \big(
    \tfrac{\partial}{\partial x}
    \trans\big)(t,x)
  u
  =
  \E[ \varphi'(X^{0,x}_t) X^{1,(x,u)}_t ]
  .
\end{equation}
This and~\eqref{eq:trans.derivative.holder} 
prove~\eqref{items:trans.regularity} in the base case $k=1$.
For the induction step 
$ \{1,2,\ldots,n-1\} \ni k \to k+1 \in \{2,3,\ldots,n\} $ 
assume that there exists a natural number 
$ k \in \{1,2,\ldots,n-1\} $
such that~\eqref{items:trans.regularity} holds for 
$k=1$, $k=2,\ldots,k=k$, 
let 
$ \Phi_m \colon H^{m+1} \to V $, 
$ m \in \{1,2,\ldots,k\} $, 
be the functions which satisfy for all 
$ m \in \{1,2,\ldots,k\} $, 
$ \mathbf{u}=(u_1,u_2,\ldots,u_{m+1}) \in H^{m+1} $ 
that 
$
  \Phi_m(\mathbf{u})
  =
  \varphi^{(m)}(u_{m+1})(u_1,u_2,\ldots,u_m)
$, 
and let 
$ \mathbf{Y}^{ m,\mathbf{v}, \varpi, \mathbf{u} } \colon [0,T] \times \Omega \to H^{\#_\varpi+1} $, 
$ \mathbf{u} \in H^k $, 
$ \varpi \in \Pi_k $, 
$ \mathbf{v} \in H^{m+1} $, 
$ m \in \{0,1\} $, 
be the stochastic processes which satisfy for all 
$ \varpi \in \Pi_k $, 
$ \mathbf{u} \in H^k $, 
$ x, h \in H $, 
$ t \in [0,T] $ 
that 
\begin{equation}
  \mathbf{Y}^{ 0,x, \varpi, \mathbf{u} }_t
  =
  \big(
        X_t^{\#_{I^\varpi_1}, [(x,\mathbf{u})]_1^{\varpi}}
        ,
        X_t^{\#_{I^\varpi_2}, [(x,\mathbf{u})]_2^{\varpi}}
        ,
        \dots
        ,
        X_t^{\#_{I^\varpi_{\#_\varpi}}, [(x,\mathbf{u})]_{\#_{\varpi}}^\varpi}
        ,
  X^{0,x}_t
  \big) 
\end{equation}
and 
\begin{equation}
  \mathbf{Y}^{ 1,(x,h), \varpi, \mathbf{u} }_t
  =
  \big(
  X^{\#_{I^\varpi_1}+1,([(x,\mathbf{u})]^\varpi_{1},h)}_t,
  X^{\#_{I^\varpi_2}+1,([(x,\mathbf{u})]^\varpi_{2},h)}_t,
  \ldots,
  X^{\#_{I^\varpi_{\#_\varpi}}+1,([(x,\mathbf{u})]^\varpi_{\#_\varpi},h)}_t,
  X^{1,(x,h)}_t\big). 
\end{equation}
Next note that Lemma~\ref{lem:test.derivative} 
(with 
$ V = H $, 
$ W = V $, 
$ n = m $, 
$ \varphi = \varphi $, 
$ \Phi = \Phi_m $
for $ m \in \{1,2,\ldots,k\} $
in the notation of Lemma~\ref{lem:test.derivative})
shows that for all 
$ m \in \{1,2,\ldots,k\} $, 
$ \mathbf{u}=(u_1,u_2,\ldots,u_{m+1}) $, 
$ \mathbf{\tilde{u}}=(\tilde{u}_1,\tilde{u}_2,\ldots,\tilde{u}_{m+1}) \in H^{m+1} $ 
it holds that 
$
  \Phi_m \in \Cp{1}{}(H^{m+1},V)
$ 
and 
\begin{equation}
\label{eq:test.derivative}
\begin{split}
  \Phi'_m(\mathbf{u}) \mathbf{\tilde{u}}
&=
  \varphi^{(m+1)}(u_{m+1})(u_1,u_2,\ldots,u_m,\tilde{u}_{m+1})
\\&\quad+
  {\smallsum^m_{i=1}}
  \varphi^{(m)}(u_{m+1})(u_1,u_2,\ldots,u_{i-1},\tilde{u}_i,u_{i+1},u_{i+2},\ldots,u_m)
  .
\end{split}
\end{equation}	
This and H\"{o}lder's inequality imply that for all 
$ m \in \{1,2,\ldots,k\} $, 
$
  \mathbf{u}=(u_1,u_2,\ldots,u_{m+1}), \,
  \mathbf{\tilde{u}}=(\tilde{u}_1,\tilde{u}_2,\ldots,\tilde{u}_{m+1})
  \in H^{m+1}
$
it holds that 
\begin{equation}
\begin{split}
&
  \|\Phi'_m(\mathbf{u}) \mathbf{\tilde{u}}\|_V
\\&\leq
  |\varphi|_{\Cb{m+1}(H,V)} \,
  \|\tilde{u}_{m+1}\|_H
  \smallprod^m_{i=1} \|u_i\|_H
+
  \smallsum^m_{i=1}
  |\varphi|_{\Cb{m}(H,V)} \,
  \|\tilde{u}_i\|_H
  \smallprod_{ j \in \{1,2,\ldots,m\}\setminus\{i\} } 
  \|u_j\|_H
\\&\leq
  \|\mathbf{\tilde{u}}\|_{H^{m+1}}
  \big[
  |\varphi|^2_{\Cb{m+1}(H,V)}
  \smallprod^m_{i=1} \|u_i\|^2_H
+
  |\varphi|^2_{\Cb{m}(H,V)}
  \smallsum^m_{i=1}
  \smallprod_{ j \in \{1,2,\ldots,m\}\setminus\{i\} } 
  \|u_j\|^2_H  
  \big]^{1/2}
  .
\end{split}
\end{equation}
Hence, we obtain that for all 
$ m \in \{1,2,\ldots,k\} $, 
$
  \mathbf{u}=(u_1,u_2,\ldots,u_{m+1})
  \in H^{m+1}
$
it holds that 
\begin{equation}
\begin{split}
&
  \|\Phi'_m(\mathbf{u})\|_{L(H^{m+1},V)}
\leq
  \|\varphi\|_{\Cb{m+1}(H,V)}
  \big[
  \smallprod^m_{i=1} \|u_i\|^2_H
+
  \smallsum^m_{i=1}
  \smallprod_{ j \in \{1,2,\ldots,m\}\setminus\{i\} } 
  \|u_j\|^2_H  
  \big]^{1/2}  
\\&\leq
  \|\varphi\|_{\Cb{m+1}(H,V)}
\\&\quad\cdot
  \big[
  \smallprod^m_{i=1}  |\!\max\{1,\|\mathbf{u}\|_{H^{m+1}}\}|^2
+
  \smallsum^m_{i=1}
  \smallprod_{ j \in \{1,2,\ldots,m\}\setminus\{i\} } 
  |\!\max\{1,\|\mathbf{u}\|_{H^{m+1}}\}|^2
  \big]^{1/2}
\\&\leq
  \sqrt{m+1} \,
  \|\varphi\|_{\Cb{m+1}(H,V)} \,
  |\!\max\{1,\|\mathbf{u}\|_{H^{m+1}}\}|^m
  .
\end{split}
\end{equation}
This shows that for all 
$ m \in \{1,2,\ldots,k\} $ 
it holds that 
\begin{equation}
\label{eq:polynomial.order}
  \sup_{ \mathbf{u} \in H^{m+1} }
  \frac{
    \| \Phi'_m(\mathbf{u}) \|_{ L(H^{m+1},V) }
  }{
    |\!\max\{ 1, \|\mathbf{u}\|_{H^{m+1}} \}|^m
  }
  < \infty.
\end{equation}
Next we note that for all 
$ m \in \N $, 
$ p \in [1,\infty) $, 
$ Y_1,Y_2,\ldots,Y_m \in \lpn{0}{\P}{H} $ 
it holds that 
\begin{equation}
\label{eq:product.lp}
\begin{split}
  \|(Y_1,Y_2,\ldots,Y_m)\|_{\lpn{p}{\P}{H^m}}
&=
  \big\|[
    {\smallsum^m_{i=1}}
    \|Y_i\|^2_H
  ]^{1/2}\big\|_{\lpn{p}{\P}{\R}}
  \leq
  \big\|
    {\smallsum^m_{i=1}}
    \|Y_i\|_H
  \big\|_{\lpn{p}{\P}{\R}}
\\&\leq
  {\smallsum^m_{i=1}}
  \|Y_i\|_{\lpn{p}{\P}{H}}
  .
\end{split}
\end{equation}
This shows that for all 
$ m \in \{0,1\} $, 
$ p \in [1,\infty) $, 
$ \varpi \in \Pi_k $, 
$ \mathbf{u} \in H^k $, 
$ \mathbf{v} \in H^{m+1} $, 
$ t \in [0,T] $
it holds that 
\begin{equation}
\label{eq:vector.integrability}
  \mathbf{Y}^{m,\mathbf{v},\varpi,\mathbf{u}}_t 
  \in \lpn{p}{\P}{H^{\#_\varpi+1}}.
\end{equation} 
Next observe that \eqref{eq:klinear}, \eqref{eq:product.lp}, and Jensen's inequality imply that for all 
$ p \in [1,\infty) $, 
$ \varpi \in \Pi_k $, 
$ \mathbf{u} \in H^k $, 
$ x \in H $, 
$ t \in [0,T] $ 
it holds that 
\begin{equation}
\label{eq:vector.bound.linear}
  \big(
    H \ni h \mapsto
    \big[ \mathbf{Y}^{1,(x,h),\varpi,\mathbf{u}}_t \big]_{ \P, \mathcal{B}(H^{\#_\varpi+1}) }
    \in \lpnb{p}{\P}{H^{\#_\varpi+1}}
  \big)
  \in L(H,\lpnb{p}{\P}{H^{\#_\varpi+1}})
  .
\end{equation}
Furthermore, we note that~\eqref{eq:SEE.derivative} and item~(vi) of Theorem~2.1 
in~\cite{AnderssonJentzenKurniawan2016a} 
(with 
$ T = T $, 
$ \eta = \eta $, 
$ H = H $, 
$ U = U $, 
$ W = W $, 
$ A = A $, 
$ n = n $, 
$ F = F $, 
$ B = B $, 
$ \alpha = 0 $, 
$ \beta = 0 $, 
$ k = m $, 
$ p = p $, 
$ x = x $
for 
$ x \in H $, 
$ p \in [2,\infty) $,  
$ m \in \{2,3,\ldots,k+1\} $ 
in the notation of Theorem~2.1 
in~\cite{AnderssonJentzenKurniawan2016a}) 
ensure that for all 
$ m \in \{2,3,\ldots,k+1\} $, 
$ p \in [2,\infty) $, 
$ x \in H $, 
$ t \in [0,T] $
it holds that 
\begin{equation}
\label{eq:itemvii.higher}
  \limsup_{ \nzspace{H} \ni u_m \to 0 }
  \sup_{ \mathbf{u}=(u_1,u_2,\ldots,u_{m-1}) \in (\nzspace{H})^{m-1} }
  \Bigg[
  \frac{
    \|
      X^{m-1,(x+u_m,\mathbf{u})}_t - X^{m-1,(x,\mathbf{u})}_t
      - X^{m,(x,\mathbf{u},u_m)}_t
    \|_{\lpn{p}{\P}{H}}
  }{
    \prod^m_{ i=1 }
    \|u_i\|_H
  }
  \Bigg]
  =0.
\end{equation}
Combining~\eqref{eq:itemvii} and~\eqref{eq:itemvii.higher} with~\eqref{eq:product.lp} and Jensen's inequality yields that for all 
$ p \in [1,\infty) $, 
$ \varpi \in \Pi_k $, 
$ \mathbf{u} \in H^k $, 
$ x \in H $, 
$ t \in [0,T] $ 
it holds that 
\begin{equation}
\label{eq:vector.diff}
\begin{split}
&
  \limsup_{ \nzspace{H} \ni h \to 0 }
  \Bigg[
  \frac{
    \big\|
    \mathbf{Y}^{0,x+h,\varpi,\mathbf{u}}_t
    -
    \mathbf{Y}^{0,x,\varpi,\mathbf{u}}_t
    -
    \mathbf{Y}^{1,(x,h),\varpi,\mathbf{u}}_t
    \big\|_{\lpn{p}{\P}{H^{\#_\varpi+1}}}
  }{ \|h\|_H }
  \Bigg]
\\&\leq
  \limsup_{ \nzspace{H} \ni h \to 0 }
  \Bigg[
  \frac{
    \|X^{0,x+h}_t - X^{0,x}_t - X^{1,(x,h)}_t\|_{ \lpn{p}{\P}{H} }
  }{
    \|h\|_H
  }
  \Bigg]
\\&+
  \sum^{\#_\varpi}_{ i=1 }
  \limsup_{ \nzspace{H} \ni h \to 0 }
  \Bigg[
  \frac{
    \big\|
    X^{\#_{I^\varpi_i},[(x+h,\mathbf{u})]^\varpi_{i}}_t
    -
    X^{\#_{I^\varpi_i},[(x,\mathbf{u})]^\varpi_{i}}_t-
    X^{\#_{I^\varpi_i}+1,([(x,\mathbf{u})]^\varpi_{i},h)}_t
    \big\|_{\lpn{p}{\P}{H}}
  }{ \|h\|_H }
  \Bigg]
  =0.
\end{split}
\end{equation}
In addition, combining~\eqref{item:itemv} with~\eqref{eq:product.lp} and Jensen's inequality
yields that for all 
$ p \in [1,\infty) $, 
$ \varpi \in \Pi_k $, 
$ \mathbf{u} \in H^k $, 
$ x \in H $, 
$ t \in [0,T] $ 
it holds that 
\begin{equation}
\label{eq:vector.continuity}
\begin{split}
&
  \limsup_{ H \ni y \to x }
  \sup_{ h \in \nzspace{H} }
  \left[
  \frac{
    \| \mathbf{Y}^{1,(x,h),\varpi,\mathbf{u}}_t - \mathbf{Y}^{1,(y,h),\varpi,\mathbf{u}}_t \|_{\lpn{p}{\P}{H^{\#_\varpi+1}}}
  }{\|h\|_H}
  \right]
\\&\leq
  \limsup_{ H \ni y \to x }
  \sup_{ h \in \nzspace{H} }
  \left[
  \frac{
    \| X^{1,(x,h)}_t - X^{1,(y,h)}_t \|_{\lpn{p}{\P}{H}}
  }{\|h\|_H}
  \right]
\\&+
  \sum^{\#_\varpi}_{i=1}
  \limsup_{ H \ni y \to x }
  \sup_{ h \in \nzspace{H} }
  \left[
  \frac{
    \| X^{\#_{I^\varpi_i}+1,([x,\mathbf{u}]^\varpi_i,h)}_t - X^{\#_{I^\varpi_i}+1,([y,\mathbf{u}]^\varpi_i,h)}_t \|_{\lpn{p}{\P}{H}}
  }{\|h\|_H}
  \right]
  =0.
\end{split}
\end{equation}
Combining~\eqref{eq:vector.bound.linear} and~\eqref{eq:vector.diff} 
hence yields that for all 
$ p \in [1,\infty) $, 
$ \varpi \in \Pi_k $, 
$ \mathbf{u} \in H^k $, 
$ x, h \in H $, 
$ t \in [0,T] $ 
it holds that 
\begin{equation}
  \big(
  H \ni y \mapsto
    \big[ \mathbf{Y}^{0,y,\varpi,\mathbf{u}}_t \big]_{ \P, \mathcal{B}(H^{\#_\varpi+1}) }
    \in \lpnb{p}{\P}{H^{\#_\varpi+1}}
  \big) 
  \in \Cp{1}{}( H,\lpnb{p}{\P}{H^{\#_\varpi+1}}  )
\end{equation}
and 
\begin{equation}
  \big(
  \tfrac{\partial}{ \partial x }
  \big[ \mathbf{Y}^{0,x,\varpi,\mathbf{u}}_t \big]_{ \P, \mathcal{B}(H^{\#_\varpi+1}) }
  \big) h
  =
  \big[ \mathbf{Y}^{1,(x,h),\varpi,\mathbf{u}}_t \big]_{ \P, \mathcal{B}(H^{\#_\varpi+1}) }
  .
\end{equation}
This, \eqref{eq:test.derivative}, \eqref{eq:polynomial.order}, 
and Lemma~\ref{lem:stoch.chain.rule}
(with 
$ U = H $, 
$ V = H^{\#_\varpi+1} $, 
$ W = V $, 
$ (\Omega, \mathcal{F}, \P) = (\Omega, \mathcal{F}, \P) $, 
$ 
  X^{0,x} = 
  \mathbf{Y}^{0,x,\varpi,\mathbf{u}}_t
$, 
$ 
  X^{1,(x,h)} = 
  \mathbf{Y}^{1,(x,h),\varpi,\mathbf{u}}_t
$, 
$ \varphi = \Phi_{\#_\varpi} $ 
for 
$ t \in [0,T] $, 
$ x, h \in H $, 
$ \mathbf{u} \in H^k $, 
$ \varpi \in \Pi_k $
in the notation of Lemma~\ref{lem:stoch.chain.rule})
assure that 
\begin{enumerate}[(a)]
\item
\label{item:trans.higher.derivative.smooth}
it holds for all 
$ \varpi \in \Pi_k $, 
$ \mathbf{u} \in H^k $, 
$ t \in [0,T] $ 
that 
\begin{multline}
  \Big(
    H \ni x \mapsto
    \E\Big[\Phi_{\#_\varpi}\big(\mathbf{Y}^{0,x,\varpi,\mathbf{u}}_t\big)\Big]
\\=
    \E\Big[
      \varphi^{(\#_\varpi)}(X_t^{0,x})
      \big(
        X_t^{\#_{I^\varpi_1}, [(x,\mathbf{u})]_1^{\varpi}}
        ,
        X_t^{\#_{I^\varpi_2}, [(x,\mathbf{u})]_2^{\varpi}}
        ,
        \dots
        ,
        X_t^{\#_{I^\varpi_{\#_\varpi}}, [(x,\mathbf{u})]_{\#_{\varpi}}^\varpi}
      \big)
    \Big]
    \in V
  \Big) 
  \in \Cp{1}{}(H,V)
\end{multline}
and 
\item
\label{item:trans.higher.derivative.rep}
it holds for all 
$ \varpi \in \Pi_k $, 
$ \mathbf{u} \in H^k $, 
$ x, u_{k+1} \in H $, 
$ t \in [0,T] $ 
that 
\begin{equation}
\label{eq:induction.rep.III}
\begin{split}
&\!\!\!\!\!\!\!\!\!\!\!\!\!\!\!\!\!
    \Big(
    \tfrac{\partial}{\partial x}
    \E\Big[
      \varphi^{(\#_\varpi)}(X_t^{0,x})
      \big(
        X_t^{ \#_{I^\varpi_1}, [ (x,\mathbf{u}) ]_1^{ \varpi } }
        ,
        X_t^{ \#_{I^\varpi_2}, [ (x,\mathbf{u}) ]_2^{ \varpi } }
        ,
        \dots
        ,
        X_t^{ \#_{I^\varpi_{\#_\varpi}}, [ (x,\mathbf{u}) ]_{ \#_\varpi }^{ \varpi } }
      \big)
    \Big]
    \Big) u_{k+1}
\\&\!\!\!\!\!\!\!\!\!\!\!\!\!\!\!\!\!=
\Big(
\tfrac{\partial}{\partial x}
\E\Big[
\Phi_{\#_\varpi}\big(\mathbf{Y}^{0,x,\varpi,\mathbf{u}}_t\big)
\Big]
\Big) u_{k+1}
\\&\!\!\!\!\!\!\!\!\!\!\!\!\!\!\!\!\!=
\E\Big[
\Phi'_{\#_\varpi}\big(\mathbf{Y}^{0,x,\varpi,\mathbf{u}}_t\big)
\mathbf{Y}^{1,(x,u_{k+1}),\varpi,\mathbf{u}}_t
\Big]
\\&\!\!\!\!\!\!\!\!\!\!\!\!\!\!\!\!\!=
  \E\Big[
    \varphi^{(\#_\varpi+1)}(X_t^{0,x})
    \big(
        X_t^{ \#_{I^\varpi_1}, [ (x,\mathbf{u}) ]_1^{ \varpi } }
        ,
        X_t^{ \#_{I^\varpi_2}, [ (x,\mathbf{u}) ]_2^{ \varpi } }
        ,
        \dots
        ,
        X_t^{ \#_{I^\varpi_{\#_\varpi}}, [ (x,\mathbf{u}) ]_{ \#_\varpi }^{ \varpi } }
      ,
      X_t^{1,(x,u_{k+1})}
    \big)
  \Big]
\\&\!\!\!\!\!\!\!\!\!\!\!\!\!\!\!\!\!\quad+
  \smallsum^{\#_\varpi}_{i=1}
  \E\Big[
    \varphi^{(\#_\varpi)}(X_t^{0,x})
    \big(
      X_t^{\#_{I^\varpi_1},[(x,\mathbf{u})]_1^{\varpi}}
      ,
      X_t^{\#_{I^\varpi_2},[(x,\mathbf{u})]_2^{\varpi}}
      ,
      \dots
      ,
      X_t^{\#_{I^\varpi_{i-1}},[(x,\mathbf{u})]_{i-1}^{\varpi}}
      ,
\\&\!\!\!\!\!\!\!\!\!\!\!\!\!\!\!\!\!\quad
      X_t^{\#_{I^\varpi_i}+1,([(x,\mathbf{u})]_i^{\varpi},u_{k+1})}
      ,
      X_t^{\#_{I^\varpi_{i+1}},[(x,\mathbf{u})]_{i+1}^{\varpi}}
      ,
      X_t^{\#_{I^\varpi_{i+2}},[(x,\mathbf{u})]_{i+2}^{\varpi}}
      ,
      \ldots
      ,
      X_t^{\#_{I^\varpi_{\#_\varpi}},[(x,\mathbf{u})]_{\#_\varpi}^\varpi}
    \big)
  \Big]
  .
\end{split}
\end{equation}
\end{enumerate}
Combining item~\eqref{item:trans.higher.derivative.smooth} with the induction hypothesis shows that for all 
$ \mathbf{u}\in H^k $, 
$ t \in [0,T] $ 
it holds that 
\begin{multline}
\Big(
H \ni x \mapsto
{\smallsum\limits_{\varpi\in\Pi_k}}
    \E\Big[
    \varphi^{(\#_\varpi)}(X_t^{0,x})
    \big(
    X_t^{\#_{I^\varpi_1}, [(x,\mathbf{u})]_1^{\varpi}}
    ,
    X_t^{\#_{I^\varpi_2}, [(x,\mathbf{u})]_2^{\varpi}}
    ,
    \dots
    ,
    X_t^{\#_{I^\varpi_{\#_\varpi}}, [(x,\mathbf{u})]_{\#_{\varpi}}^\varpi}
    \big)
    \Big]
\\=
    \big(\tfrac{\partial^k}{\partial x^k}\phi\big)(t,x) \mathbf{u}
    \in V
\Big) 
\in \Cp{1}{}(H,V).
\end{multline}
Item~\eqref{item:trans.higher.derivative.rep} hence proves that for all 
$\mathbf{u}\in H^k$, 
$x,h\in H$, 
$t\in[0,T]$ 
it holds that 
\begin{equation}
\label{eq:induction.derivative}
\begin{split}
&
  \big(\tfrac{d}{d x}\big[\big(\tfrac{\partial^k}{\partial x^k}\phi\big)(t,x) \mathbf{u}\big]\big) h
\\&=
\sum_{\varpi\in\Pi_k}\bigg\{
\E\Big[
\varphi^{(\#_\varpi+1)}(X_t^{0,x})
\big(
X_t^{ \#_{I^\varpi_1}, [ (x,\mathbf{u}) ]_1^{ \varpi } }
,
X_t^{ \#_{I^\varpi_2}, [ (x,\mathbf{u}) ]_2^{ \varpi } }
,
\dots
,
X_t^{ \#_{I^\varpi_{\#_\varpi}}, [ (x,\mathbf{u}) ]_{ \#_\varpi }^{ \varpi } }
,
X_t^{1,(x,h)}
\big)
\Big]
\\&\quad+
\smallsum^{\#_\varpi}_{i=1}
\E\Big[
\varphi^{(\#_\varpi)}(X_t^{0,x})
\big(
X_t^{\#_{I^\varpi_1},[(x,\mathbf{u})]_1^{\varpi}}
,
X_t^{\#_{I^\varpi_2},[(x,\mathbf{u})]_2^{\varpi}}
,
\dots
,
X_t^{\#_{I^\varpi_{i-1}},[(x,\mathbf{u})]_{i-1}^{\varpi}}
,
\\&\quad
X_t^{\#_{I^\varpi_i}+1,([(x,\mathbf{u})]_i^{\varpi},h)}
,
X_t^{\#_{I^\varpi_{i+1}},[(x,\mathbf{u})]_{i+1}^{\varpi}}
,
X_t^{\#_{I^\varpi_{i+2}},[(x,\mathbf{u})]_{i+2}^{\varpi}}
,
\ldots
,
X_t^{\#_{I^\varpi_{\#_\varpi}},[(x,\mathbf{u})]_{\#_\varpi}^\varpi}
\big)
\Big]\bigg\}
.
\end{split}
\end{equation}
In addition, note that
\begin{equation}
\begin{split}
&
\Pi_{k+1}
=
\Big\{ 
\varpi \cup \big\{\{k+1\}\big\}
\colon 
\varpi \in \Pi_k 
\Big\}
\\&
\biguplus
\Big\{
\big\{
I^\varpi_1, I^\varpi_2, \ldots, I^\varpi_{i-1}, 
I^\varpi_i \cup \{k+1\}, I^\varpi_{i+1}, I^\varpi_{i+2}, 
\ldots, I^\varpi_{\#_\varpi}
\big\}
\colon
i \in \{1,2,\ldots,\#_\varpi\}, \,
\varpi \in \Pi_k
\Big\}
.
\end{split}
\end{equation}
This ensures that for all 
$ \mathbf{u}=(u_0,u_1,\ldots,u_k) \in H^{k+1} $, 
$ h \in H $, 
$ t \in [0,T] $
it holds that 
\begin{equation}
\label{eq:derivative.relation}
\begin{split}
&
  \smallsum_{
    \varpi \in \Pi_{k+1}
  }
    \varphi^{(\#_\varpi)}(X_t^{0,u_0})
    \big(
      X_t^{\#_{I^\varpi_1},[(\mathbf{u},h)]_1^{\varpi}}
      ,
      X_t^{\#_{I^\varpi_2},[(\mathbf{u},h)]_2^{\varpi}}
      ,
      \dots
      ,
      X_t^{\#_{I^\varpi_{\#_\varpi}},[(\mathbf{u},h)]_{\#_\varpi}^\varpi}
    \big)
\\&=
  \smallsum_{
    \varpi \in \Pi_k
  }
  \Big[
    \varphi^{(\#_\varpi+1)}(X_t^{0,u_0})
    \big(
        X_t^{ \#_{I^\varpi_1}, [ \mathbf{u} ]_1^{ \varpi } }
        ,
        X_t^{ \#_{I^\varpi_2}, [ \mathbf{u} ]_2^{ \varpi } }
        ,
        \dots
        ,
        X_t^{ \#_{I^\varpi_{\#_\varpi}}, [\mathbf{u} ]_{ \#_\varpi }^{ \varpi } }
      ,
      X_t^{1,(u_0,h)}
    \big)
\\&\quad+
  \smallsum^{\#_\varpi}_{ i=1 }
    \varphi^{(\#_\varpi)}(X_t^{0,u_0})
    \big(
      X_t^{\#_{I^\varpi_1},[\mathbf{u}]_1^{\varpi}}
      ,
      X_t^{\#_{I^\varpi_2},[\mathbf{u}]_2^{\varpi}}
      ,
      \dots
      ,
      X_t^{\#_{I^\varpi_{i-1}},[\mathbf{u}]_{i-i}^{\varpi}}
      ,
      X_t^{\#_{I^\varpi_i}+1,([\mathbf{u}]_i^{\varpi},h)}
      ,
\\&\quad
      X_t^{\#_{I^\varpi_{i+1}},[\mathbf{u}]_{i+1}^{\varpi}}
      ,
      X_t^{\#_{I^\varpi_{i+2}},[\mathbf{u}]_{i+2}^{\varpi}}
      ,
      \dots
      ,
      X_t^{\#_{I^\varpi_{\#_\varpi}},[\mathbf{u}]_{\#_\varpi}^\varpi}
    \big)
    \Big]
    .
\end{split}
\end{equation}
Combining this with~\eqref{eq:induction.derivative} establishes 
that for all 
$\mathbf{u}\in H^k$, 
$x,h\in H$, 
$t\in[0,T]$ 
it holds that 
\begin{equation}
\begin{split}
&
\big(\tfrac{d}{d x}\big[\big(\tfrac{\partial^k}{\partial x^k}\phi\big)(t,x) \mathbf{u}\big]\big) h
\\&=
\sum_{\varpi\in\Pi_{k+1}}
\E\Big[\varphi^{(\#_\varpi)}(X^{0,u_0}_t)\big(
X^{\#_{I^\varpi_1},[(x,\mathbf{u},h)]^\varpi_1}_t,
X^{\#_{I^\varpi_2},[(x,\mathbf{u},h)]^\varpi_2}_t,
\ldots,
X^{\#_{I^\varpi_{\#_\varpi}},[(x,\mathbf{u},h)]^\varpi_{\#_\varpi}}_t,
\big)\Big].
\end{split}
\end{equation}
Hence, we obtain that for all 
$ \mathbf{u} = (u_1,u_2,\ldots,u_k) \in H^k $, 
$ x \in H $, 
$ t \in [0,T] $ 
it holds that 
\begin{equation}
\begin{split}
&
  \limsup_{ \nzspace{H} \ni h \to 0 }
  \bigg(
  \frac{1}{ \|h\|_H }
  \Big\|
    \big(
    \tfrac{\partial^k}{\partial x^k}
    \trans
    \big)(t,x+h)
    \mathbf{u}
  -
    \big(
    \tfrac{\partial^k}{\partial x^k}
    \trans
    \big)(t,x)
    \mathbf{u}
\\&-
  \smallsum_{
    \varpi \in \Pi_{k+1}
  }
  \E
  \Big[
    \varphi^{(\#_\varpi)}(X_t^{0,x})
    \big(
      X_t^{\#_{I^\varpi_1},[(x,\mathbf{u},h)]_1^{\varpi}}
      ,
      X_t^{\#_{I^\varpi_2},[(x,\mathbf{u},h)]_2^{\varpi}}
      ,
      \dots
      ,
      X_t^{\#_{I^\varpi_{\#_\varpi}},[(x,\mathbf{u},h)]_{\#_\varpi}^\varpi}
    \big)
  \Big]
  \Big\|_V
  \bigg)
  =0.
\end{split}
\end{equation}
Combining~\eqref{eq:trans.derivative.continuity} and Lemma~\ref{lem:partial.diff} 
(with 
$ V = H $, 
$ W = V $, 
$ n = k $, 
$ f = ( H \ni x \mapsto \trans(t,x) \in V) $, 
$
  g =
  (
    H \ni x \mapsto
    (
      H^{k+1}
      \ni \mathbf{u} \mapsto
  \smallsum_{
    \varpi \in \Pi_{k+1}
  }
  \E
  [
    \varphi^{(\#_\varpi)}(X_t^{0,x})
    (
        X_t^{\#_{I^\varpi_1}, [(x,\mathbf{u})]_1^{\varpi}}
        ,
        X_t^{\#_{I^\varpi_2}, [(x,\mathbf{u})]_2^{\varpi}}
        ,
        \dots
        ,
$
$
        X_t^{\#_{I^\varpi_{\#_\varpi}}, [(x,\mathbf{u})]_{\#_{\varpi}}^\varpi}
    )
  ]
  \in V
    )
    \in L^{(k+1)}(H,V)
  )
$ 
for $ t \in [0,T] $ 
in the notation of Lemma~\ref{lem:partial.diff}) 
therefore shows that for all 
$\mathbf{u}\in H^{k+1}$, 
$x\in H$, 
$t\in[0,T]$
it holds that 
$
  (H \ni y \mapsto \phi(t,y) \in V)
  \in \Cp{k+1}{}(H,V)
$
and 
\begin{equation}
\begin{split}
&
  \big(\tfrac{\partial^{k+1}}{\partial x^{k+1}}\phi\big)(t,x) \mathbf{u}
\\&=
  \sum_{
  	\varpi \in \Pi_{k+1}
  }
  \E
  \Big[
  \varphi^{(\#_\varpi)}(X_t^{0,x})
  \big(
  X_t^{\#_{I^\varpi_1},[(x,\mathbf{u})]_1^{\varpi}}
  ,
  X_t^{\#_{I^\varpi_2},[(x,\mathbf{u})]_2^{\varpi}}
  ,
  \dots
  ,
  X_t^{\#_{I^\varpi_{\#_\varpi}},[(x,\mathbf{u})]_{\#_\varpi}^\varpi}
  \big)
  \Big].
\end{split}
\end{equation}
This and~\eqref{eq:trans.derivative.holder} prove~\eqref{items:trans.regularity} in the case $k+1$.
Induction thus completes the proof of~\eqref{items:trans.regularity}.

Next observe that item~\eqref{item:trans.smoothness} and item~\eqref{item:thm.representation} follow immediately from~\eqref{items:trans.regularity}.
It thus remains to prove items~\eqref{item:lem.derivative.apriori}--\eqref{item:trans.lip}. 
To prove item~\eqref{item:lem.derivative.apriori} we first note that
\eqref{eq:SEE.derivative} and item~(ii) of Theorem~2.1 in~\cite{AnderssonJentzenKurniawan2016a}
(with 
$ T = T $, 
$ \eta = \eta $, 
$ H = H $, 
$ U = U $, 
$ W = W $, 
$ A = A $, 
$ n = n $, 
$ F = F $, 
$ B = B $, 
$ \alpha = \alpha $, 
$ \beta = \beta $, 
$ k = k $, 
$ p = p $, 
$ \boldsymbol{\delta} = \boldsymbol{\delta} $
for 
$ \boldsymbol{\delta} \in \deltaset{k} $, 
$ p \in [2,\infty) $, 
$ k \in \{1,2,\ldots,n\} $ 
in the notation of Theorem~2.1 
in~\cite{AnderssonJentzenKurniawan2016a}) 
ensure that for all 
$ k \in \{1,2,\ldots,n\} $, 
$ p \in [2,\infty) $, 
$ \boldsymbol{\delta} = (\delta_1,\delta_2,\ldots,\delta_k) \in \deltaset{k} $ 
it holds that 
\begin{equation}
\label{eq:itemii.rough}
  \sup_{ \mathbf{u}=(u_0,u_1,\ldots,u_k) \in H \times
  (\nzspace{H})^k }
  \sup_{ t\in(0,T] }
  \Bigg[
  \frac{
    t^{\iota^{\boldsymbol{\delta},\alpha,\beta}_\N}\,
    \|X^{k,\mathbf{u}}_t\|_{\lpn{p}{\P}{H}}
  }{
    \prod^k_{ i = 1 } 
    \|u_i\|_{H_{-\delta_i}}
  }
  \Bigg]
  < \infty
  .
\end{equation}
This and Jensen's inequality establish item~\eqref{item:lem.derivative.apriori}.
Moreover, observe that for all 
$ k \in \N $, 
$
  \boldsymbol{\delta}=(\delta_1, \delta_2, \ldots, \delta_k) \in \R^k
$, 
$ \varpi \in \Pi_k $ 
it holds that 
\begin{equation}
\label{eq:time.bound.I}
\begin{split}
&
  \sup_{ t \in (0,T] }
   \prod_{ I \in \varpi }
   t^{ ( - \iota^{ \boldsymbol{\delta}, \alpha, \beta }_I + \sum_{ i \in I } \delta_i ) }  
   =
  \sup_{ t \in (0,T] }
  t^{ \min\{1-\alpha,\nicefrac{1}{2}-\beta\} \sum_{I\in\varpi} \1_{[2,\infty)}(\#_I) }
\\&=
  T^{ \min\{1-\alpha,\nicefrac{1}{2}-\beta\} \sum_{I\in\varpi} \1_{[2,\infty)}(\#_I) }  
  \leq
  |T\vee 1|^{ \lfloor k/2 \rfloor \min\{ 1-\alpha, 1/2-\beta \} }.
\end{split}
\end{equation}
Combining~\eqref{eq:trans.holder.bound} with 
item~\eqref{item:thm.representation},  
\eqref{eq:itemii.rough}, \eqref{eq:time.bound.I}, and Jensen's inequality
yields that for all 
$ k \in \{1,2,\ldots,n\} $, 
$
  \boldsymbol{\delta}=(\delta_1, \delta_2, \ldots, \delta_k) \in
  \deltaset{k}
$ 
it holds that 
\begin{equation}
\label{eq:trans.derivative.bound}
\begin{split}
&
  \sup_{ v \in H }
  \sup_{
    \mathbf{u}=(u_1, u_2, \dots, u_k) \in 
    (\nzspace{H})^k
  }
  \sup_{
    t \in (0,T]
  }
  \Bigg[
  \frac{
    t^{\sum^k_{i=1} \delta_i}
    \,
    \big\|
    \big(
    \tfrac{\partial^k}{\partial x^k}
    \trans
    \big)(t,v)
  \mathbf{u}
    \big\|_V
  }{
    \prod^k_{ i=1 }
    \|
      u_i
    \|_{
      H_{ - \delta_i }
    }
  }
  \Bigg]
\\&\leq
  \sum_{
    \varpi \in \Pi_k
  }
  |\varphi|_{\Cb{\#_\varpi}(H,V)}
  \Bigg(
    \sup_{ t \in (0,T] }
  \left[
    \prod_{ I \in \varpi }
    t^{ ( - \iota^{ \boldsymbol{\delta}, \alpha, \beta }_I + \sum_{ i \in I } \delta_i ) }
  \right]
  \Bigg)
\\&\quad\cdot
  \Bigg(
  \prod_{ I\in\varpi }
  \sup_{ x \in H }
  \sup_{ \mathbf{u}=(u_i)_{i\in I} \in 
  (\nzspace{H})^{\#_I} }
  \sup_{ t\in(0,T] }
  \Bigg[
  \frac{
    t^{\iota^{\boldsymbol{\delta},\alpha,\beta}_I}\,
    \|X^{\#_I,(x,\mathbf{u})}_t\|_{\lpn{\#_\varpi}{\P}{H}}
  }{
    \prod_{i \in I} 
    \|u_i\|_{H_{-\delta_i}}
  }
  \Bigg]\Bigg)
\\&\leq
    |T\vee 1|^{ \lfloor k/2 \rfloor \min\{ 1-\alpha, 1/2-\beta \} } \,
    \|\varphi\|_{ \Cb{k}(H,V) }
\\&\quad\cdot
  \sum_{
    \varpi \in \Pi_k
  }
  \prod_{ I\in\varpi }
  \sup_{ x \in H }
  \sup_{ \mathbf{u}=(u_i)_{i\in I} \in 
  (\nzspace{H})^{\#_I} }
  \sup_{ t\in(0,T] }
  \Bigg[
  \frac{
    t^{\iota^{\boldsymbol{\delta},\alpha,\beta}_I}\,
    \|X^{\#_I,(x,\mathbf{u})}_t\|_{\lpn{\#_\varpi}{\P}{H}}
  }{
    \prod_{i \in I} 
    \|u_i\|_{H_{-\delta_i}}
  }
  \Bigg]
  < \infty
  .
\end{split}
\end{equation}
This proves item~\eqref{item:thm.derivative.bound}. 
Next we observe that~\eqref{eq:SEE.derivative} and item~(iv) of Theorem~2.1 in~\cite{AnderssonJentzenKurniawan2016a} 
(with 
$ T = T $, 
$ \eta = \eta $, 
$ H = H $, 
$ U = U $, 
$ W = W $, 
$ A = A $, 
$ n = n $, 
$ F = F $, 
$ B = B $, 
$ \alpha = \alpha $, 
$ \beta = \beta $, 
$ k = k $, 
$ p = p $, 
$ \boldsymbol{\delta} = \boldsymbol{\delta} $
for 
$ \boldsymbol{\delta} \in \deltaset{k} $, 
$ p \in [2,\infty) $, 
$
  k \in \{l\in\{1,2,\ldots,n\}\colon
  |F|_{\operatorname{Lip}^l(H,H_{-\alpha})} 
  + 
  |B|_{\operatorname{Lip}^l(H,HS(U,H_{-\beta}))} < \infty 
  \}
$ 
in the notation of Theorem~2.1 
in~\cite{AnderssonJentzenKurniawan2016a}) 
ensure that for all 
$ k \in \{1,2,\ldots,n\} $, 
$ p \in [2,\infty) $, 
$ \boldsymbol{\delta} = (\delta_1,\delta_2,\ldots,\delta_k) \in \deltaset{k} $ 
with 
$ 
|F|_{\operatorname{Lip}^k(H,H_{-\alpha})} 
+ 
|B|_{\operatorname{Lip}^k(H,HS(U,H_{-\beta}))} < \infty 
$ 
it holds that 
\begin{equation}
\label{eq:itemiii}
    \sup_{\substack{x,y \in H, \\ x \neq y}}
    \sup_{ \mathbf{u}=(u_1,u_2,\ldots,u_k) \in (\nzspace{H})^k }
    \sup_{ t \in (0,T] }
    \Bigg[
    \frac{
      t^{ \iota^{(\boldsymbol{\delta},0),\alpha,\beta}_\N } \,
      \|X_t^{k,(x,\mathbf{u})}-X_t^{k,(y,\mathbf{u})}\|_{\lpn{p}{\P}{H}}
    }{
      \|x-y\|_H
      \prod^k_{ i=1}
      \|u_i\|_{H_{-\delta_i}}
    }
    \Bigg]
    < \infty.
\end{equation}
Combining this and~\eqref{eq:cor2.10} with Jensen's inequality establish items~\eqref{item:lem.0.lip} and~\eqref{item:lem.derivative.lip}.
Moreover, note that for all 
$ k \in \N $, 
$
\boldsymbol{\delta}=(\delta_1, \delta_2, \ldots, \delta_k) \in \R^k
$, 
$ \varpi \in \Pi_k $, 
$ I \in \varpi $
it holds that 
\begin{equation}
\label{eq:time.bound.II}
\begin{split}
&
  \sup_{ t \in (0,T] }
  \Bigg[
  t^{ ( - \iota^{ (\boldsymbol{\delta},0), \alpha, \beta }_{I\cup\{k+1\}} + \sum_{ i \in I } \delta_i ) }
  \prod_{ J \in \varpi \setminus \{I\} }
  t^{ ( - \iota^{ \boldsymbol{\delta}, \alpha, \beta }_J + \sum_{ i \in J } \delta_i ) }
  \Bigg]
\\&=
  \sup_{ t \in (0,T] }
  t^{ \min\{1-\alpha,\nicefrac{1}{2}-\beta\} \, [1+\sum_{J\in\varpi\setminus\{I\}} \1_{[2,\infty)}(\#_J)] }
\\&=
  T^{ \min\{1-\alpha,\nicefrac{1}{2}-\beta\} \, [1+\sum_{J\in\varpi\setminus\{I\}} \1_{[2,\infty)}(\#_J)] }
\leq
  |T\vee 1|^{ \lceil k/2 \rceil \min\{ 1-\alpha, 1/2-\beta \} }.
\end{split}
\end{equation}
Furthermore, note that 
item~\eqref{item:stoch.derivative.representation} and~\eqref{eq:trans.continuity.holder} 
imply that for all 
$ k \in \{1,2,\ldots,n\} $, 
$
  \boldsymbol{\delta}=(\delta_1, \delta_2, \ldots, \delta_k) \in
  \deltaset{k}
$ 
with 
$
  |\varphi|_{\operatorname{Lip}^k(H,V)}
  < \infty
$ 
it holds that 
\begin{equation}
\label{eq:trans.lip}
\begin{split}
&
  \sup_{\substack{v,w \in H, \\ v \neq w}}
  \sup_{ \mathbf{u}=(u_1,u_2,\ldots,u_k) \in (\nzspace{H})^k }
  \sup_{t \in (0,T] }
  \left[
  \frac{
    t^{\sum^k_{i=1} \delta_i}
    \big\|
  \big[
    \big(
    \tfrac{\partial^k}{\partial x^k}
    \trans
    \big)(t,v)
  -
    \big(
    \tfrac{\partial^k}{\partial x^k}
    \trans
    \big)(t,w)
  \big]
    \mathbf{u}
    \big\|_V    
  }{
    \|v-w\|_H
    \prod^k_{i=1} \|u_i\|_{H_{-\delta_i}}
  }
  \right]
\\&\leq
    \sum_{
      \varpi \in \Pi_k
    }
    \Bigg\{
    |\varphi|_{ \operatorname{Lip}^{\#_\varpi}(H,V) }
  \Bigg(
    \sup_{ t \in (0,T] }
  \left[
    \prod_{ I \in \varpi }
    t^{ ( - \iota^{ \boldsymbol{\delta}, \alpha, \beta }_I + \sum_{ i \in I } \delta_i ) }
  \right]
  \Bigg)
  \Bigg(
    \sup_{\substack{x,y \in H, \\ x \neq y}}
    \sup_{ t \in (0,T] }
    \Bigg[
      \frac{\| X_t^{0,x} - X_t^{0,y} \|_{\lpn{\#_\varpi+1}{\P}{H}}}{\|x-y\|_H}
    \Bigg]\Bigg)
\\&\cdot
    \Bigg(
    \prod_{I\in\varpi} 
    \sup_{ x \in H }
    \sup_{ \mathbf{u}=(u_i)_{i\in I} \in (\nzspace{H})^{\#_I} }
    \sup_{ t \in (0,T] }
    \Bigg[
    \frac{
      t^{ \iota^{\boldsymbol{\delta},\alpha,\beta}_I } \,
      \|X_t^{\#_I,(x,\mathbf{u})}\|_{\lpn{\#_\varpi+1}{\P}{H}}
    }{
      \prod_{i\in I}
      \|u_i\|_{H_{-\delta_i}}
    }
    \Bigg]\Bigg)
\\&+
    |\varphi|_{\Cb{\#_\varpi}(H,V)}
    \sum_{I\in\varpi}
    \Bigg(
    \sup_{ t \in (0,T] }
  \Bigg[
    t^{ ( - \iota^{ (\boldsymbol{\delta},0), \alpha, \beta }_{I\cup\{k+1\}} + \sum_{ i \in I } \delta_i ) }
    \prod_{ J \in \varpi \setminus \{I\} }
    t^{ ( - \iota^{ \boldsymbol{\delta}, \alpha, \beta }_J + \sum_{ i \in J } \delta_i ) }
  \Bigg]\Bigg)
\\&\cdot
    \Bigg(
    \sup_{\substack{x,y \in H, \\ x \neq y}}
    \sup_{ \mathbf{u}=(u_i)_{i\in I} \in (\nzspace{H})^{\#_I} }
    \sup_{ t \in (0,T] }
    \Bigg[
    \frac{
      t^{ \iota^{(\boldsymbol{\delta},0),\alpha,\beta}_{I\cup\{k+1\}} } \,
      \|X_t^{\#_I,(x,\mathbf{u})}-X_t^{\#_I,(y,\mathbf{u})}\|_{\lpn{\#_\varpi}{\P}{H}}
    }{
      \|x-y\|_H
      \prod_{i\in I}
      \|u_i\|_{H_{-\delta_i}}
    }
    \Bigg]\Bigg)
\\&\cdot
    \Bigg(
    \prod_{ J \in \varpi \setminus \{I\} } 
    \sup_{ x \in H }
    \sup_{ \mathbf{u}=(u_i)_{i\in J} \in (\nzspace{H})^{\#_J} }
    \sup_{ t \in (0,T] }
    \Bigg[
    \frac{
      t^{ \iota^{\boldsymbol{\delta},\alpha,\beta}_J } \,
      \|X_t^{\#_J,(x,\mathbf{u})}\|_{\lpn{\#_\varpi}{\P}{H}}
    }{
      \prod_{i\in J}
      \|u_i\|_{H_{-\delta_i}}
    }
    \Bigg]\Bigg)
    \Bigg\}
    .
\end{split}
 \end{equation}
Combining~\eqref{eq:trans.lip} with~\eqref{eq:cor2.10}, \eqref{eq:itemii.rough}, \eqref{eq:time.bound.I}, \eqref{eq:itemiii}, \eqref{eq:time.bound.II}, and Jensen's inequality establishes item~\eqref{item:trans.lip}. 
The proof of Lemma~\ref{lem:derivative_formulas} is thus completed.
\end{proof}

\begin{theorem}
	\label{thm:derivative_formulas}
	Assume the setting in Section~\ref{sec:global_setting} 
	and let $ n \in \N $,
	$ \varphi \in \Cb{n}(H,V) $, 
	$ F \in \Cb{n}(H,H) $,
	$ B \in \Cb{n}(H,HS(U,H)) $. 
	Then 
	\begin{enumerate}[(i)]
		\item
		\label{thm.existence}
		it holds that there exist up-to-modifications unique
		$(\mathcal{F}_t)_{t\in[0,T]}$/$ \mathcal{B}(H) $-predictable stochastic processes
		$
		X^{ k,\mathbf{u} }
		\colon
		[ 0 , T ] \times \Omega
		\to H
		$, 
		$
		\mathbf{u} \in H^{k+1}
		$, 
		$
		k \in \{ 0, 1, \dots, n \}
		$,
		which satisfy
		for all
		$
		k \in \{ 0, 1, \dots, n \}
		$,
		$
		\mathbf{u} = (u_0,u_1,\ldots,u_k) \in H^{k+1}
		$, 
		$ p \in (0,\infty) $,
		$ t \in [0,T] $
		that
		$
		\sup_{s\in[0,T]}
		\E\big[\|X^{k,\mathbf{u}}_s\|^p_H\big]
		< \infty
		$ 
		and 
		\begin{equation}
		\begin{split}
		&
		[
		X_t^{k,\mathbf{u}}
		-
		e^{tA}
		\, \mathbbm{1}_{ \{ 0, 1 \} }(k) \, u_k  
		]_{ \P,\mathcal{B}(H) }
		\\ &
		=
		\int_0^t
		e^{ ( t - s ) A }
		\Bigg[
		\mathbbm{1}_{ \{ 0 \} }(k)
		\,
		F(X_s^{0,u_0})
		\\&\quad+
		\sum_{ \varpi\in \Pi_k }
		F^{ ( \#_\varpi ) }( X_s^{ 0,u_0 } )
		\big(
		X_s^{ \#_{I^\varpi_1}, [ \mathbf{u} ]_1^{ \varpi } }
		,
		X_s^{ \#_{I^\varpi_2}, [ \mathbf{u} ]_2^{ \varpi } }
		,
		\dots
		,
		X_s^{ \#_{I^\varpi_{\#_\varpi}}, [\mathbf{u} ]_{ \#_\varpi }^{ \varpi } }
		\big)
		\Bigg]
		\,{\bf ds}
		\\ &
		+
		\int_0^t
		e^{ ( t - s ) A }
		\Bigg[
		\mathbbm{1}_{ \{ 0 \} }(k)
		\,
		B(X_s^{0,u_0})
		\\&\quad+
		\sum_{ \varpi\in \Pi_k }
		B^{ ( \#_\varpi ) }( X_s^{ 0,u_0 } )
		\big(
		X_s^{ \#_{I^\varpi_1}, [ \mathbf{u} ]_1^{ \varpi } }
		,
		X_s^{ \#_{I^\varpi_2}, [ \mathbf{u} ]_2^{ \varpi } }
		,
		\dots
		,
		X_s^{ \#_{I^\varpi_{\#_\varpi}}, [\mathbf{u} ]_{ \#_\varpi }^{ \varpi } }
		\big)
		\Bigg]
		\, \diffns W_s
		,
		\end{split}
		\end{equation}
		\item
		\label{item:thm.TPS}
		it holds that there exists a unique function
		$
		\trans \colon [0,T] \times H \to V
		$ which satisfies for all 
		$ t \in [0,T] $, 
		$ x \in H $ 
		that 
		$
		\trans(t,x)
		=
		\E[\varphi(X^{0,x}_t)]
		$,
		\item
		\label{item:thm.trans.smoothness}
		it holds for all 
		$ t \in [ 0 , T ] $ 
		that 
		$ \big( H \ni x \mapsto \trans(t,x) \in V \big) \in \Cb{n}(H,V) $,  
		\item
		\label{item:thm.trans.derivative.integrability}
		it holds for all 
		$ k \in \{1,2,\ldots,n\} $, 
		$
		\mathbf{u} = (u_0 , u_1 , \dots , u_k) \in
		H^{k+1}
		$, 
		$ t \in [ 0 , T ] $ 
		that 
		\begin{equation}
		\sum\limits_{
			\varpi \in \Pi_k
		}
		\E\Big[\big\|
		\varphi^{(\#_\varpi)}(X_t^{0,u_0})
		\big(
		X_t^{ \#_{I^\varpi_1}, [ \mathbf{u} ]_1^{ \varpi } }
		,
		X_t^{ \#_{I^\varpi_2}, [ \mathbf{u} ]_2^{ \varpi } }
		,
		\dots
		,
		X_t^{ \#_{I^\varpi_{\#_\varpi}}, [\mathbf{u} ]_{ \#_\varpi }^{ \varpi } }
		\big)
		\big\|_V\Big]
		< \infty, 
		\end{equation}
		\item
		\label{item:thm2.representation}
		it holds for all 
		$ k \in \{1,2,\ldots,n\} $, 
		$
		\mathbf{u} \in
		H^k
		$, 
		$ x \in H $, 
		$ t \in [ 0 , T ] $ 
		that 
		\begin{equation}
		\begin{split}
		&
		\big(
		\tfrac{\partial^k}{\partial x^k}
		\trans
		\big)(t,x)
		\mathbf{u}
		\\&=
		\sum\limits_{
			\varpi \in \Pi_k
		}
		\E
		\Big[
		\varphi^{(\#_\varpi)}(X_t^{0,x})
		\big(
		X_t^{\#_{I^\varpi_1},[(x,\mathbf{u})]_1^{\varpi}}
		,
		X_t^{\#_{I^\varpi_2},[(x,\mathbf{u})]_2^{\varpi}}
		,
		\dots
		,
		X_t^{\#_{I^\varpi_{\#_\varpi}},[(x,\mathbf{u})]_{\#_\varpi}^\varpi}
		\big)
		\Big]
		,
		\end{split}
		\end{equation}
\item
\label{item:thm.derivative.apriori}
it holds for all 
$ p \in (0,\infty) $, 
$ k \in \{1,2,\ldots,n\} $, 
$
\boldsymbol{\delta}=(\delta_1,\delta_2,\dots,\delta_k)\in[0,\nicefrac{1}{2})^k
$, 
$ \alpha \in [0,1) $, 
$ \beta \in [0,\nicefrac{1}{2}) $ 
with 
$
\sum_{i=1}^k\delta_i
< \nicefrac{1}{2}
$ 
that 
\begin{equation}
\sup_{x\in H}
\sup_{\mathbf{u}=(u_1,u_2,\ldots,u_k)\in(\nzspace{H})^k}
\sup_{t\in(0,T]}
\left[
\frac{
	t^{\iota^{\boldsymbol{\delta},\alpha,\beta}_\N}
	\|X^{k,(x,\mathbf{u})}_t\|_{\lpn{p}{\P}{H}}
}{
\prod^k_{i=1} \|u_i\|_{H_{-\delta_i}}
}
\right]
< \infty,
\end{equation}
\item
\label{item:thm2.derivative.bound}
it holds for all 
$ k \in \{1,2,\ldots,n\} $, 
$
\boldsymbol{\delta}=(\delta_1,\delta_2,\dots,\delta_k)\in[0,\nicefrac{1}{2})^k
$, 
$ \alpha \in [0,1) $, 
$ \beta \in [0,\nicefrac{1}{2}) $ 
with 
$
\sum_{i=1}^k\delta_i
< \nicefrac{1}{2}
$ 
that 
\begin{equation}
\begin{split}
&
\sup_{ v \in H }
\sup_{
	\mathbf{u}=(u_1, u_2, \dots, u_k) \in 
	(\nzspace{H})^k
}
\sup_{
	t \in (0,T]
}
\Bigg[
\frac{
	t^{\sum^k_{i=1} \delta_i}
	\,
	\big\|
	\big(
	\tfrac{\partial^k}{\partial x^k}
	\trans
	\big)(t,v)
	\mathbf{u}
	\big\|_V
}{
\prod^k_{ i=1 }
\|
u_i
\|_{
	H_{ - \delta_i }
}
}
\Bigg]
\\&\leq
|T \vee 1|^{
	\lfloor k/2 \rfloor \,
	\min\{1-\alpha,\nicefrac{1}{2}-\beta\}
} \,
\|
\varphi
\|_{
	\Cb{k}(H,{V})
}
\\&\cdot
\sum_{
	\varpi \in \Pi_k
}
\prod_{
	I \in \varpi
}
\sup_{x\in H}
\sup_{
	\mathbf{u}=(u_i)_{i\in I}
	\in (\nzspace{H})^{\#_I}
}
\sup_{t\in(0,T]}
\Bigg[
\frac{
	t^{
		\iota^{\boldsymbol{\delta},\alpha,\beta}_I
	}
	\|
	X_t^{\#_I,(x,\mathbf{u})}
	\|_{\mathcal{L}^{\#_\varpi}(\P;H)}
}
{
	\prod_{i\in I}
	\|u_i\|_{H_{-\delta_i}}
}
\Bigg]
<\infty,
\end{split}
\end{equation}
\item
\label{item:thm.0.lip}
it holds for all 
$ p \in (0,\infty) $
that
\begin{equation}
\sup_{\substack{x,y\in H,\\ x\neq y}}
\sup_{t\in(0,T]}
\left[
\frac{
	\|X^{0,x}_t-X^{0,y}_t\|_{\lpn{p}{\P}{H}}
}{
\|x-y\|_H
}
\right]
< \infty,
\end{equation}
\item
\label{item:thm.derivative.lip}
it holds for all 
$ p \in (0,\infty) $, 
$ k \in \{1,2,\ldots,n\} $, 
$
\boldsymbol{\delta}=(\delta_1,\delta_2,\dots,\delta_k)\in[0,\nicefrac{1}{2})^k
$, 
$ \alpha \in [0,1) $, 
$ \beta \in [0,\nicefrac{1}{2}) $ 
with 
$
\sum_{i=1}^k\delta_i
< \nicefrac{1}{2}
$ 
and
$
|F|_{\operatorname{Lip}^k(H,H_{-\alpha})}
+
|B|_{\operatorname{Lip}^k(H,HS(U,H_{-\beta}))}
< \infty
$ 
that 
\begin{equation}
\sup_{\substack{x,y\in H,\\ x\neq y}}
\sup_{\mathbf{u}=(u_1,u_2,\ldots,u_k)\in(\nzspace{H})^k}
\sup_{t\in(0,T]}
\left[
\frac{
	t^{\iota^{(\boldsymbol{\delta},0),\alpha,\beta}_\N}
	\|X^{k,(x,\mathbf{u})}_t-X^{k,(y,\mathbf{u})}_t\|_{\lpn{p}{\P}{H}}
}{
\|x-y\|_H
\prod^k_{i=1} \|u_i\|_{H_{-\delta_i}}
}
\right]
< \infty,
\end{equation}
and 
\item
\label{item:thm.trans.lip}
it holds for all 
$ k \in \{1,2,\ldots,n\} $, 
$
\boldsymbol{\delta}=(\delta_1, \delta_2, \ldots, \delta_k) \in
[0,\nicefrac{1}{2})^k
$,
$ \alpha \in [0,1) $, 
$ \beta \in [0,\nicefrac{1}{2}) $ 
with 
$
\sum_{i=1}^k\delta_i
< \nicefrac{1}{2}
$ 
and 
$
|F|_{\operatorname{Lip}^k(H,H_{-\alpha})}
+
|B|_{\operatorname{Lip}^k(H,HS(U,H_{-\beta}))}
+
|\varphi|_{\operatorname{Lip}^k(H,V)} < \infty
$ 
that 
\begin{equation}
\begin{split}
&
\sup_{\substack{v, w \in H, \\ v \neq w}}
\sup_{ \mathbf{u}=(u_1,u_2,\ldots,u_k) \in (\nzspace{H})^k }
\sup_{t \in (0,T] }
\left[
\frac{
	t^{\sum^k_{i=1} \delta_i}
	\big\|
	\big[
	\big(
	\tfrac{\partial^k}{\partial x^k}
	\trans
	\big)(t,v)
	-
	\big(
	\tfrac{\partial^k}{\partial x^k}
	\trans
	\big)(t,w)
	\big]
	\mathbf{u}
	\big\|_V    
}{
\|v-w\|_H
\prod^k_{i=1} \|u_i\|_{H_{-\delta_i}}
}
\right]
\\&\leq
|T\vee 1|^{
	\lceil k/2 \rceil \, 
	\min\{1-\alpha,\nicefrac{1}{2}-\beta\}
} \,
\|\varphi\|_{ \operatorname{Lip}^k(H,V) }
\\&\cdot
\sum_{
	\varpi \in \Pi_k
}
\Bigg\{
\sup_{\substack{x,y \in H, \\ x \neq y}}
\sup_{ t \in (0,T] }
\Bigg[
\frac{\| X_t^{0,x} - X_t^{0,y} \|_{\lpn{\#_\varpi+1}{\P}{H}}}{\|x-y\|_H}
\Bigg]
\\&\cdot
\prod_{I\in\varpi} 
\sup_{ x \in H }
\sup_{ \mathbf{u}=(u_i)_{i\in I} \in (\nzspace{H})^{\#_I} }
\sup_{ t \in (0,T] }
\Bigg[
\frac{
	t^{ \iota^{\boldsymbol{\delta},\alpha,\beta}_I } \,
	\|X_t^{\#_I,(x,\mathbf{u})}\|_{\lpn{\#_\varpi+1}{\P}{H}}
}{
\prod_{i\in I}
\|u_i\|_{H_{-\delta_i}}
}
\Bigg]
\\&+
\sum_{I\in\varpi}
\sup_{\substack{x,y \in H, \\ x \neq y}}
\sup_{ \mathbf{u}=(u_i)_{i\in I} \in (\nzspace{H})^{\#_I} }
\sup_{ t \in (0,T] }
\Bigg[
\frac{
	t^{ \iota^{(\boldsymbol{\delta},0),\alpha,\beta}_{I\cup\{k+1\}} } \,
	\|X_t^{\#_I,(x,\mathbf{u})}-X_t^{\#_I,(y,\mathbf{u})}\|_{\lpn{\#_\varpi}{\P}{H}}
}{
\|x-y\|_H
\prod_{i\in I}
\|u_i\|_{H_{-\delta_i}}
}
\Bigg]
\\&\cdot
\prod_{ J \in \varpi \setminus \{I\} } 
\sup_{ x \in H }
\sup_{ \mathbf{u}=(u_i)_{i\in J} \in (\nzspace{H})^{\#_J} }
\sup_{ t \in (0,T] }
\Bigg[
\frac{
	t^{ \iota^{\boldsymbol{\delta},\alpha,\beta}_J } \,
	\|X_t^{\#_J,(x,\mathbf{u})}\|_{\lpn{\#_\varpi}{\P}{H}}
}{
\prod_{i\in J}
\|u_i\|_{H_{-\delta_i}}
}
\Bigg]
\Bigg\}
< \infty.
\end{split}
\end{equation}
\end{enumerate}
\end{theorem}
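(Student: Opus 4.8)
The plan is to reduce Theorem~\ref{thm:derivative_formulas} to Lemma~\ref{lem:derivative_formulas}. The only difference between the two statements is that the existence and uniqueness of the derivative processes $X^{k,\mathbf{u}}$, which is a \emph{standing assumption} in Lemma~\ref{lem:derivative_formulas}, is promoted to the conclusion~\eqref{thm.existence} here, whereas items~\eqref{item:thm.TPS}--\eqref{item:thm.trans.lip} coincide — up to relabeling — with the corresponding conclusions of the lemma. Hence essentially all the analytic content already resides in Lemma~\ref{lem:derivative_formulas}, and I would structure the proof in three short steps: (a) well-posedness of the system~\eqref{eq:SEE.derivative}; (b) well-definedness of $\trans$; (c) an appeal to the lemma.

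For step~(a) — which I expect to be the only genuinely new point — I would exploit the triangular structure of~\eqref{eq:SEE.derivative}: for each $k$, the equation for $X^{k,\mathbf{u}}$ involves only the lower-order processes $X^{j,\cdot}$ with $j\le k-1$ together with $X^{k,\mathbf{u}}$ itself entering affine-linearly (through the partition $\{\{1,2,\ldots,k\}\}\in\Pi_k$, which contributes the terms involving $F^{(1)}(X^{0,u_0}_s)\,X^{k,\mathbf{u}}_s$ and $B^{(1)}(X^{0,u_0}_s)\,X^{k,\mathbf{u}}_s$). Since $F\in\Cb{n}(H,H)$ and $B\in\Cb{n}(H,HS(U,H))$ are in particular globally Lipschitz continuous, $X^{0,\cdot}$ is the mild solution of a semilinear stochastic evolution equation with Lipschitz coefficients, and, recursively, each $X^{k,\cdot}$ with $k\ge1$ solves, given $X^{0,\cdot},\ldots,X^{k-1,\cdot}$, a linear stochastic evolution equation whose random coefficients have finite moments of all orders. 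Invoking the well-posedness theory for such systems (e.g., item~(i) of Theorem~2.1 in~\cite{AnderssonJentzenKurniawan2016a}, with $T=T$, $\eta=\eta$, $H=H$, $U=U$, $W=W$, $A=A$, $n=n$, $F=F$, $B=B$ in its notation) then yields up-to-modifications unique $(\mathcal{F}_t)_{t\in[0,T]}$/$\mathcal{B}(H)$-predictable processes $X^{k,\mathbf{u}}$, $\mathbf{u}\in H^{k+1}$, $k\in\{0,1,\ldots,n\}$, with $\sup_{s\in[0,T]}\E[\|X^{k,\mathbf{u}}_s\|_H^p]<\infty$ for all $p\in(0,\infty)$ and satisfying~\eqref{eq:SEE.derivative}. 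This proves item~\eqref{thm.existence}.

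For step~(b): since $\varphi\in\Cb{n}(H,V)$ grows at most linearly and $\sup_{s\in[0,T]}\E[\|X^{0,x}_s\|_H]<\infty$, the random variable $\varphi(X^{0,x}_t)$ is integrable for every $t\in[0,T]$, $x\in H$, so $\trans(t,x)=\E[\varphi(X^{0,x}_t)]$ defines a unique function $\trans\colon[0,T]\times H\to V$, which gives item~\eqref{item:thm.TPS}. For step~(c), the processes obtained in step~(a) together with this $\trans$ satisfy \emph{all} the hypotheses of Lemma~\ref{lem:derivative_formulas} verbatim; hence items~\eqref{item:thm.trans.smoothness}, \eqref{item:thm.trans.derivative.integrability}, \eqref{item:thm2.representation}, \eqref{item:thm.derivative.apriori}, \eqref{item:thm2.derivative.bound}, \eqref{item:thm.0.lip}, \eqref{item:thm.derivative.lip}, and~\eqref{item:thm.trans.lip} of the theorem follow directly from items~\eqref{item:trans.smoothness}, \eqref{item:trans.derivative.integrability}, \eqref{item:thm.representation}, \eqref{item:lem.derivative.apriori}, \eqref{item:thm.derivative.bound}, \eqref{item:lem.0.lip}, \eqref{item:lem.derivative.lip}, and~\eqref{item:trans.lip} of Lemma~\ref{lem:derivative_formulas}, respectively, feeding the same parameters $\alpha,\beta$ into the lemma as appear in the Lipschitz-type hypotheses of items~\eqref{item:thm.derivative.lip}--\eqref{item:thm.trans.lip}. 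In summary, the one part requiring an argument is the well-posedness in step~(a); everything else is bookkeeping against Lemma~\ref{lem:derivative_formulas}.
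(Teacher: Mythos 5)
Your proposal is correct and follows essentially the same route as the paper: the paper likewise obtains item~(i) (and item~(ii)) from item~(i) of Theorem~2.1 in~\cite{AnderssonJentzenKurniawan2016a} and then derives items~(iii)--(x) directly from items~(i)--(viii) of Lemma~\ref{lem:derivative_formulas}. Your additional remarks on the triangular/affine-linear structure of~\eqref{eq:SEE.derivative} and on the integrability of $\varphi(X^{0,x}_t)$ merely elaborate on why the cited results apply, so there is no substantive difference.
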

\begin{proof}
Note that items~\eqref{thm.existence} and~\eqref{item:thm.TPS} follow immediately from item~(i) of Theorem~2.1 in~\cite{AnderssonJentzenKurniawan2016a} 
(with 
$ T = T $, 
$ \eta = \eta $, 
$ H = H $, 
$ U = U $, 
$ W = W $, 
$ A = A $, 
$ n = n $, 
$ F = F $, 
$ B = B $, 
$ \alpha = 0 $, 
$ \beta = 0 $
in the notation of Theorem~2.1 in~\cite{AnderssonJentzenKurniawan2016a}).
Moreover, observe that items~\eqref{item:thm.trans.smoothness}--\eqref{item:thm.trans.lip} follow directly from items~\eqref{item:trans.derivative.integrability}--\eqref{item:trans.lip} of Lemma~\ref{lem:derivative_formulas}. The proof of Theorem~\ref{thm:derivative_formulas} is thus completed.
\end{proof}

\section{Regularity of transition semigroups for mollified stochastic evolution equations}

\begin{lemma}
\label{lem:mollified.transition}
Assume the setting in Section~\ref{sec:global_setting}, 
let $ n \in \N $, 
$ \alpha \in [0,1) $, 
$ \beta \in [0,\nicefrac{1}{2}) $, 
$ F \in \Cb{n}(H,H_{-\alpha}) $,
$ B \in \Cb{n}(H,HS(U,H_{-\beta})) $, 
$ \varphi \in \Cb{n}(H,V) $, 
let 
$
  X^{ \varepsilon, k, \mathbf{u} }
  \colon
  [ 0 , T ] \times \Omega
  \to H
$, 
$
  \mathbf{u} \in H^{k+1}
$, 
$
  k \in \{ 0, 1, \dots, n \}
$,
$ \varepsilon \in (0,T] $, 
be $(\mathcal{F}_t)_{t\in[0,T]}$/$ \mathcal{B}(H) $-predictable stochastic processes which satisfy for all 
$
  k \in \{ 0, 1, \dots, n \}
$,
$
  \mathbf{u} = (u_0,u_1,\ldots,u_k) \in H^{k+1}
$, 
$ \varepsilon \in (0,T] $, 
$ p \in (0,\infty) $,
$ t \in [0,T] $
that
$
  \sup_{s\in[0,T]}
  \E\big[\|X^{\varepsilon,k,\mathbf{u}}_s\|^p_H\big]
  < \infty
$ 
and 
\begin{equation}
\label{eq:SEE.derivative.mollified}
\begin{split}
&\!\!\!\!
  [
  X_t^{\varepsilon,k,\mathbf{u}}
  -
  e^{tA}
  \, \mathbbm{1}_{ \{ 0, 1 \} }(k) \, u_k  
  ]_{ \P,\mathcal{B}(H) }
\\ &\!\!\!\!
  =
  \int_0^t
    e^{ ( t - s + \varepsilon ) A }
    \Bigg[
      \mathbbm{1}_{ \{ 0 \} }(k)
      \,
      F(X_s^{\varepsilon,0,u_0})
\\&\quad+
      \sum_{ \varpi\in \Pi_k }
      F^{ ( \#_\varpi ) }( X_s^{ \varepsilon,0,u_0 } )
      \big(
        X_s^{ \varepsilon, \#_{I^\varpi_1}, [ \mathbf{u} ]_1^{ \varpi } }
        ,
        X_s^{ \varepsilon, \#_{I^\varpi_2}, [ \mathbf{u} ]_2^{ \varpi } }
        ,
        \dots
        ,
        X_s^{ \varepsilon, \#_{I^\varpi_{\#_\varpi}}, [\mathbf{u} ]_{ \#_\varpi }^{ \varpi } }
      \big)
    \Bigg]
  \,{\bf ds}
\\ &\!\!\!\!
  +
  \int_0^t
    e^{ ( t - s + \varepsilon ) A }
    \Bigg[
      \mathbbm{1}_{ \{ 0 \} }(k)
      \,
      B(X_s^{\varepsilon,0,u_0})
\\&\quad+
      \sum_{ \varpi\in \Pi_k }
      B^{ ( \#_\varpi ) }( X_s^{ \varepsilon,0,u_0 } )
      \big(
        X_s^{ \varepsilon, \#_{I^\varpi_1}, [ \mathbf{u} ]_1^{ \varpi } }
        ,
        X_s^{ \varepsilon, \#_{I^\varpi_2}, [ \mathbf{u} ]_2^{ \varpi } }
        ,
        \dots
        ,
        X_s^{ \varepsilon, \#_{I^\varpi_{\#_\varpi}}, [\mathbf{u} ]_{ \#_\varpi }^{ \varpi } }
      \big)
    \Bigg]
  \, \diffns W_s
  ,
\end{split}
\end{equation}
and let 
$
  \transmol{\varepsilon} \colon [0,T] \times H \to V
$, 
$ \varepsilon \in (0,T] $, 
be the functions which satisfy for all 
$ \varepsilon \in (0,T] $, 
$ t \in [0,T] $, 
$ x \in H $ 
that 
$
  \transmol{\varepsilon}(t,x)
  =
  \E[\varphi(X^{\varepsilon,0,x}_t)]
$.
Then 
\begin{enumerate}[(i)]
\item
\label{item:mollified.transition.derivative}
it holds for all 
$ \varepsilon \in (0,T] $, 
$ t \in [0,T] $
that 
$
  \big(
  H \ni x \mapsto
  \transmol{\varepsilon}(t,x) \in V
  \big)
  \in \Cb{n}(H,V)
$, 
\item
\label{item:mollified.transition.bound}
it holds for all 
$ k \in \{1,2,\ldots,n\} $, 
$ \delta_1, \delta_2, \ldots, \delta_k \in [0,\nicefrac{1}{2})  $ 
with 
$ \sum^k_{i=1} \delta_i < \nicefrac{1}{2} $ 
that 
\begin{equation}
  \sup_{ \varepsilon, t \in (0,T] }
  \sup_{ v \in H }
  \sup_{ \mathbf{u}=(u_1,u_2,\ldots,u_k) \in (\nzspace{H})^k }
  \left[
  \frac{
    t^{ \sum^k_{i=1} \delta_i } \,
    \big\|
    \big(
    \tfrac{\partial^k}{\partial x^k}
    \transmol{\varepsilon}
    \big)(t,v)
    \mathbf{u}
    \big\|_V
  }{
    \prod^k_{i=1} \|u_i\|_{ H_{-\delta_i} }
  }
  \right]
  < \infty, 
\end{equation}
and 
\item
\label{item:mollified.transition.lip}
it holds for all 
$ k \in \{1,2,\ldots,n\} $, 
$ \delta_1, \delta_2, \ldots, \delta_k \in [0,\nicefrac{1}{2})  $ 
with 
$ \sum^k_{i=1} \delta_i < \nicefrac{1}{2} $ 
and 
$
  |F|_{\operatorname{Lip}^k(H,H_{-\alpha})} +
  |B|_{\operatorname{Lip}^k(H,HS(U,H_{-\beta}))} + 
  |\varphi|_{\operatorname{Lip}^k(H,V)} < \infty 
$ 
that 
\begin{equation}
\begin{split}
&\!\!
  \sup_{ \varepsilon, t \in (0,T] }
  \sup_{\substack{ v,w \in H, \\ v \neq w }}
  \sup_{ \mathbf{u}=(u_1,u_2,\ldots,u_k) \in (\nzspace{H})^k }
  \left[
  \frac{
    t^{ \sum^k_{i=1} \delta_i } \,
    \big\|
  \big[
    \big(
    \tfrac{\partial^k}{\partial x^k}
    \transmol{\varepsilon}
    \big)(t,v)
      -
    \big(
    \tfrac{\partial^k}{\partial x^k}
    \transmol{\varepsilon}
    \big)(t,w)
  \big]
    \mathbf{u}
    \big\|_V
  }{
    \|v-w\|_H
    \prod^k_{i=1} \|u_i\|_{ H_{-\delta_i} }
  }
  \right]
< \infty. 
\end{split}
\end{equation}
\end{enumerate}
\end{lemma}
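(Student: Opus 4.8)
\noindent The plan is to deduce Lemma~\ref{lem:mollified.transition} from Theorem~\ref{thm:derivative_formulas} by absorbing the mollification into the coefficients. For $\varepsilon\in(0,T]$ set $F_\varepsilon:=e^{\varepsilon A}\circ F$ and $B_\varepsilon:=e^{\varepsilon A}\circ B$, where $e^{\varepsilon A}$ acts by post-composition on the outputs. Since $e^{\varepsilon A}$ restricts to a bounded linear operator from $H_{-\alpha}$ into $H$ and from $HS(U,H_{-\beta})$ into $HS(U,H)$ (smoothing of the analytic semigroup), the chain rule gives $F_\varepsilon\in\Cb{n}(H,H)$, $B_\varepsilon\in\Cb{n}(H,HS(U,H))$ together with $F_\varepsilon^{(j)}=e^{\varepsilon A}\circ F^{(j)}$ and $B_\varepsilon^{(j)}=e^{\varepsilon A}\circ B^{(j)}$ for $j\in\{0,1,\ldots,n\}$. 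By the semigroup identity $e^{(t-s+\varepsilon)A}=e^{(t-s)A}\,e^{\varepsilon A}$ (valid on $H_{-\alpha}$ and on $H_{-\beta}$), equation~\eqref{eq:SEE.derivative.mollified} becomes exactly equation~\eqref{eq:SEE.derivative} with $F$, $B$ replaced by $F_\varepsilon$, $B_\varepsilon$; hence, by the up-to-modification uniqueness in Theorem~\ref{thm:derivative_formulas}\eqref{thm.existence} applied with data $(F_\varepsilon,B_\varepsilon,\varphi)$, the prescribed processes $X^{\varepsilon,k,\mathbf{u}}$ are modifications of the corresponding processes there, and $\transmol{\varepsilon}$ equals the function $\trans$ of Theorem~\ref{thm:derivative_formulas} associated to $(F_\varepsilon,B_\varepsilon,\varphi)$. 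In particular, item~\eqref{item:mollified.transition.derivative} follows at once from Theorem~\ref{thm:derivative_formulas}\eqref{item:thm.trans.smoothness}.

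Next I would establish $\varepsilon$-uniform control of the coefficient norms: since $(e^{rA})_{r\ge 0}$ is a strongly continuous semigroup on $H_{-\alpha}$ and on $HS(U,H_{-\beta})$, it is uniformly bounded on $[0,T]$, so that
\begin{equation*}
  \sup_{\varepsilon\in(0,T]}\Big[\,\|F_\varepsilon\|_{\Cb{n}(H,H_{-\alpha})}+\|B_\varepsilon\|_{\Cb{n}(H,HS(U,H_{-\beta}))}\,\Big]<\infty
\end{equation*}
and, under the additional Lipschitz hypotheses of item~\eqref{item:mollified.transition.lip},
\begin{equation*}
  \sup_{\varepsilon\in(0,T]}\,\smallsum_{j=0}^{k}\Big[\,|F_\varepsilon|_{\operatorname{Lip}^j(H,H_{-\alpha})}+|B_\varepsilon|_{\operatorname{Lip}^j(H,HS(U,H_{-\beta}))}\,\Big]<\infty .
\end{equation*}
Then I apply Theorem~\ref{thm:derivative_formulas}\eqref{item:thm2.derivative.bound} and~\eqref{item:thm.trans.lip} with data $(F_\varepsilon,B_\varepsilon,\varphi)$. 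Their right-hand sides are products of $|T\vee 1|$-powers and of $\|\varphi\|_{\Cb{k}(H,V)}$ resp.\ $\|\varphi\|_{\operatorname{Lip}^k(H,V)}$ --- all independent of $\varepsilon$ --- times finitely many suprema over $x$, $\mathbf{u}$, $t$ of normalized $\mathcal{L}^{p}(\P;H)$-norms of $X^{\varepsilon,\#_I,(x,\mathbf{u})}_t$ and of their increments in $x$. By the quantitative a priori estimates for such processes (item~(ii) and item~(iv) of Theorem~2.1 in~\cite{AnderssonJentzenKurniawan2016a} and item~(iii) of Corollary~2.10 in~\cite{AnderssonJentzenKurniawan2016arXiv}), whose constants depend on the coefficients only through $\|F_\varepsilon\|_{\Cb{n}(H,H_{-\alpha})}$, $\|B_\varepsilon\|_{\Cb{n}(H,HS(U,H_{-\beta}))}$ and the corresponding $\operatorname{Lip}^k$-seminorms, combined with the two displays above, these suprema are bounded uniformly in $\varepsilon\in(0,T]$. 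This establishes items~\eqref{item:mollified.transition.bound} and~\eqref{item:mollified.transition.lip}.

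The main obstacle is precisely this last point: items~\eqref{item:thm.derivative.apriori} and~\eqref{item:thm.derivative.lip} of Theorem~\ref{thm:derivative_formulas} are stated as plain finiteness assertions, so obtaining uniformity in $\varepsilon$ requires invoking the \emph{quantitative} form of the estimates in~\cite{AnderssonJentzenKurniawan2016a,AnderssonJentzenKurniawan2016arXiv} and checking that their constants involve $F_\varepsilon$, $B_\varepsilon$ through the negative-order Sobolev-type norms above rather than through, e.g., $\|F_\varepsilon\|_{\Cb{n}(H,H)}$, which does blow up as $\varepsilon\downarrow 0$. Should one prefer to avoid tracing these constants, an equivalent route is to re-run the mild-solution fixed-point/Gronwall argument directly for~\eqref{eq:SEE.derivative.mollified}, using $\|e^{(t-s+\varepsilon)A}\|_{L(H_{-r},H)}\le\|e^{(t-s)A}\|_{L(H_{-r},H)}$ for $r\in[0,1)$, $0\le s\le t\le T$, $\varepsilon>0$, to derive the $\varepsilon$-uniform moment and difference bounds for $X^{\varepsilon,k,\mathbf{u}}$ from scratch; these then feed into Theorem~\ref{thm:derivative_formulas}\eqref{item:thm2.derivative.bound}--\eqref{item:thm.trans.lip} exactly as above.
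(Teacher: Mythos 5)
Your proposal is correct and follows essentially the same route as the paper: the paper likewise sets $F_\varepsilon=e^{\varepsilon A}F$, $B_\varepsilon=e^{\varepsilon A}B$, observes the $\varepsilon$-uniform bounds on the $\Cb{k}(H,H_{-\alpha})$- and $\operatorname{Lip}^k$-seminorms, and feeds the quantitative constants (the $\Theta^\lambda_p$-expressions from item~(ii) and item~(iv) of Theorem~2.1 in~\cite{AnderssonJentzenKurniawan2016a} and item~(iii) of Corollary~2.10 in~\cite{AnderssonJentzenKurniawan2016arXiv}) into the explicit right-hand sides of the derivative and Lipschitz bounds of Lemma~\ref{lem:derivative_formulas}. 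The only bookkeeping you elide is that the quantitative estimate at level $k+1$ involves the lower-level process suprema, so the $\varepsilon$-uniform moment and increment bounds are obtained by induction on $k$; this is exactly the "tracing of constants" you correctly identify as the crux.
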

\begin{proof}
Throughout this proof let 
$ \deltaset{k} \in \mathcal{P}(\R^k) $, 
$ k \in \N $, 
be the sets which satisfy for all 
$ k \in \N $ 
that 
$
  \deltaset{k}=
  \{ 
    (\delta_1,\delta_2,\ldots,\delta_k) \in [0,\nicefrac{1}{2})^k \colon 
    \sum^k_{i=1} \delta_i < \nicefrac{1}{2}
  \}
$, 
let 
$
  \chi_r
  \in [0,\infty)
$, 
$ r \in [0,1] $, 
be the real numbers 
which satisfy for all 
$ r \in [0,1] $ 
that 
$
  \chi_r
  =
  \sup_{ t \in (0,T] }
    t^r
    \,
    \|
      ( \eta - A )^r
      e^{ t A }
    \|_{ L( H ) }
$ 
(see, e.g., Lemma~11.36 in Renardy \& Rogers~\cite{rr93}), 
let 
$
  \mathbbm{B} \colon (0,\infty)^2 \to (0,\infty)
$
be the function which satisfies for all $ x, y \in (0,\infty) $ that
$
  \mathbbm{B}( x, y ) = \int_0^1 t^{ (x - 1) } \left( 1 - t \right)^{ (y - 1) }
  \diffns t
$ 
(Beta function), 
let 
$
  \mathrm{E}_{ a, b } \colon [0,\infty) \to [0,\infty)
$, 
$ a, b \in (-\infty,1) $, 
be the functions which satisfy for all
$ a, b \in (-\infty,1) $, 
$
  x \in [0,\infty)
$
that
$
  \mathrm{E}_{ a, b }[ x ]
  =
  1
  +
  \sum_{ n = 1 }^{ \infty }
  x^n
  \prod_{ k = 0 }^{ n - 1 }
  \mathbb{B}\big(
    1-b
    ,
    k(1-b) + 1-a
  \big)
$
(generalized exponential function; cf.\ Chapter~7 in Henry~\cite{h81}
and~(16) in~\cite{AnderssonJentzenKurniawan2016arXiv}), 
let 
$F_\varepsilon\colon H \to H$, $\varepsilon\in(0,T]$, 
and 
$B_\varepsilon\colon H \to HS(U,H)$, $\varepsilon\in(0,T]$, 
be the functions which satisfy for all 
$\varepsilon\in(0,T]$, 
$x\in H$, 
$u\in U$ 
that 
\begin{equation}
  F_\varepsilon(x)
  =
  e^{\varepsilon A} F(x)
  \qquad
  \text{and}
  \qquad
  B_\varepsilon(x)u
  =
  e^{\varepsilon A} B(x)u
  ,
\end{equation}
and let 
$
  \Theta^{\lambda}_{p}
  \colon
  [0,\infty)^2 \to [0,\infty]
$, 
$ p \in [1,\infty) $, 
$ \lambda \in (-\infty,1) $, 
and 
$
\supertheta^{\lambda}_{p}
\colon
[0,\infty)^2 \to [0,\infty]
$, 
$ p \in [1,\infty) $, 
$ \lambda \in (-\infty,1) $, 
be the functions which satisfy for all 
$ \lambda \in (-\infty,1) $, 
$ p \in [1,\infty) $, 
$ L, \hat{L} \in [0,\infty) $
that
{\small
\begin{equation}
\begin{split}
&
  \Theta_{p}^{ \lambda }( L, \hat{L} )
  =
\\ &
\begin{cases}
  \sqrt{2}
  \,
  \bigg|
    E_{
      2 \lambda, \max\{ \alpha, 2 \beta \}
    }\bigg[
      \Big|
        \tfrac{
        \chi_\alpha\,
        L\,
          \sqrt{2}\,
          T^{ (1 - \alpha) }
        }
        {
          \sqrt{1-\alpha}
        }
        +
        \chi_\beta\,
        \hat{L}\,
        \sqrt{ p \, ( p - 1 ) \, T^{ (1 - 2\beta) } }
      \Big|^2
    \bigg]
  \bigg|^{1/2}
&
  \colon
  ( \lambda, \hat{L} )
  \in ( -\infty, \frac{ 1 }{ 2 } ) \times (0,\infty)
\\[1ex]
  E_{\lambda,\alpha}\!\left[
    \chi_\alpha\,
    L\,
    T^{ (1 - \alpha) }
  \right]
&
  \colon
  \hat{L} = 0
\\[1ex]
  \infty
&
  \colon
  \text{otherwise}
\end{cases}
\end{split}
\end{equation}
}(see, e.g., (17) in~\cite{AnderssonJentzenKurniawan2016arXiv})
and 
\begin{equation}
  \supertheta^\lambda_p(L,\hat{L})
  =
	  \sup_{x\in[0,L]}
	  \sup_{y\in[0,\hat{L}]}
	  \Theta^\lambda_p(x,y)  
	  .
\end{equation}
Note that for all 
	$ \lambda \in (-\infty,\nicefrac{1}{2}) $, 
	$ p \in [1,\infty) $, 
	$ L,\hat{L} \in [0,\infty) $ 
	it holds that 
\begin{equation}
\label{item:theta}
	  \supertheta^\lambda_p(L,\hat{L})
	  =
	  \max\!\big\{
	    \Theta^\lambda_p(L,\hat{L}),
	    \Theta^\lambda_p(L,0)
	  \big\}
	  < \infty.
\end{equation}
Moreover, observe that for all 
$ \varepsilon \in (0,T] $
it holds that 
\begin{equation}
\label{item:smooth}
  F_\varepsilon
  \in \Cb{n}(H,H)
  \qquad\text{and}\qquad
  B_\varepsilon \in \Cb{n}(H,HS(U,H))
  .
\end{equation}
In addition, note that for all 
$ k \in \{1,2,\ldots,n\} $, 
$ \varepsilon \in (0,T] $
it holds that 
\begin{equation}
\label{item:Cb}
\begin{split}
&
	  |F_\varepsilon|_{\Cb{k}(H,H_{-\alpha})}
	  \leq
	  \chi_0 \, |F|_{\Cb{k}(H,H_{-\alpha})} < \infty
	  \qquad\text{and}
\\&
	  |B_\varepsilon|_{\Cb{k}(H,HS(U,H_{-\beta}))}
	  \leq
	  \chi_0 \, |B|_{\Cb{k}(H,HS(U,H_{-\beta}))} < \infty.
\end{split}
\end{equation}
Furthermore, note that for all 
$ k \in \{0,1,\ldots,n\} $, 
$ \varepsilon \in (0,T] $
it holds that 
\begin{equation}
\label{item:Lip}
\begin{split}
&
	|F_\varepsilon|_{\operatorname{Lip}^k(H,H_{-\alpha})}
	\leq
	\chi_0 \, |F|_{\operatorname{Lip}^k(H,H_{-\alpha})}
	  \qquad\text{and}
\\&
	|B_\varepsilon|_{\operatorname{Lip}^k(H,HS(U,H_{-\beta}))}
	\leq
	\chi_0 \, |B|_{\operatorname{Lip}^k(H,HS(U,H_{-\beta}))}.  
\end{split}
\end{equation}
Item~\eqref{item:trans.smoothness} of Lemma~\ref{lem:derivative_formulas} 
(with 
$ n = n $, 
$ \varphi = \varphi $, 
$ F = F_\varepsilon $, 
$ B = B_\varepsilon $, 
$ X^{k,\mathbf{u}} = X^{\varepsilon,k,\mathbf{u}} $, 
$ \phi = \phi_\varepsilon $
$ t = t $
for 
$ t \in [0,T] $, 
$ \varepsilon \in (0,T] $, 
$ \mathbf{u} \in H^{ k + 1 } $, 
$ k \in \{0,1,\ldots,n\} $
in the notation of Lemma~\ref{lem:derivative_formulas})
and~\eqref{item:smooth} 
prove
item~\eqref{item:mollified.transition.derivative}.
Next we combine~\eqref{eq:SEE.derivative.mollified} and item~(iii) of Corollary~2.10 in~\cite{AnderssonJentzenKurniawan2016arXiv} 
(with 
$ H = H $, 
$ U = U $, 
$ T = T $, 
$ \eta = \eta $, 
$ \alpha = \alpha $, 
$ \beta = \beta $, 
$ W = W $, 
$ A = A $, 
$ F = (H \ni x \mapsto F_\varepsilon(x) \in H_{-\alpha}) $, 
$ B = (H \ni x \mapsto (U \ni u \mapsto B_\varepsilon(x)u \in H_{-\beta}) \in HS(U,H_{-\beta})) $, 
$ p = p $, 
$ \delta = 0 $
for 
$ \varepsilon \in (0,T] $, 
$ p \in [2,\infty) $
in the notation of Corollary~2.10 in~\cite{AnderssonJentzenKurniawan2016arXiv}) 
with~\eqref{item:theta} and~\eqref{item:Lip}
to obtain that for all $ p \in [2,\infty) $ it holds that 
\begin{equation}
\label{eq:lip.0th.derivative}
\begin{split}
&
\sup_{ \varepsilon, t \in (0,T] }
\sup_{\substack{x,y \in H, \\ x \neq y}}
\bigg[
\frac{\| X_t^{\varepsilon,0,x} - X_t^{\varepsilon,0,y} \|_{\lpn{p}{\P}{H}}}{\|x-y\|_H}
\bigg]
\\&\leq 
\chi_0
\sup_{\varepsilon\in(0,T]}
\Theta^0_p\big(|F_\varepsilon|_{\operatorname{Lip}^0(H,H_{-\alpha})},|B_\varepsilon|_{\operatorname{Lip}^0(H,HS(U,H_{-\beta}))}\big)
\\&\leq 
\chi_0
\sup_{\varepsilon\in(0,T]}
\supertheta^0_p\big(|F_\varepsilon|_{\operatorname{Lip}^0(H,H_{-\alpha})},|B_\varepsilon|_{\operatorname{Lip}^0(H,HS(U,H_{-\beta}))}\big)
\\&\leq 
\chi_0 \,
\supertheta^0_p\big(\chi_0 \, |F|_{\operatorname{Lip}^0(H,H_{-\alpha})},\chi_0 \, |B|_{\operatorname{Lip}^0(H,HS(U,H_{-\beta}))}\big)
<\infty
.
\end{split}
\end{equation}
Next we claim that 
\begin{enumerate}[(a)]
	\item
	\label{eq:apriori.higher.derivatives}
	it holds for all 
	$ k \in \{1,2,\ldots,n\} $, 
	$ p \in [2,\infty) $, 
	$ \boldsymbol{\delta} = (\delta_1,\delta_2,\ldots,\delta_k) \in \deltaset{k} $ 
	that 
	\begin{equation}
	\sup_{ \varepsilon, t \in (0,T] }
	\sup_{x\in H}
	\sup_{
		\mathbf{u}=(u_1,u_2,\ldots,u_k)
		\in (\nzspace{H})^k
	}
	\Bigg[
	\frac{
		t^{
			\iota^{\boldsymbol{\delta},\alpha,\beta}_\N
		}
		\|
		X_t^{\varepsilon,k,(x,\mathbf{u})}
		\|_{\mathcal{L}^{p}(\P;H)}
	}
	{
		\prod^k_{i=1}
		\|u_i\|_{H_{-\delta_i}}
	}
	\Bigg]
	< \infty
	\end{equation}
	and
	\item
	\label{eq:lip.higher.derivatives}
	it holds for all 
	$ k \in \{1,2,\ldots,n\} $, 
	$ p \in [2,\infty) $, 
	$ \boldsymbol{\delta} = (\delta_1,\delta_2,\ldots,\delta_k) \in \deltaset{k} $ 
	with 
	$ 
	|F|_{\operatorname{Lip}^k(H,H_{-\alpha})} 
	+ 
	|B|_{\operatorname{Lip}^k(H,HS(U,H_{-\beta}))} 
	< \infty 
	$
	that 
	\begin{equation}
	\sup_{ \varepsilon, t \in (0,T] }
	\sup_{\substack{x,y \in H, \\ x \neq y}}
	\sup_{
		\mathbf{u}=(u_1,u_2,\ldots,u_k)
		\in (\nzspace{H})^k
	}
	\Bigg[
	\frac{
		t^{ \iota^{(\boldsymbol{\delta},0),\alpha,\beta}_\N } \,
		\|X_t^{\varepsilon,k,(x,\mathbf{u})}-X_t^{\varepsilon,k,(y,\mathbf{u})}\|_{\lpn{p}{\P}{H}}
	}{
	\|x-y\|_H
	\prod^k_{i=1}
	\|u_i\|_{H_{-\delta_i}}
}
\Bigg]
< \infty.
\end{equation}
\end{enumerate}

We now prove item~\eqref{eq:apriori.higher.derivatives} and 
item~\eqref{eq:lip.higher.derivatives} by induction on 
$ k \in \{1,2,\ldots,n\} $. 
For the base case $k=1$ we combine ~\eqref{eq:SEE.derivative.mollified} and item~(ii) of 
Theorem~2.1 in~\cite{AnderssonJentzenKurniawan2016a} 
(with 
$ T = T $, 
$ \eta = \eta $, 
$ H = H $, 
$ U = U $, 
$ W = W $, 
$ A = A $, 
$ n = n $, 
$ F = F_\varepsilon $, 
$ B = B_\varepsilon $, 
$ \alpha = \alpha $, 
$ \beta = \beta $, 
$ k = 1 $, 
$ p = p $, 
$ \boldsymbol{\delta} = \delta $
for 
$ \varepsilon \in (0,T] $, 
$ \delta \in [0,\nicefrac{1}{2}) $, 
$ p \in [2,\infty) $, 
in the notation of Theorem~2.1 in~\cite{AnderssonJentzenKurniawan2016a})
with~\eqref{item:theta}--\eqref{item:Cb}
to obtain that for all 
$ p \in [2,\infty) $, 
$ \delta \in [0,\nicefrac{1}{2}) $
it holds that 
\begin{equation}
\label{eq:mollified.apriori.1}
\begin{split}
&
\sup_{ \varepsilon, t \in (0,T] }
\sup_{x\in H}
\sup_{
	u \in \nzspace{H}
}
\Bigg[
\frac{
	t^{
		\delta
	} \,
	\|
	X_t^{\varepsilon,1,(x,u)}
	\|_{\mathcal{L}^{p}(\P;H)}
}
{
	\|u\|_{H_{-\delta}}
}
\Bigg]
\\&\leq
\chi_\delta
\sup_{\varepsilon\in(0,T]}
\Theta_p^\delta( |F_\varepsilon|_{ \Cb{1}( H, H_{-\alpha} ) } 
, 
|B_\varepsilon|_{ \Cb{1}( H, HS( U, H_{-\beta} ) ) } )
\\&\leq
\chi_\delta
\sup_{\varepsilon\in(0,T]}
\supertheta_p^\delta( |F_\varepsilon|_{ \Cb{1}( H, H_{-\alpha} ) } 
, 
|B_\varepsilon|_{ \Cb{1}( H, HS( U, H_{-\beta} ) ) } )
\\&\leq
\chi_\delta \,
\supertheta_p^\delta( \chi_0 \, |F|_{ \Cb{1}( H, H_{-\alpha} ) } 
, 
\chi_0 \, |B|_{ \Cb{1}( H, HS( U, H_{-\beta} ) ) } )
< \infty.
\end{split}
\end{equation}
Moreover, combining~\eqref{eq:SEE.derivative.mollified} and item~(iv) of Theorem~2.1 in~\cite{AnderssonJentzenKurniawan2016a} 
(with 
$ T = T $, 
$ \eta = \eta $, 
$ H = H $, 
$ U = U $, 
$ W = W $, 
$ A = A $, 
$ n = n $, 
$ F = F_\varepsilon $, 
$ B = B_\varepsilon $, 
$ \alpha = \alpha $, 
$ \beta = \beta $, 
$ k = 1 $, 
$ p = p $, 
$ \boldsymbol{\delta} = \delta $
for 
$ \varepsilon \in (0,T] $, 
$ \delta \in [0,\nicefrac{1}{2}) $, 
$ 
p \in \{r\in[2,\infty)\colon
|F|_{\operatorname{Lip}^1(H,H_{-\alpha})} 
+ 
|B|_{\operatorname{Lip}^1(H,HS(U,H_{-\beta}))} < \infty 
\} 
$ 
in the notation of Theorem~2.1 
in~\cite{AnderssonJentzenKurniawan2016a})
with~\eqref{item:theta}--\eqref{item:Lip} and~\eqref{eq:mollified.apriori.1}
assures that for all 
$ p \in [2,\infty) $, 
$ \delta \in [0,\nicefrac{1}{2}) $ 
with 
$ 
|F|_{\operatorname{Lip}^1(H,H_{-\alpha})} 
+ 
|B|_{\operatorname{Lip}^1(H,HS(U,H_{-\beta}))} < \infty 
$
it holds that 
\begin{equation}
\begin{split}
&
\sup_{ \varepsilon, t \in (0,T] }
\sup_{\substack{x,y \in H, \\ x \neq y}}
\sup_{
	u \in \nzspace{H}
}
\Bigg[
\frac{
	t^{ \iota^{(\delta,0),\alpha,\beta}_\N } \,
	\|X_t^{\varepsilon,1,(x,u)}-X_t^{\varepsilon,1,(y,u)}\|_{\lpn{p}{\P}{H}}
}{
\|x-y\|_H \,
\|u\|_{H_{-\delta}}
}
\Bigg]
\\&\leq
| T \vee 1 |
\sup_{\varepsilon\in(0,T]}
\Bigg\{
\Theta_p^{ 
	\iota^{(\delta,0),\alpha,\beta}_\N
}\!\big(
| F_\varepsilon |_{ \Cb{1}( H , H_{-\alpha} ) }
,
| B_\varepsilon |_{ \Cb{1}( H , HS( U , H_{-\beta} ) ) }
\big)
\\&\cdot
\chi_0\,
\Theta^0_p
\big(
| F_\varepsilon |_{ \Cb{1}( H , H_{-\alpha} ) }
,
| B_\varepsilon |_{ \Cb{1}( H , HS( U , H_{-\beta} ) ) }
\big)
\sup_{ t \in (0,T] }
\sup_{x\in H}
\sup_{
	u \in \nzspace{H}
}
\Bigg[
\frac{
	t^\delta\,
	\|
	X_t^{ \varepsilon,1,(x,u) }
	\|_{ \lpn{2p}{\P}{H} }
}{
\| u \|_{ H_{ -\delta } }
}
\Bigg]
\\&\cdot
\bigg[
\chi_\alpha \,
\mathbbm{B}\big(
1 - \alpha
,
1 - \delta
\big)
\| F_\varepsilon \|_{ 
	\operatorname{Lip}^1( H, H_{-\alpha} ) 
}
+
\chi_\beta \,
\sqrt{
	\tfrac{p \, ( p - 1 )}{2}
	\,
	\mathbbm{B}\big(
	1 - 2\beta
	,
	1 
	- 
	2 \delta
	\big)
} \,
\| B_\varepsilon \|_{ 
	\operatorname{Lip}^1( H, HS( U, H_{-\beta} ) ) 
}
\bigg]
\Bigg\}
\\&\leq
| T \vee 1 | \, |\chi_0|^2 \,
\supertheta_p^{ 
	\iota^{(\delta,0)}_\N
}\!\big(
\chi_0 \, | F |_{ \Cb{1}( H , H_{-\alpha} ) }
,
\chi_0 \, | B |_{ \Cb{1}( H , HS( U , H_{-\beta} ) ) }
\big)
\\&\cdot
\supertheta^0_p
\big(
\chi_0 \, | F |_{ \Cb{1}( H , H_{-\alpha} ) }
,
\chi_0 \, | B |_{ \Cb{1}( H , HS( U , H_{-\beta} ) ) }
\big)
\sup_{ \varepsilon, t \in (0,T] }
\sup_{x\in H}
\sup_{
	u \in \nzspace{H}
}
\Bigg[
\frac{
	t^\delta\,
	\|
	X_t^{ \varepsilon,1,(x,u) }
	\|_{ \lpn{2p}{\P}{H} }
}{
\| u \|_{ H_{ -\delta } }
}
\Bigg]
\\&\cdot
\bigg[
\chi_\alpha \,
\mathbbm{B}\big(
1 - \alpha
,
1 - \delta
\big)
\| F \|_{ 
	\operatorname{Lip}^1( H, H_{-\alpha} ) 
}
+
\chi_\beta \,
\sqrt{
	\tfrac{p \, ( p - 1 )}{2}
	\,
	\mathbbm{B}\big(
	1 - 2\beta
	,
	1 
	- 
	2 \delta
	\big)
} \,
\| B \|_{ 
	\operatorname{Lip}^1( H, HS( U, H_{-\beta} ) ) 
}
\bigg]
\\&< \infty.
\end{split}
\end{equation}
This and~\eqref{eq:mollified.apriori.1} establish item~\eqref{eq:apriori.higher.derivatives} and item~\eqref{eq:lip.higher.derivatives} in the base case $k=1$.
For the induction step 
$ \{1,2,\ldots,n-1\} \ni k \to k+1 \in \{2,3,\ldots,n\} $ 
assume that there exists a natural number 
$ k \in \{1,2,\ldots,n-1\} $
such that item~\eqref{eq:apriori.higher.derivatives} and item~\eqref{eq:lip.higher.derivatives} hold for all 
$ l \in \{1,2,\ldots,k\} $. 
Observe that~\eqref{eq:SEE.derivative.mollified}, item~(ii) of Theorem~2.1 in~\cite{AnderssonJentzenKurniawan2016a} 
(with 
$ T = T $, 
$ \eta = \eta $, 
$ H = H $, 
$ U = U $, 
$ W = W $, 
$ A = A $, 
$ n = n $, 
$ F = F_\varepsilon $, 
$ B = B_\varepsilon $, 
$ \alpha = \alpha $, 
$ \beta = \beta $, 
$ k = k+1 $, 
$ p = p $, 
$ \boldsymbol{\delta} = \boldsymbol{\delta} $
for 
$ \varepsilon \in (0,T] $, 
$ \boldsymbol{\delta} \in \deltaset{k+1} $, 
$ p \in [2,\infty) $
in the notation of Theorem~2.1 
in~\cite{AnderssonJentzenKurniawan2016a}), 
the induction step, and~\eqref{item:theta}--\eqref{item:Cb}
imply that for all 
$ p \in [2,\infty) $, 
$ \boldsymbol{\delta} = (\delta_1,\delta_2,\ldots,\delta_{k+1}) \in \deltaset{k+1} $ 
it holds that 
\begin{equation}
\label{eq:mollified.apriori.n}
\begin{split}
&
\sup_{ \varepsilon, t \in (0,T] }
\sup_{x\in H}
\sup_{
	\mathbf{u}=(u_1,u_2,\ldots,u_{k+1})
	\in (\nzspace{H})^{k+1}
}
\Bigg[
\frac{
	t^{
		\iota^{\boldsymbol{\delta},\alpha,\beta}_\N
	}
	\|
	X_t^{\varepsilon,k+1,(x,\mathbf{u})}
	\|_{\mathcal{L}^{p}(\P;H)}
}
{
	\prod^{k+1}_{i=1}
	\|u_i\|_{H_{-\delta_i}}
}
\Bigg]
\\&\leq
|T\vee 1|^{k+1}
\sup_{\varepsilon\in(0,T]}
\bigg\{
\Theta_p^{
	\iota^{ \boldsymbol{\delta} }_\N
}( |F_\varepsilon|_{ \Cb{1}( H, H_{-\alpha} ) } 
, 
|B_\varepsilon|_{ \Cb{1}( H, HS( U, H_{-\beta} ) ) } )
\\&\quad\cdot
\bigg[
\chi_\alpha \,
\mathbbm{B}\big(
1 - \alpha
,
1 - \smallsum_{ i = 1 }^{ k+1 } \delta_i
\big)
\| F_\varepsilon \|_{ 
	\Cb{ k+1 }( H, H_{-\alpha} ) 
}
\\&\quad+
\chi_\beta \,
\sqrt{
	\tfrac{p \, ( p - 1 )}{2}
	\,
	\mathbbm{B}\big(
	1 - 2\beta
	,
	1 
	- 
	2 \smallsum_{ i = 1 }^{ k+1 } \delta_i
	\big)
} \,
\| B_\varepsilon \|_{ 
	\Cb{ k+1 }( H, HS( U, H_{-\beta} ) ) 
}
\bigg]
\\ & \quad
\cdot
\sum_{ \varpi \in \Pi_{k+1}^{ * } }
\prod_{ I \in \varpi }
\sup\limits_{ t \in (0,T] }
\sup_{ x \in H }
\sup_{ \mathbf{u} = ( u_i )_{ i \in I  } \in (\nzspace{H})^{\#_I} }
\bigg[
\displaystyle
\frac{
	t^{
		\iota^{ \boldsymbol{\delta},\alpha,\beta }_I
	} \,
	\|
	X_t^{ \varepsilon, \#_I, (x,\mathbf{u}) }
	\|_{
		\mathcal{L}^{ p \, \#_\varpi }( \P ; H )
	}
}{
\prod_{ i \in I }
\| u_i \|_{ H_{ - \delta_i } }
}
\bigg]
\bigg\}
\\&\leq
|T\vee 1|^{k+1} \, \chi_0 \,
\supertheta_p^{
	\iota^{ \boldsymbol{\delta} }_\N
}( \chi_0 \, |F|_{ \Cb{1}( H, H_{-\alpha} ) } 
, 
\chi_0 \, |B|_{ \Cb{1}( H, HS( U, H_{-\beta} ) ) } )
\\&\quad\cdot
\bigg[
\chi_\alpha \,
\mathbbm{B}\big(
1 - \alpha
,
1 - \smallsum_{ i = 1 }^{ k+1 } \delta_i
\big)
\| F \|_{ 
	\Cb{ k+1 }( H, H_{-\alpha} ) 
}
\\&\quad+
\chi_\beta \,
\sqrt{
	\tfrac{p \, ( p - 1 )}{2}
	\,
	\mathbbm{B}\big(
	1 - 2\beta
	,
	1 
	- 
	2 \smallsum_{ i = 1 }^{ k+1 } \delta_i
	\big)
} \,
\| B \|_{ 
	\Cb{ k+1 }( H, HS( U, H_{-\beta} ) ) 
}
\bigg]
\\ & \quad
\cdot
\sum_{ \varpi \in \Pi_{k+1}^{ * } }
\prod_{ I \in \varpi }
\sup\limits_{ \varepsilon, t \in (0,T] }
\sup_{ x \in H }
\sup_{ \mathbf{u} = ( u_i )_{ i \in I  } \in (\nzspace{H})^{\#_I} }
\bigg[
\displaystyle
\frac{
	t^{
		\iota^{ \boldsymbol{\delta},\alpha,\beta }_I
	} \,
	\|
	X_t^{ \varepsilon, \#_I, (x,\mathbf{u}) }
	\|_{
		\mathcal{L}^{ p \, \#_\varpi }( \P ; H )
	}
}{
\prod_{ i \in I }
\| u_i \|_{ H_{ - \delta_i } }
}
\bigg]
< \infty.
\end{split}
\end{equation}
Furthermore, note that~\eqref{eq:SEE.derivative.mollified}, item~(iv) of Theorem~2.1 in~\cite{AnderssonJentzenKurniawan2016a} 
(with 
$ T = T $, 
$ \eta = \eta $, 
$ H = H $, 
$ U = U $, 
$ W = W $, 
$ A = A $, 
$ n = n $, 
$ F = F_\varepsilon $, 
$ B = B_\varepsilon $, 
$ \alpha = \alpha $, 
$ \beta = \beta $, 
$ k = k+1 $, 
$ p = p $, 
$ \boldsymbol{\delta} = \boldsymbol{\delta} $
for 
$ \varepsilon \in (0,T] $, 
$ \boldsymbol{\delta} \in \deltaset{k+1} $, 
$ p \in \{r\in[2,\infty)\colon
|F|_{\operatorname{Lip}^{k+1}(H,H_{-\alpha})} 
+ 
|B|_{\operatorname{Lip}^{k+1}(H,HS(U,H_{-\beta}))} < \infty 
\} $ 
in the notation of Theorem~2.1 
in~\cite{AnderssonJentzenKurniawan2016a}), 
\eqref{item:smooth}, and \eqref{item:Lip}
imply that for all 
$ p \in [2,\infty) $, 
$ \boldsymbol{\delta} = (\delta_1,\delta_2,\ldots,\delta_{k+1}) \in \deltaset{k+1} $ 
with 
$ 
|F|_{\operatorname{Lip}^{k+1}(H,H_{-\alpha})} 
+ 
|B|_{\operatorname{Lip}^{k+1}(H,HS(U,H_{-\beta}))} < \infty 
$
it holds that 
\begin{equation}
\begin{split}
&
\sup_{ \varepsilon, t \in (0,T] }
\sup_{\substack{x,y \in H, \\ x \neq y}}
\sup_{
	\mathbf{u}=(u_1,u_2,\ldots,u_{k+1})
	\in (\nzspace{H})^{k+1}
}
\Bigg[
\frac{
	t^{ \iota^{(\boldsymbol{\delta},0),\alpha,\beta}_\N } \,
	\|X_t^{\varepsilon,k+1,(x,\mathbf{u})}-X_t^{\varepsilon,k+1,(y,\mathbf{u})}\|_{\lpn{p}{\P}{H}}
}{
\|x-y\|_H
\prod^{k+1}_{i=1}
\|u_i\|_{H_{-\delta_i}}
}
\Bigg]
\\&\leq
| T \vee 1 |^{k+1}
\sup_{\varepsilon\in(0,T]}
\Bigg\{
\Theta_p^{ 
	\iota^{(\boldsymbol{\delta},0)}_\N
}\big(
| F_\varepsilon |_{ \Cb{1}( H , H_{-\alpha} ) }
,
| B_\varepsilon |_{ \Cb{1}( H , HS( U , H_{-\beta} ) ) }
\big)
\\&\cdot
\Bigg(
\chi_0\,
\Theta^0_p
\big(
| F_\varepsilon |_{ \Cb{1}( H , H_{-\alpha} ) }
,
| B_\varepsilon |_{ \Cb{1}( H , HS( U , H_{-\beta} ) ) }
\big)
\\&\cdot
\sum_{
	\varpi \in \Pi_{k+1}
}
\prod_{ I\in\varpi }
\sup_{ t\in (0,T] }
\sup_{ x \in H }
\sup_{ \mathbf{u} = ( u_i )_{ i \in I  } \in (\nzspace{H})^{\#_I} }
\bigg[
\displaystyle
\frac{
	t^{
		\iota^{ \boldsymbol{\delta},\alpha,\beta }_I
	} \,
	\|
	X_t^{ \varepsilon, \#_I, (x,\mathbf{u}) }
	\|_{
		\mathcal{L}^{ p (\#_\varpi+1) }( \P ; H )
	}
}{
\prod_{ i \in I }
\| u_i \|_{ H_{ - \delta_i } }
}
\bigg]
\\&+
\sum_{ \varpi \in \Pi^*_{k+1} }
\sum_{ I \in \varpi }
\sup_{ t \in (0,T] }
\sup_{\substack{
		x,y \in H,
		\\
		x\neq y
	}}
	\sup_{\mathbf{u}=(u_i)_{i\in I}\in (\nzspace{H})^{\#_I}}
	\Bigg[
	\frac{
		t^{
			\iota_{ I \cup \{k+2\} }^{ (\boldsymbol{\delta},0),\alpha,\beta } 
		}
		\|
		X_t^{ \varepsilon,\#_I, ( x, \mathbf{u} ) }
		-
		X_t^{ \varepsilon,\#_I, ( y, \mathbf{u} ) }
		\|_{
			\mathcal{L}^{ p \#_\varpi }( \P ; H )
		}
	}{
	\| x-y \|_{ H }
	\prod_{i\in I}
	\| u_i \|_{ H_{ - \delta_i } }
}
\Bigg]
\\&\cdot
\prod_{ J\in\varpi\setminus\{I\} }
\sup_{ t\in (0,T] }
\sup_{ x \in H }
\sup_{ \mathbf{u} = ( u_i )_{ i \in J  } \in (\nzspace{H})^{\#_J} }
\Bigg[
\frac{
	t^{ \iota^{ \boldsymbol{\delta},\alpha,\beta }_J }\,
	\|
	X_t^{ \varepsilon,\#_J, (x,\mathbf{u}) }
	\|_{ \lpn{p\#_\varpi}{\P}{H} }
}{
\prod_{ i\in J }
\| u_i \|_{ H_{ -\delta_i } }
}
\Bigg]
\Bigg)
\\&\cdot
\bigg[
\chi_\alpha\,
\mathbb{B}\big(
1-\alpha
,
1-\smallsum^{k+1}_{ i=1 }\delta_i
\big) \,
\|F_\varepsilon\|_{ \operatorname{Lip}^{k+1}(H,H_{-\alpha}) }
\\&+
\chi_\beta\,
\sqrt{
	\tfrac{p\,(p-1)}{2}
	\,
	\mathbb{B}\big(
	1-2\beta
	,
	1
	-
	2\smallsum^{k+1}_{i=1}\delta_i
	\big)
}\,
\|B_\varepsilon\|_{ \operatorname{Lip}^{k+1}(H,HS(U,H_{-\beta})) }
\bigg]
\Bigg\}
.
\end{split}
\end{equation}
This, the induction hypothesis,
\eqref{item:theta}, \eqref{item:Cb}, \eqref{item:Lip}, 
and~\eqref{eq:mollified.apriori.n}
imply that for all 
$ p \in [2,\infty) $, 
$ \boldsymbol{\delta} = (\delta_1,\delta_2,\ldots,\delta_{k+1}) \in \deltaset{k+1} $ 
with 
$ 
|F|_{\operatorname{Lip}^{k+1}(H,H_{-\alpha})} 
+ 
|B|_{\operatorname{Lip}^{k+1}(H,HS(U,H_{-\beta}))} < \infty 
$
it holds that 
\begin{equation}
\label{eq:mollified.lip.n}
\begin{split}
&
\sup_{ \varepsilon, t \in (0,T] }
\sup_{\substack{x,y \in H, \\ x \neq y}}
\sup_{
	\mathbf{u}=(u_1,u_2,\ldots,u_{k+1})
	\in (\nzspace{H})^{k+1}
}
\Bigg[
\frac{
	t^{ \iota^{(\boldsymbol{\delta},0),\alpha,\beta}_\N } \,
	\|X_t^{\varepsilon,k+1,(x,\mathbf{u})}-X_t^{\varepsilon,k+1,(y,\mathbf{u})}\|_{\lpn{p}{\P}{H}}
}{
\|x-y\|_H
\prod^{k+1}_{i=1}
\|u_i\|_{H_{-\delta_i}}
}
\Bigg]
\\&\leq
| T \vee 1 |^{k+1} \, \chi_0
\,
\supertheta_p^{ 
	\iota^{(\boldsymbol{\delta},0)}_\N
}\!\big(
\chi_0 \, | F |_{ \Cb{1}( H , H_{-\alpha} ) }
,
\chi_0 \, | B |_{ \Cb{1}( H , HS( U , H_{-\beta} ) ) }
\big)
\\&\cdot
\Bigg(
\chi_0\,
\supertheta^0_p
\big(
\chi_0 \, | F |_{ \Cb{1}( H , H_{-\alpha} ) }
,
\chi_0 \, | B |_{ \Cb{1}( H , HS( U , H_{-\beta} ) ) }
\big)
\\&\cdot
\sum_{
	\varpi \in \Pi_{k+1}
}
\prod_{ I\in\varpi }
\sup_{ \varepsilon,t\in (0,T] }
\sup_{ x \in H }
\sup_{ \mathbf{u} = ( u_i )_{ i \in I  } \in (\nzspace{H})^{\#_I} }
\Bigg[
\frac{
	t^{ \iota^{ \boldsymbol{\delta},\alpha,\beta }_I }\,
	\|
	X_t^{ \varepsilon, \#_I, (x,\mathbf{u}) }
	\|_{ \lpn{p(\#_\varpi+1)}{\P}{H} }
}{
\prod_{ i\in I }
\| u_i \|_{ H_{ -\delta_i } }
}
\Bigg]
\\&+
\sum_{ \varpi \in \Pi^*_{k+1} }
\sum_{ I \in \varpi }
\sup_{ \varepsilon, t \in (0,T] }
\sup_{\substack{
		x,y \in H,
		\\
		x\neq y
	}}
	\sup_{\mathbf{u}=(u_i)_{i\in I}\in (\nzspace{H})^{\#_I}}
	\Bigg[
	\frac{
		t^{
			\iota_{ I \cup \{k+2\} }^{ (\boldsymbol{\delta},0),\alpha,\beta } 
		}
		\|
		X_t^{ \varepsilon, \#_I, ( x, \mathbf{u} ) }
		-
		X_t^{ \varepsilon, \#_I, ( y, \mathbf{u} ) }
		\|_{
			\mathcal{L}^{ p \#_\varpi }( \P ; H )
		}
	}{
	\| x-y \|_{ H }
	\prod_{i\in I}
	\| u_i \|_{ H_{ - \delta_i } }
}
\Bigg]
\\&\cdot
\prod_{ J\in\varpi\setminus\{I\} }
\sup_{ \varepsilon, t\in (0,T] }
\sup_{ x \in H }
\sup_{ \mathbf{u} = ( u_i )_{ i \in J  } \in (\nzspace{H})^{\#_J} }
\Bigg[
\frac{
	t^{ \iota^{ \boldsymbol{\delta},\alpha,\beta }_J }\,
	\|
	X_t^{ \varepsilon, \#_J, (x,\mathbf{u}) }
	\|_{ \lpn{p\#_\varpi}{\P}{H} }
}{
\prod_{ i\in J }
\| u_i \|_{ H_{ -\delta_i } }
}
\Bigg]
\Bigg)
\\&\cdot
\bigg[
\chi_\alpha\,
\mathbb{B}\big(
1-\alpha
,
1-\smallsum^{k+1}_{ i=1 }\delta_i
\big) \,
\|F\|_{ \operatorname{Lip}^{k+1}(H,H_{-\alpha}) }
\\&+
\chi_\beta\,
\sqrt{
	\tfrac{p\,(p-1)}{2}
	\,
	\mathbb{B}\big(
	1-2\beta
	,
	1
	-
	2\smallsum^{k+1}_{i=1}\delta_i
	\big)
}\,
\|B\|_{ \operatorname{Lip}^{k+1}(H,HS(U,H_{-\beta})) }
\bigg]
< \infty.
\end{split}
\end{equation}
Combining~\eqref{eq:mollified.apriori.n} with~\eqref{eq:mollified.lip.n}
proves item~\eqref{eq:apriori.higher.derivatives} and item~\eqref{eq:lip.higher.derivatives} in the case $k+1$. 
Induction hence establishes item~\eqref{eq:apriori.higher.derivatives} and item~\eqref{eq:lip.higher.derivatives}.

Next note that item~\eqref{item:thm.derivative.bound} of Lemma~\ref{lem:derivative_formulas} 
(with 
$ n = n $, 
$ \varphi = \varphi $, 
$ F = F_\varepsilon $, 
$ B = B_\varepsilon $, 
$ X^{m,\mathbf{u}} = X^{\varepsilon,m,\mathbf{u}} $, 
$ \phi=\phi_\varepsilon $, 
$ k = k $, 
$ \boldsymbol{\delta} = \boldsymbol{\delta} $, 
$ \alpha = \alpha $, 
$ \beta = \beta $
for 
$ \boldsymbol{\delta} \in \deltaset{k} $, 
$ k \in \{1,2,\ldots,n\} $, 
$ \mathbf{u} \in H^{ m + 1 } $, 
$ m \in \{0,1,\ldots,n\} $, 
$ \varepsilon \in (0,T] $ 
in the notation of Lemma~\ref{lem:derivative_formulas}), 
\eqref{item:smooth},
item~\eqref{eq:apriori.higher.derivatives}, and Jensen's inequality
ensure that for all 
$k\in\{1,2,\ldots,n\}$, 
$ \boldsymbol{\delta}=(\delta_1,\delta_2,\ldots,\delta_k) \in \deltaset{k} $ 
it holds that 
\begin{equation}
\begin{split}
&
\sup_{ \varepsilon, t \in (0,T] }
\sup_{ v \in H }
\sup_{ \mathbf{u}=(u_1,u_2,\ldots,u_k) \in (\nzspace{H})^k }
\left[
\frac{
	t^{ \sum^k_{i=1} \delta_i } \,
	\big\|
	\big(
	\tfrac{\partial^k}{\partial x^k}
	\transmol{\varepsilon}
	\big)(t,v)
	\mathbf{u}
	\big\|_V
}{
\prod^k_{i=1} \|u_i\|_{ H_{-\delta_i} }
}
\right]
\\&\leq
|T \vee 1|^{
	\lfloor k/2 \rfloor \,
	\min\{1-\alpha,\nicefrac{1}{2}-\beta\}
} \,
\|
\varphi
\|_{
	\Cb{k}(H,{V})
}
\\&\cdot
\sum_{
	\varpi \in \Pi_k
}
\prod_{
	I \in \varpi
}
\sup_{\varepsilon, t\in(0,T]}
\sup_{x\in H}
\sup_{
	\mathbf{u}=(u_i)_{i\in I}
	\in (\nzspace{H})^{\#_I}
}
\Bigg[
\frac{
	t^{
		\iota^{\boldsymbol{\delta},\alpha,\beta}_I
	}
	\|
	X_t^{\varepsilon,\#_I,(x,\mathbf{u})}
	\|_{\mathcal{L}^{\#_\varpi}(\P;H)}
}
{
	\prod_{i\in I}
	\|u_i\|_{H_{-\delta_i}}
}
\Bigg]
<\infty.
\end{split}
\end{equation}
This proves item~\eqref{item:mollified.transition.bound}.
It thus remains to prove item~\eqref{item:mollified.transition.lip}. 
For this we combine 
item~\eqref{item:trans.lip} of Lemma~\ref{lem:derivative_formulas} 
(with 
$ n = n $, 
$ \varphi = \varphi $, 
$ F = F_\varepsilon $, 
$ B = B_\varepsilon $, 
$ X^{m,\mathbf{u}} = X^{\varepsilon,m,\mathbf{u}} $, 
$ \phi=\phi_\varepsilon $, 
$ k = k $, 
$ \boldsymbol{\delta} = \boldsymbol{\delta} $, 
$ \alpha = \alpha $, 
$ \beta = \beta $
for 
$ \boldsymbol{\delta} \in \deltaset{k} $, 
$ k \in \{l\in\{1,2,\ldots,n\}\colon
|F|_{\operatorname{Lip}^l(H,H_{-\alpha})} 
+ 
|B|_{\operatorname{Lip}^l(H,HS(U,H_{-\beta}))} 
+
|\varphi|_{\operatorname{Lip}^l(H,V)} 
< \infty 
\} $, 
$ \mathbf{u} \in H^{ m + 1 } $, 
$ m \in \{0,1,\ldots,n\} $, 
$ \varepsilon \in (0,T] $
in the notation of Lemma~\ref{lem:derivative_formulas})
with~\eqref{item:smooth}, \eqref{item:Lip}, \eqref{eq:lip.0th.derivative}, item~\eqref{eq:apriori.higher.derivatives}, item~\eqref{eq:lip.higher.derivatives}, 
and Jensen's inequality to obtain that for all 
$k\in\{1,2,\ldots,n\}$, 
$ \boldsymbol{\delta}=(\delta_1,\delta_2,\ldots,\delta_k) \in \deltaset{k} $ 
with 
$ 
|F|_{\operatorname{Lip}^k(H,H_{-\alpha})} 
+ 
|B|_{\operatorname{Lip}^k(H,HS(U,H_{-\beta}))} 
+
|\varphi|_{\operatorname{Lip}^k(H,V)} 
< \infty 
$
it holds that 
\begin{equation}
\begin{split}
&
\sup_{ \varepsilon, t \in (0,T] }
\sup_{\substack{ v,w \in H, \\ v \neq w }}
\sup_{ \mathbf{u}=(u_1,u_2,\ldots,u_k) \in (\nzspace{H})^k }
\left[
\frac{
	t^{ \sum^k_{i=1} \delta_i } \,
	\big\|
	\big[
	\big(
	\tfrac{\partial^k}{\partial x^k}
	\transmol{\varepsilon}
	\big)(t,v)
	-
	\big(
	\tfrac{\partial^k}{\partial x^k}
	\transmol{\varepsilon}
	\big)(t,w)
	\big]
	\mathbf{u}
	\big\|_V
}{
\|v-w\|_H
\prod^k_{i=1} \|u_i\|_{ H_{-\delta_i} }
}
\right]
\\&\leq
|T\vee 1|^{
	\lceil k/2 \rceil \, 
	\min\{1-\alpha,\nicefrac{1}{2}-\beta\}
} \,
\|\varphi\|_{ \operatorname{Lip}^k(H,V) }
\\&\cdot
\sum_{
	\varpi \in \Pi_k
}
\Bigg\{
\sup_{ \varepsilon, t \in (0,T] }
\sup_{\substack{x,y \in H, \\ x \neq y}}
\Bigg[
\frac{\| X_t^{\varepsilon,0,x} - X_t^{\varepsilon,0,y} \|_{\lpn{\#_\varpi+1}{\P}{H}}}{\|x-y\|_H}
\Bigg]
\\&\cdot
\prod_{I\in\varpi} 
\sup_{ \varepsilon,t \in (0,T] }
\sup_{ x \in H }
\sup_{ \mathbf{u}=(u_i)_{i\in I} \in (\nzspace{H})^{\#_I} }
\Bigg[
\frac{
	t^{ \iota^{\boldsymbol{\delta},\alpha,\beta}_I } \,
	\|X_t^{\varepsilon,\#_I,(x,\mathbf{u})}\|_{\lpn{\#_\varpi+1}{\P}{H}}
}{
\prod_{i\in I}
\|u_i\|_{H_{-\delta_i}}
}
\Bigg]
\\&+
\sum_{I\in\varpi}
\sup_{ \varepsilon,t \in (0,T] }
\sup_{\substack{x,y \in H, \\ x \neq y}}
\sup_{ \mathbf{u}=(u_i)_{i\in I} \in (\nzspace{H})^{\#_I} }
\Bigg[
\frac{
	t^{ \iota^{(\boldsymbol{\delta},0),\alpha,\beta}_{I\cup\{k+1\}} } \,
	\|X_t^{\varepsilon,\#_I,(x,\mathbf{u})}-X_t^{\varepsilon,\#_I,(y,\mathbf{u})}\|_{\lpn{\#_\varpi}{\P}{H}}
}{
\|x-y\|_H
\prod_{i\in I}
\|u_i\|_{H_{-\delta_i}}
}
\Bigg]
\\&\cdot
\prod_{ J \in \varpi \setminus \{I\} } 
\sup_{ \varepsilon,t \in (0,T] }
\sup_{ x \in H }
\sup_{ \mathbf{u}=(u_i)_{i\in J} \in (\nzspace{H})^{\#_J} }
\Bigg[
\frac{
	t^{ \iota^{\boldsymbol{\delta},\alpha,\beta}_J } \,
	\|X_t^{\varepsilon,\#_J,(x,\mathbf{u})}\|_{\lpn{\#_\varpi}{\P}{H}}
}{
\prod_{i\in J}
\|u_i\|_{H_{-\delta_i}}
}
\Bigg]
\Bigg\}
< \infty.
\end{split}
\end{equation}
This proves item~\eqref{item:mollified.transition.lip}. 
The proof of Theorem~\ref{thm:mollified.transition} is thus completed.
\end{proof}

\begin{corollary}
	\label{thm:mollified.transition}
	Assume the setting in Section~\ref{sec:global_setting} 
	and let $ n \in \N $, 
	$ \alpha \in [0,1) $, 
	$ \beta \in [0,\nicefrac{1}{2}) $, 
	$ F \in \Cb{n}(H,H_{-\alpha}) $,
	$ B \in \Cb{n}(H,HS(U,H_{-\beta})) $, 
	$ \varphi \in \Cb{n}(H,V) $. 
	Then 
	\begin{enumerate}[(i)]
		\item
		\label{item:cor.mollified.existence}
		it holds that there exist up-to-modifications unique $(\mathcal{F}_t)_{t\in[0,T]}$/$ \mathcal{B}(H) $-predictable stochastic processes
		$
		X^{ \varepsilon, x }
		\colon
		[ 0 , T ] \times \Omega
		\to H
		$, 
		$ \varepsilon \in (0,T] $, 
		$
		x \in H
		$, 
		which satisfy for all 
		$ \varepsilon \in (0,T] $, 
		$ p \in (0,\infty) $,
		$
		x \in H
		$, 
		$ t \in [0,T] $
		that
		$
		\sup_{s\in[0,T]}
		\E\big[\|X^{\varepsilon,x}_s\|^p_H\big]
		< \infty
		$ 
		and 
		\begin{equation}
		\begin{split}
		&
		[
		X_t^{\varepsilon,x}
		-
		e^{tA} x
		]_{ \P,\mathcal{B}(H) }
		=
		\int_0^t
		e^{ ( t - s + \varepsilon ) A }
		F(X_s^{\varepsilon,x})
		\,{\bf ds}
		+
		\int_0^t
		e^{ ( t - s + \varepsilon ) A }
		B(X_s^{\varepsilon,x})
		\, \diffns W_s
		,
		\end{split}
		\end{equation}
		\item
		\label{item:cor.mollified.TPS}
		it holds that there exist unique functions 
		$
		\transmol{\varepsilon} \colon [0,T] \times H \to V
		$, 
		$ \varepsilon \in (0,T] $, 
		which satisfy for all 
		$ \varepsilon \in (0,T] $, 
		$ t \in [0,T] $, 
		$ x \in H $ 
		that 
		$
		\transmol{\varepsilon}(t,x)
		=
		\E[\varphi(X^{\varepsilon,x}_t)]
		$,
		\item
		\label{item:cor.mollified.transition.derivative}
		it holds for all 
		$ \varepsilon \in (0,T] $, 
		$ t \in [0,T] $
		that 
		$
		\big(
		H \ni x \mapsto
		\transmol{\varepsilon}(t,x) \in V
		\big)
		\in \Cb{n}(H,V)
		$, 
		\item
		\label{item:cor.mollified.transition.bound}
		it holds for all 
		$ k \in \{1,2,\ldots,n\} $, 
		$ \delta_1, \delta_2, \ldots, \delta_k \in [0,\nicefrac{1}{2})  $ 
		with 
		$ \sum^k_{i=1} \delta_i < \nicefrac{1}{2} $ 
		that 
		\begin{equation}
		\sup_{ \varepsilon, t \in (0,T] }
		\sup_{ v \in H }
		\sup_{ \mathbf{u}=(u_1,u_2,\ldots,u_k) \in (\nzspace{H})^k }
		\left[
		\frac{
			t^{ \sum^k_{i=1} \delta_i } \,
			\big\|
			\big(
			\tfrac{\partial^k}{\partial x^k}
			\transmol{\varepsilon}
			\big)(t,v)
			\mathbf{u}
			\big\|_V
		}{
		\prod^k_{i=1} \|u_i\|_{ H_{-\delta_i} }
	}
	\right]
	< \infty, 
	\end{equation}
	and 
	\item
	\label{item:cor.mollified.transition.lip}
	it holds for all 
	$ k \in \{1,2,\ldots,n\} $, 
	$ \delta_1, \delta_2, \ldots, \delta_k \in [0,\nicefrac{1}{2})  $ 
	with 
	$ \sum^k_{i=1} \delta_i < \nicefrac{1}{2} $ 
	and 
	$
	|F|_{\operatorname{Lip}^k(H,H_{-\alpha})} +
	|B|_{\operatorname{Lip}^k(H,HS(U,H_{-\beta}))} + 
	|\varphi|_{\operatorname{Lip}^k(H,V)} < \infty 
	$ 
	that 
	\begin{equation}
	\begin{split}
	&\!\!
	\sup_{ \varepsilon, t \in (0,T] }
	\sup_{\substack{ v,w \in H, \\ v \neq w }}
	\sup_{ \mathbf{u}=(u_1,u_2,\ldots,u_k) \in (\nzspace{H})^k }
	\left[
	\frac{
		t^{ \sum^k_{i=1} \delta_i } \,
		\big\|
		\big[
		\big(
		\tfrac{\partial^k}{\partial x^k}
		\transmol{\varepsilon}
		\big)(t,v)
		-
		\big(
		\tfrac{\partial^k}{\partial x^k}
		\transmol{\varepsilon}
		\big)(t,w)
		\big]
		\mathbf{u}
		\big\|_V
	}{
	\|v-w\|_H
	\prod^k_{i=1} \|u_i\|_{ H_{-\delta_i} }
}
\right]
< \infty. 
\end{split}
\end{equation}
\end{enumerate}
\end{corollary}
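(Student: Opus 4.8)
The plan is to derive Corollary~\ref{thm:mollified.transition} directly from Lemma~\ref{lem:mollified.transition}, the only real work being to produce, for every $ \varepsilon \in (0,T] $, the whole family of higher order derivative processes demanded in the hypothesis of that lemma. To this end I would apply the existence and uniqueness statement in item~(i) of Theorem~2.1 in~\cite{AnderssonJentzenKurniawan2016a} to the \emph{smoothed} coefficient functions $ F_\varepsilon = e^{\varepsilon A} F $ and $ B_\varepsilon = ( H \ni x \mapsto ( U \ni u \mapsto e^{\varepsilon A} B(x) u \in H ) \in HS(U,H) ) $ and then translate the resulting stochastic evolution equations, which involve the unmollified convolution semigroup $ ( e^{tA} )_{t\in[0,T]} $, into the $\varepsilon$-shifted equations~\eqref{eq:SEE.derivative.mollified}.

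First I would record the elementary smoothing facts that for every $ \varepsilon \in (0,T] $ it holds that $ F_\varepsilon \in \Cb{n}(H,H) $, $ B_\varepsilon \in \Cb{n}(H,HS(U,H)) $ and that for all $ k \in \{1,2,\ldots,n\} $, $ x \in H $ it holds that $ F_\varepsilon^{(k)}(x) = e^{\varepsilon A} F^{(k)}(x) $ and $ B_\varepsilon^{(k)}(x) = e^{\varepsilon A} B^{(k)}(x) $ (composition of a $ \Cb{n} $-function with the bounded linear operator $ e^{\varepsilon A} \in L(H) $). Next, item~(i) of Theorem~2.1 in~\cite{AnderssonJentzenKurniawan2016a}, applied with $ F = F_\varepsilon $, $ B = B_\varepsilon $, $ \alpha = 0 $, $ \beta = 0 $, yields for every $ \varepsilon \in (0,T] $ up-to-modifications unique $ (\mathcal{F}_t)_{t\in[0,T]} $/$ \mathcal{B}(H) $-predictable stochastic processes $ X^{\varepsilon,k,\mathbf{u}} \colon [0,T] \times \Omega \to H $, $ \mathbf{u} \in H^{k+1} $, $ k \in \{0,1,\ldots,n\} $, which satisfy $ \sup_{s\in[0,T]} \E[\|X^{\varepsilon,k,\mathbf{u}}_s\|_H^p] < \infty $ for all $ p \in (0,\infty) $ together with the associated derivative equations (with the semigroup $ ( e^{tA} )_{t\in[0,T]} $ in the convolutions). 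Exploiting the identity $ e^{(t-s)A} e^{\varepsilon A} = e^{(t-s+\varepsilon)A} $ together with $ F_\varepsilon = e^{\varepsilon A} F $, $ B_\varepsilon = e^{\varepsilon A} B $ and the formulas $ F_\varepsilon^{(\#_\varpi)} = e^{\varepsilon A} F^{(\#_\varpi)} $, $ B_\varepsilon^{(\#_\varpi)} = e^{\varepsilon A} B^{(\#_\varpi)} $ recorded above, I would rewrite these equations to see that the processes $ X^{\varepsilon,k,\mathbf{u}} $ in fact satisfy~\eqref{eq:SEE.derivative.mollified}, so that we are precisely in the setting of Lemma~\ref{lem:mollified.transition}.

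Having the processes available, I would set $ X^{\varepsilon,x} := X^{\varepsilon,0,x} $ for $ \varepsilon \in (0,T] $, $ x \in H $. Since $ \Pi_0 = \emptyset $ and $ \mathbbm{1}_{\{0,1\}}(0) = \mathbbm{1}_{\{0\}}(0) = 1 $, specialising~\eqref{eq:SEE.derivative.mollified} to $ k = 0 $ reduces it to the integral equation displayed in item~\eqref{item:cor.mollified.existence}, while uniqueness of $ X^{\varepsilon,x} $ is inherited from Theorem~2.1 in~\cite{AnderssonJentzenKurniawan2016a}; this proves item~\eqref{item:cor.mollified.existence}. For item~\eqref{item:cor.mollified.TPS} I would define $ \transmol{\varepsilon}(t,x) = \E[\varphi(X^{\varepsilon,x}_t)] $ and note that $ \varphi \in \Cb{n}(H,V) $ has at most linear growth (its first derivative being globally bounded) and that $ \sup_{s\in[0,T]} \E[\|X^{\varepsilon,x}_s\|_H] < \infty $, so the expectation is well defined, which also gives uniqueness. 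Finally, since $ X^{\varepsilon,0,x} $ coincides with $ X^{\varepsilon,x} $, items~\eqref{item:cor.mollified.transition.derivative}, \eqref{item:cor.mollified.transition.bound}, and~\eqref{item:cor.mollified.transition.lip} follow immediately from items~\eqref{item:mollified.transition.derivative}, \eqref{item:mollified.transition.bound}, and~\eqref{item:mollified.transition.lip} of Lemma~\ref{lem:mollified.transition} (applied with the processes $ X^{\varepsilon,k,\mathbf{u}} $ and the functions $ \transmol{\varepsilon} $ just constructed). The only genuinely non-routine point is the identification carried out in the second paragraph, namely that the solution processes produced by Theorem~2.1 in~\cite{AnderssonJentzenKurniawan2016a} for the smoothed coefficients $ F_\varepsilon $, $ B_\varepsilon $ are \emph{exactly} the processes solving the $\varepsilon$-shifted equations~\eqref{eq:SEE.derivative.mollified}; once this is settled, the remaining assertions are a direct transcription of Lemma~\ref{lem:mollified.transition}.
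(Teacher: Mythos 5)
Your proposal is correct and follows essentially the same route as the paper: the paper likewise obtains items~(\ref{item:cor.mollified.existence})--(\ref{item:cor.mollified.TPS}) from item~(i) of Theorem~2.1 in~\cite{AnderssonJentzenKurniawan2016a} (together with item~(i) of Corollary~2.10 in~\cite{AnderssonJentzenKurniawan2016arXiv}) and then reads off items~(\ref{item:cor.mollified.transition.derivative})--(\ref{item:cor.mollified.transition.lip}) from Lemma~\ref{lem:mollified.transition}. The identification you carry out explicitly --- that the solutions for the smoothed coefficients $F_\varepsilon=e^{\varepsilon A}F$, $B_\varepsilon=e^{\varepsilon A}B$ satisfy the $\varepsilon$-shifted equations~\eqref{eq:SEE.derivative.mollified} via $e^{(t-s)A}e^{\varepsilon A}=e^{(t-s+\varepsilon)A}$ --- is exactly the mechanism the paper uses inside the proof of Lemma~\ref{lem:mollified.transition}, so your write-up is simply a more detailed version of the same argument.
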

\begin{proof}
Note that items~\eqref{item:cor.mollified.existence} and~\eqref{item:cor.mollified.TPS} follow immediately from item~(i) of Theorem~2.1 in~\cite{AnderssonJentzenKurniawan2016a} 
(with 
$ T = T $, 
$ \eta = \eta $, 
$ H = H $, 
$ U = U $, 
$ W = W $, 
$ A = A $, 
$ n = n $, 
$ F = F $, 
$ B = B $, 
$ \alpha = 0 $, 
$ \beta = 0 $
in the notation of Theorem~2.1 
in~\cite{AnderssonJentzenKurniawan2016a}) 
and item~(i) of Corollary~2.10 in~\cite{AnderssonJentzenKurniawan2016arXiv}.
Moreover, observe that items~\eqref{item:cor.mollified.transition.derivative}--\eqref{item:cor.mollified.transition.lip} follow directly from items~\eqref{item:mollified.transition.derivative}--\eqref{item:mollified.transition.lip} of Lemma~\ref{lem:mollified.transition}. 
The proof of Corollary~\ref{thm:mollified.transition} is thus completed.
\end{proof}

\section*{Acknowledgements}

Stig Larsson and Christoph Schwab are gratefully acknowledged for some useful comments.
This project has been supported through the SNSF-Research project 200021\_156603 
"Numerical approximations of nonlinear stochastic ordinary and partial differential equations".

\bibliographystyle{acm}
\bibliography{Bib/bibfile}
\end{document}